\newtheorem{theorem}{Theorem}
\newtheorem{definition}{Definition}[section]
\newtheorem{lemma}{Lemma}[section]
\newtheorem{corollary}[theorem]{Corollary}
\newtheorem{observation}{Observation}[section]
\newtheorem{proposition}[theorem]{Proposition}
\newtheorem{conjecture}[theorem]{Conjecture}
\newtheorem{remark}{Remark}[section]
\newcommand{\revvec}[1]{\accentset{\leftarrow}{#1}}
\newcommand{\dtw}{{\rm dtw}}
\newcommand{\bivec}[1]{\accentset{\leftrightarrow}{#1}}
\author
{
Raphael Steiner \thanks{Institute of Mathematics, Technische Universit\"at Berlin, Germany, email: \texttt{steiner@math.tu-berlin.de}.
Funded by DFG-GRK 2434 Facets of Complexity.}
}
\date{\today}
\title{Disjoint cycles with length constraints in digraphs of large connectivity or large minimum degree}
\begin{document}
\maketitle

\begin{abstract}
A conjecture by Lichiardopol~\cite{lich} states that for every $k \ge 1$ there exists an integer $g(k)$ such that every digraph of minimum out-degree at least $g(k)$ contains $k$ vertex-disjoint directed cycles of pairwise distinct lengths.

Motivated by Lichiardopol's conjecture, we study the existence of vertex-disjoint directed cycles satisfying length constraints in digraphs of large connectivity or large minimum degree.

Our main result is that for every $k \in \mathbb{N}$, there exists $s(k) \in \mathbb{N}$ such that every strongly $s(k)$-connected digraph contains $k$ vertex-disjoint directed cycles of pairwise distinct lengths.

In contrast, for every $k \in \mathbb{N}$ we construct a strongly $k$-connected digraph containing no two vertex- or arc-disjoint directed cycles of the same length. 

It is an open problem whether $g(3)$ exists. Here we prove the existence of an integer $K$ such that every digraph of minimum out- and in-degree at least $K$ contains $3$ vertex-disjoint directed cycles of pairwise distinct lengths. 
\end{abstract}

\section{Introduction}
Cycles are amongst the most fundamental graph objects, and a large body of work has dealt with understanding which conditions force a graph to contain cycles of certain types and lengths. In most cases, the conditions require the graphs to be sufficiently dense, which usually means that they should have either large minimum degrees or should be highly connected. 

The problem of packing vertex-disjoint cycles in undirected graphs of large minimum degree has received a lot of attention, see~\cite{chiba} for a survey on this topic. For example, for every $k \in \mathbb{N}$ an undirected graph whose minimum degree is sufficiently large in terms of $k$ contains
\begin{itemize}
\item $k$ vertex-disjoint cycles of equal lengths~\cite{alon, egawa, haggkvist}
\item $k$ vertex-disjoint cycles of pairwise distinct lengths~\cite{bensmail}
\item $k$ vertex-disjoint cycles of even lengths~\cite{chiba2}
\end{itemize}

In each case, the precise degree bounds required to obtain $k$ disjoint cycles with these properties are known.

In contrast, the analogous problems for directed graphs in large amounts remain open. In 1981, Bermond and Thomassen~\cite{bermondthomassen} stated the following conjecture.

\begin{conjecture}\label{con:bermondthomassen}
For every $k \in \mathbb{N}$, every digraph $D$ with $\delta^+(D) \ge 2k-1$ contains $k$ vertex-disjoint directed cycles.
\end{conjecture}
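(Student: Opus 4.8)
The statement (Conjecture~\ref{con:bermondthomassen}) is the Bermond--Thomassen conjecture, open since 1981; below I describe the line of attack I would pursue in order to obtain the tight bound $\delta^+(D)\ge 2k-1$, rather than a constant factor worse. The plan is induction on $k$, with base cases $k\le 3$ supplied by Thomassen (for $k=2$) and Lichiardopol, P\'or and Sereni (for $k=3$). In the inductive step we take $D$ with $\delta^+(D)\ge 2k-1$ and try to remove one directed cycle $C$ so that the residual digraph $D-V(C)$ still meets the hypothesis for $k-1$, i.e.\ $\delta^+(D-V(C))\ge 2(k-1)-1=2k-3$. Since deleting a vertex set $X$ lowers every out-degree by at most $|X|$, this works immediately if $D$ has a directed digon, and more generally whenever $D$ has a directed cycle $C$ such that every vertex outside $C$ has at most two out-neighbours on $C$. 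Thus the crux is:

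\textbf{Key Lemma (to be proved).} Every digraph $D$ with $\delta^+(D)\ge 2k-1$ that does not contain $k$ vertex-disjoint directed cycles has a directed cycle $C$ with $\delta^+(D-V(C))\ge 2k-3$.

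To attack the Key Lemma I would first reduce to a counterexample $D$ minimal in $|V(D)|$: such a $D$ has no vertex of in-degree $0$ (it would lie on no cycle, so its deletion yields a smaller counterexample) and, by replacing $D$ with a terminal strong component, may be taken strongly connected. Inside such a $D$ one studies an extremal directed cycle $C$ — shortest, say, or one minimizing $\max_{v\notin V(C)}|N^+(v)\cap V(C)|$ — and analyses the failure configuration: if the Key Lemma fails then for \emph{every} directed cycle $C$ there is a vertex $v_C\notin V(C)$ with at least three out-neighbours on $C$. Using $v_C$ and three of its out-neighbours along $C$ one obtains several chords splitting $C$ into short arcs, and the aim is to reroute $C$ into a strictly shorter cycle, or into two vertex-disjoint cycles (which would feed an inner induction on cycle length or on the number of cycles already found). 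Making this rerouting terminate is exactly a Caccetta--H\"aggkvist-flavoured statement about the second out-neighbourhood of $v_C$, so control of the digirth in terms of $\delta^+$ is the quantitative engine one needs.

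The main obstacle is precisely that a digraph with $\delta^+(D)\ge 2k-1$ need contain no short directed cycle, and no cycle interacting sparsely with its complement, so the naive accounting ``remove one cycle, lose at most $2$ from every out-degree'' cannot literally be executed; this is why all unconditional results to date (Alon's $\delta^+\ge 64k$ and its subsequent improvements) lose a constant factor strictly larger than $2$ at exactly this point. Reaching the conjectured $2k-1$ appears to demand genuinely new input: either a partial resolution of Caccetta--H\"aggkvist in the pertinent density range, or a global mechanism that extracts many vertex-disjoint short cycles at once — for instance through a carefully chosen out-$1$-regular spanning subdigraph, whose cycle components are automatically vertex-disjoint — rather than peeling cycles off one at a time. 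I would therefore regard a successful proof of the Key Lemma, or the replacement of the one-cycle-at-a-time induction by such a global argument, as the decisive step, and the realistic expectation is that the strategy first yields the tight bound for a few small additional values of $k$ (e.g.\ $k=4,5$) before the general case.
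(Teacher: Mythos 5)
The statement you were asked about is Conjecture~\ref{con:bermondthomassen} in the paper --- the Bermond--Thomassen conjecture --- and the paper does not prove it; it is cited as open background motivating the author's results on cycles of \emph{distinct} lengths. So there is no paper proof to compare against, and your submission correctly recognizes this: what you give is an honest research sketch, not a proof, and you say so explicitly.

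Within those terms your sketch is accurate and well-informed. The base cases you cite are right ($k=1$ trivial, $k=2$ by Thomassen, $k=3$ by Lichiardopol, P\'or and Sereni), the naive peeling strategy and your ``Key Lemma'' are the natural first attempt, and you correctly diagnose why it fails: after deleting a cycle $C$, vertices outside $C$ may lose far more than two out-neighbours, and large minimum out-degree forces neither short cycles nor cycles that interact sparsely with their complement. Your remark that the known unconditional bounds (Thomassen's $(k+1)!$, Alon's $64k$, Buci\'c's $18k$) all lose a constant factor at exactly this point is consistent with the literature the paper surveys. The one thing to be careful about is framing: since the task asked for a proof of the statement and you (rightly) cannot supply one, the ``Key Lemma'' should be flagged even more starkly as the entire unresolved content of the conjecture --- as stated, it is essentially equivalent to the conjecture itself once one has a minimal counterexample, so proposing to ``prove'' it is not a reduction. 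Your closing suggestion of a global mechanism (an out-$1$-regular spanning subdigraph whose cycle components are automatically disjoint) is a genuinely different idea worth separating out, though one should note that a random out-$1$-regular subdigraph typically has only $O(\log n)$ cycle components, so on its own it does not obviously scale with $k$.
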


Thomassen~\cite{thom83} was the first to prove that for every $k$ there exists a finite number $f(k)$ such that all digraphs of minimum out-degree $f(k)$ contain $k$ disjoint directed cycles, and he showed that $f(k) \le (k+1)!$. This estimate was improved by Alon~\cite{alon} to $f(k) \le 64k$, and then further by Buci\'{c}~\cite{bucic} to $f(k) \le 18k$, which remains the state of the art.

Thomassen~\cite{thom83} also conjectured that his result may be strengthened in the sense that digraphs of sufficiently large minimum out-degree as a function of $k$ always contain $k$ disjoint directed cycles of equal lengths.
This conjecture was disproved by Alon~\cite{alon},
who gave a construction of digraphs of arbitrarily large minimum out-degree containing no two arc-disjoint directed cycles of equal lengths. 
On the other hand, Henning and Yeo~\cite{henningyeo} initiated the study of the question whether every digraph of sufficiently large minimum out-degree contains two disjoint directed cycles of \emph{distinct} lengths. This was resolved by Lichiardopol~\cite{lich}, who proved that every digraph $D$ with $\delta^+(D) \ge 4$ contains two vertex-disjoint directed cycles of distinct lengths. Lichiardopol conjectured the following far-reaching qualitative generalization of his result:

\begin{conjecture}[cf.~\cite{lich}]
For every $k \in \mathbb{N}$ there exists an integer $g(k) \in \mathbb{N}$ such that every digraph $D$ with $\delta^+(D) \ge g(k)$ contains $k$ vertex-disjoint directed cycles of pairwise distinct lengths.
\end{conjecture}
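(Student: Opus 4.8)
Lichiardopol's conjecture is a well-known open problem, so what follows is a plan of attack rather than a resolution. The natural point of departure is the main theorem of this paper, which already settles the statement under the stronger hypothesis of strong $s(k)$-connectivity; hence the entire task is to bridge from a lower bound on $\delta^+(D)$ to high strong connectivity. One step of this bridge is free: a digraph with $\delta^+(D)\ge g(k)$ contains a sink strong component $S$, and every out-neighbour of a vertex of $S$ lies again in $S$, so $\delta^+(S)\ge g(k)$ while $S$ is strongly connected. The cleanest way to conclude from here would be a \emph{directed analogue of Mader's subgraph-connectivity theorem}: for every $k$ there is a $g(k)$ such that every digraph with $\delta^+\ge g(k)$ contains a strongly $s(k)$-connected subdigraph, where $s(k)$ is the bound furnished by the main theorem. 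If this held, the conjecture would follow in one line and, crucially, with \emph{no} further control of cycle lengths required, since the main theorem already produces $k$ vertex-disjoint cycles of pairwise distinct lengths inside such a subdigraph. Whether the directed Mader statement is true appears to be open — indeed it cannot already be a known theorem, since together with the main theorem it would resolve the conjecture — and it may well fail, as, unlike in the undirected setting, a large minimum out-degree licenses no local-to-global averaging. I regard establishing or circumventing it as the principal obstacle.

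Failing such a reduction, I would run a \emph{``highly connected core, or sparse separation''} dichotomy, with an outer induction on $k$ and an inner induction on $|V(D)|$. Either the strongly connected digraph under consideration contains a strongly $s(k)$-connected subdigraph — in which case the main theorem finishes the job — or it has a separator of bounded order $r=r(k)\le s(k)-1$, across which one would peel off the ``downstream'' side together with its separator, pass again to a sink strong component there, and recurse on a strictly smaller digraph, the minimum out-degree dropping by at most $r$ at each step. Arranging this recursion so that it terminates while the pieces still have minimum out-degree large enough to apply the inductive hypothesis is itself a nontrivial design problem — a naive recursion loses too much degree — and, since cycles are now harvested piece by piece, their \emph{lengths} must be tracked throughout to keep the final collection pairwise distinct.

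This length bookkeeping is the real crux, and it is subtle precisely because a large minimum out-degree forces \emph{neither} a short cycle \emph{nor} a bounded number of distinct cycle lengths: the circulant digraph on $\mathbb{Z}/n\mathbb{Z}$ with arcs $i\to i+j$ for $1\le j\le d$ has $\delta^+=d$ but girth $\lceil n/d\rceil$, which grows with $n$. Hence one cannot first pack $k$ vertex-disjoint cycles via the bounds of Thomassen~\cite{thom83}, Alon~\cite{alon} or Buci\'c~\cite{bucic} and then hope their lengths come out distinct, nor can one delete an already chosen short cycle and keep $\delta^+$ large. Instead the induction must be set up so that, after cycles of distinct lengths $\ell_1<\dots<\ell_{k-1}$ have been fixed, the residual digraph still has large $\delta^+$ and provably contains a cycle whose length avoids the finite set $\{\ell_1,\dots,\ell_{k-1}\}$ and which is disjoint from the earlier cycles. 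The ``fresh length'' half calls for a quantitative strengthening of Lichiardopol's two-cycle theorem~\cite{lich} — namely that a digraph of large minimum out-degree contains cycles of arbitrarily many distinct lengths — which I expect to be true and which would in any case be a consequence of the directed Mader statement. The ``disjointness'' half is the hard one: the fresh-length cycle may be long, so it cannot simply be rerouted around a bounded forbidden set, and making length control coexist with vertex-disjointness when no short cycle is guaranteed is where I expect essentially all of the difficulty to concentrate.
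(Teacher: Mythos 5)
The statement you were given is Lichiardopol's conjecture itself, which the paper lists as an open conjecture and does \emph{not} prove; its results (Theorem~\ref{thm:mainconn}, Theorem~\ref{thm:mainsem}, Proposition~\ref{prop:boundedwidth}, Proposition~\ref{prop:notwoofsamelength}) are partial results under added hypotheses, not a resolution. So there is no proof in the paper to compare against, and you are right to frame your text as a plan of attack rather than a proof.

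That said, the central pillar of your first route is provably false, and the refutation is already contained in the paper's own preliminaries. You propose a ``directed Mader'' statement: that sufficiently large minimum out-degree forces a strongly $s(k)$-connected subdigraph. But Remark~\ref{rem:largedegreewidth1} constructs, for every $k$, a digraph $F_k$ with $\delta^+(F_k)=k$ and directed tree-width $1$. Directed tree-width is monotone under taking subdigraphs, and by Corollary~\ref{cor:highconnhighwidth} every strongly $j$-connected digraph has directed tree-width at least $j$; hence no subdigraph of $F_k$ is even strongly $2$-connected, for any $k$. So your directed Mader statement fails already at the level of $2$-connectivity, and the clean reduction you hoped for cannot exist. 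The same family undercuts your fallback dichotomy: in $F_k$ one never encounters a highly connected core, so the recursion peels off small separators indefinitely and your inner induction never reaches a base case handled by Theorem~\ref{thm:mainconn}. Your opening observation that one may pass to a sink strong component while preserving $\delta^+$ is correct and standard, and your diagnosis that large $\delta^+$ guarantees neither a short cycle nor a bounded palette of cycle lengths is accurate. But the genuinely hard regime the $F_k$ example exposes --- digraphs of arbitrarily large minimum out-degree yet tiny directed tree-width, hence with no well-connected substructure to exploit --- is precisely what Proposition~\ref{prop:boundedwidth} handles only under the extra assumption of \emph{bounded} directed tree-width; removing that assumption is the open problem, and your plan, as written, offers no new purchase on it.
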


Remarkably, Lichiardopol's conjecture remains open even for $k=3$. In addition to Lichiardopol's proof of the existence of $g(2)$, some work has been done on forcing \emph{two} vertex-disjoint directed cycles of distinct lengths in more special classes of digraphs, see~\cite{gao, tan2014, tan2015, tan2017, tan2020} for some results on this topic. Bensmail, Harutyunyan, Le, Li and Lichiardopol~\cite{bensmail} took up Lichiardopol's conjecture and proved it in some special cases, namely for tournaments (orientations of complete graphs), for regular digraphs (all vertices have out- and in-degree exactly $r$ for some number $r \in \mathbb{N}$) and for digraphs of smaller order. In the latter two cases, their proof is probabilstic and constructs a partition of the digraph into $k$ vertex-disjoint subdigraphs with still large minimum out-degree. Whether or not such a partition exists for general digraphs is a fundamental open problem posed by several researchers, see for example~\cite{alon2}.
It therefore seems difficult to extend the proof methods of Bensmail et al.~to general digraphs, concretely since their proofs depend on (1) the inherent density property of tournaments or (2) the assumption that the digraph is \emph{balanced}, i.e.~that its maximum in-degree is upper-bounded by a function of its minimum out-degree, a requirement for the application of the Lov\'{a}sz Local Lemma in their probabilistic arguments. 

\paragraph{Our results.}
A natural strengthening of the condition of having large minimum degree is to require large connectivity. The main result of this paper verifies Lichiardopol's conjecture for digraphs of large (strong) connectivity. This is the first result guaranteeing arbitrarily many vertex-disjoint directed cycles of distinct lengths in digraphs which is applicable to general digraphs and does not require upper bounds on the maximum degrees.
\begin{theorem}\label{thm:mainconn}
For every $k \ge 1$ there exists an integer $s(k)$ such that every strongly $s(k)$-connected digraph contains $k$ vertex-disjoint directed cycles of pairwise distinct lengths. 
\end{theorem}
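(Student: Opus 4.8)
The plan is to run an induction on $k$, where at each step we either find a short cycle that we can remove and recurse, or we extract a highly-connected ``core'' on which the cycles we have already built can be made short relative to the available supply. Concretely, suppose we want $k$ vertex-disjoint directed cycles of pairwise distinct lengths, and assume strong connectivity at least $s(k)$ for a function $s$ to be determined (growing fast, e.g. super-exponentially in $k$). The key quantitative tool should be a Menger-type statement: in a strongly $t$-connected digraph, between any two vertices $u,v$ there are $t$ internally disjoint $u\to v$ dipaths, and more importantly for us, through any prescribed vertex there pass directed cycles whose lengths occupy a long interval (or at least a large set) of integers. I would first establish such a lemma: \emph{in a strongly $t$-connected digraph there is a vertex $v$ and a set $L$ of at least $h(t)$ distinct integers such that for each $\ell \in L$ there is a directed cycle of length $\ell$ through $v$}, with $h(t)\to\infty$. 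This follows by taking a vertex $v$, a BFS-type out-arborescence and in-arborescence rooted at $v$ in the $t$-connected digraph, and combining an out-branch with an in-branch that reach a common vertex at controllably different depths — strong $t$-connectivity lets us reroute to adjust lengths.

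Next, the main idea: build the $k$ cycles one at a time, \emph{smallest first}, always keeping a large strongly-connected remainder. Having found $i$ vertex-disjoint cycles $C_1,\dots,C_i$ of distinct lengths using a bounded number $N_i$ of vertices, I want the digraph $D - V(C_1\cup\cdots\cup C_i)$ to still be strongly $s(k-i)$-connected; this is why $s$ must be chosen so that $s(k) \ge s(k-1) + (\text{max number of vertices a single new cycle can cost})$, and the cycle we add at each step must therefore be bounded in length by a function of $k$ only. So the real engine is a lemma guaranteeing a \emph{short} cycle avoiding a prescribed bounded vertex set, of a length we can \emph{choose} from a large menu so as to differ from the (at most $k-1$) already-used lengths. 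Using the interval-of-cycle-lengths lemma above applied inside $D - V(C_1\cup\cdots\cup C_i)$ (still highly connected), we get $\ge h(s(k-i))$ available lengths through some vertex; discarding the $\le k-1$ forbidden lengths and the ones that are too long still leaves one usable length provided $h$ beats $k$ and provided short cycles dominate — here I would use that in a strongly $t$-connected digraph the girth-type parameter is small, indeed there is always a directed cycle of length $O(\text{something}/t)$ hitting any prescribed vertex, or simply that one can find $\Omega(t)$ distinct cycle lengths all bounded by a function of $t$.

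The hard part, I expect, is precisely this last point: controlling both the \emph{number} of distinct realizable cycle lengths and the \emph{upper bound} on those lengths simultaneously, while the cycles must avoid a growing (but bounded) forbidden vertex set. A digraph can be highly connected yet have all cycles through a given vertex long and of few distinct lengths (e.g.\ bidirected-graph-like behaviour forces parity-type restrictions). To get around this I would not insist the $i$-th cycle pass through a fixed vertex; instead I would argue that among \emph{all} vertices of the large connected remainder, either many distinct short cycle lengths already appear (done), or the remainder has strong structure (e.g. it is close to an orientation of a bipartite-like or highly regular object) which we can exploit directly, perhaps by invoking the known cases — but since we only need a \emph{qualitative} bound $s(k)$, a cleaner route is a Ramsey/compactness-style dichotomy: either a bounded-size highly-connected digraph already contains the $k$ desired cycles (a finite check absorbed into the constant), or the digraph contains a long ``subdivision-rich'' substructure (a large connected topological minor such as a blown-up path or cycle with many chords) in which cycles of a long run of consecutive lengths are manifest. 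The bulk of the technical work will be making the second alternative yield $\Theta(k)$ consecutive cycle lengths with an absolute length bound, and threading the induction so the connectivity budget $s(k)$ stays finite; once that lemma is in hand, the induction described above closes immediately.
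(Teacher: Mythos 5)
Your plan hinges on an inductive step in which each newly found cycle has length bounded by a function of $k$ alone, so that deleting its vertices costs only a bounded amount of connectivity and leaves a digraph that is still strongly $s(k-i)$-connected. This is exactly where the argument breaks. Strong $t$-connectivity does \emph{not} force short directed cycles, nor does it force any cycle through a chosen vertex to have length bounded in terms of $t$. Consider the ``blown-up directed cycle'' $B_{m,t}$: vertex classes $V_1,\dots,V_m$, each of size $t$, with all arcs from $V_i$ to $V_{i+1}$ (indices mod $m$). This digraph is strongly $t$-connected for every $m$, yet its directed girth is $m$ and every directed cycle has length a multiple of $m$. Taking $m$ arbitrarily large shows that a strongly $t$-connected digraph can have \emph{no} directed cycle of length below any prescribed bound, so your ``short cycle of a chosen length'' lemma is false, and the assertion that one can find ``$\Omega(t)$ distinct cycle lengths all bounded by a function of $t$'' fails as well. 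Consequently the connectivity budget $s(k)\geq s(k-1)+(\text{vertices removed})$ cannot be made to close: there is no a priori bound on the number of vertices you remove at each step, so you cannot guarantee that $D-V(C_1\cup\cdots\cup C_i)$ is still highly strongly connected.

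The secondary part of your proposal, the ``Ramsey/compactness-style dichotomy'' between a bounded-size core and a subdivision-rich substructure, is the right instinct but is left entirely unsubstantiated, and supplying it is where all the work lies. The paper resolves this precisely by invoking a heavy structural result, the Directed Flat Wall Theorem of Giannopoulou, Kawarabayashi, Kreutzer and Kwon: a strongly $s(k)$-connected digraph has large directed tree-width (via havens), hence either contains a large complete butterfly-minor (handled by an arc-weighting argument producing cycles of distinct weights) or a weakly flat wall after deleting a bounded apex set (handled by a delicate local analysis near the wall producing vertex-disjoint ``$k$-trains'', each carrying $k$ distinct cycle lengths). Your proposal never reaches a lemma that would play either of these roles, and the Menger/BFS rerouting heuristic you sketch does not give the needed quantitative control over cycle lengths. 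As it stands, the argument has an irreparable gap at its core inductive step and a large unproven structural dichotomy, so it does not constitute a proof.
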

The relationship between minimum degree, strong connectivity and the existence of directed cycles of particular lengths in digraphs has received attention previously. For instance, a long-standing open problem posed by Lov\'{a}sz~\cite{lovasz} asked whether (1) every digraph of sufficiently large minimum out-degree contains an even directed cycle, and (2) whether every digraph of sufficiently large strong connectivity contains an even directed cycle. Both questions were resolved by Thomassen in~\cite{thom83} and~\cite{thom92}, answering (1) in the negative and (2) in the positive.

With Theorem~\ref{thm:mainconn} at hand, it is natural to ask whether a sufficiently high degree of strong connectivity also guarantees the existence of many vertex-disjoint directed cycles of equal length. 
We can show that the answer to this question is, maybe surprisingly, negative\footnote{Note that Proposition~\ref{prop:notwoofsamelength} is not implied by Alon's construction of digraphs having large minimum out-degree without two disjoint cycles of equal length, since the digraphs constructed in~\cite{alon} contain vertices of in-degree $1$, and hence are not strongly $2$-connected.}.
\begin{proposition}\label{prop:notwoofsamelength}
For every $k \in \mathbb{N}$ there exists a strongly $k$-connected digraph $D_k$ which contains no two arc-disjoint directed cycles (and hence no two vertex-disjoint cycles) of equal length.
\end{proposition}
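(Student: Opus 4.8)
\emph{The plan} is to build $D_k$ as a \emph{layered digraph with back arcs}, set up so that the length of every directed cycle is determined by which back arcs it uses.

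\emph{The construction.} Fix the layer‑width $W:=k$, choose $N=N(k)$ large (a suitable polynomial in $k$ will do), and put $M:=2^{N+1}$. Take disjoint layers $L_0,L_1,\dots,L_M$, each of size $W$, and insert all $W^2$ \emph{forward arcs} from $L_i$ to $L_{i+1}$ for $0\le i<M$. Then add $N$ \emph{back arcs} $\beta_1,\dots,\beta_N$, where $\beta_j$ runs from a vertex of $L_{s_j}$ to a vertex of $L_{t_j}$ with $s_j>t_j$ and the \emph{drop} $D_j:=s_j-t_j$ equals $2^{N+1}-2^{j-1}$ (so all drops lie in $[M/2,\,M-1]$). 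The layer‑positions $s_j$ and the endpoint vertices are still free (subject only to $s_j\ge D_j$), and I would pin them down so that (a) every vertex of $L_M$ is the tail of at least $k$ back arcs, (b) every vertex of $L_0$ is the head of at least $k$ back arcs, and (c) $D_k$ is strongly $k$‑connected; (c) is what forces $N$, hence $M$, to be large.

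\emph{The cycle‑length property (the easy half).} Assign each vertex the potential equal to its layer index; every forward arc raises the potential by $1$, every back arc lowers it by its drop, and $D_k$ has no other arcs. For any directed cycle $C$ the potential changes sum to $0$, so $C$ uses exactly $\sum_{\beta_j\in C}D_j$ forward arcs, and since $C$ must use a back arc its back‑arc set $B(C)$ is nonempty with
\[
|C|\;=\;\sum_{\beta_j\in B(C)}D_j+|B(C)|\;=\;\sum_{\beta_j\in B(C)}(D_j+1).
\]
The values $D_j+1$ have pairwise distinct subset sums: $\sum_{j\in S}(D_j+1)=\sum_{j\in T}(D_j+1)$ rearranges to $(|S|-|T|)(M+1)=\sum_{j\in S}2^{j-1}-\sum_{j\in T}2^{j-1}$, whose right side has absolute value below $2^N<M+1$, forcing $|S|=|T|$ and then $S=T$. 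Hence $|C|$ determines $B(C)$; so two directed cycles of equal length have the same (nonempty) set of back arcs, share an arc, and in particular are not arc‑disjoint — which is Proposition~\ref{prop:notwoofsamelength}.

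\emph{Strong $k$‑connectivity (the hard half).} Minimum in‑ and out‑degree $\ge k$ is immediate: interior vertices inherit it from the width‑$k$ complete bipartite forward layers, and $L_0,L_M$ from (a),(b). For genuine $k$‑connectivity I would check Menger's condition, i.e.\ $k$ internally disjoint directed paths between every ordered pair. For a pair in layers $L_p,L_q$ with $p<q$ this is routine — route $k$ internally disjoint paths upward through the intermediate layers. The case $p\ge q$ is the crux: a path then has to descend past the gap using back arcs, and here the point of making every drop at least $M/2$ is that \emph{two} back arcs already suffice for one such path (leave the high end along a back arc, climb forward up to the tail of a second back arc whose head lies at level $\le q$ — feasible since the two drops sum to at least $M$). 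So $O(k)$ well‑placed back arcs give $k$ internally disjoint ``return routes'' for any fixed pair, and with $N=\Theta(kW)$ back arcs whose positions make the needed first/second back arcs available everywhere, Menger's condition can be verified. The main obstacle I expect is exactly this bookkeeping — choosing the positions $s_j$ (and endpoints) compatibly with (a), (b) and the ``$k$ disjoint return routes everywhere'' requirement — but the enormous slack in $M$ relative to $k$ makes it manageable, and since the drops $D_j$ are fixed up front, no choice of positions can disturb the distinct‑subset‑sums argument.
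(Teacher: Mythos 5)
Your construction is, up to reversing all arcs, essentially the one in the paper: a long chain of layers of width $k$ with complete bipartite arcs between consecutive layers, together with a small collection of ``jump'' arcs whose lengths are $N$ plus distinct powers of two so that all subset sums are distinct; the identity $|C|=\sum_{\beta_j\in B(C)}(D_j+1)$ and the subset-sum argument are exactly right. Where you wisely flag trouble -- the ``bookkeeping'' for connectivity -- the paper removes the difficulty by a sharper design choice rather than more bookkeeping: it places \emph{all} jump arcs so that (in your orientation) every back arc has its tail in $L_M$ \emph{or} its head in $L_0$, $k$ back arcs leaving each vertex of $L_M$ and $k$ entering each vertex of $L_0$, and nothing else. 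With that choice you do not need Menger or any explicit ``return route'' routing at all: it suffices to show $D_k-S$ is strongly connected for every $|S|<k$, which is the definition the paper uses. Then every layer retains a vertex (width $k$), forward connectivity through surviving vertices is automatic, and the only thing left to check is that some surviving vertex $u\in L_M$ reaches some surviving $v\in L_0$; this is immediate because $u$ has $k$ back arcs with distinct heads (so one head survives) and $v$ has $k$ back arcs with distinct tails (so one tail survives), with the two surviving endpoints joined by forward arcs since both drops exceed $M/2$. Your version, which allows back arcs with endpoints scattered through the interior and then tries to exhibit $k$ internally disjoint return routes, is doing strictly more work than the deleted-vertex criterion requires, and the unfinished step -- pinning down $N$ and the positions $s_j$ so that all your ``return routes'' exist and avoid any $<k$ deleted vertices simultaneously -- is a genuine gap in the write-up even if it is morally fixable. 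The paper's choice of $2k^2$ jump arcs with lengths $N+2^0,\dots,N+2^{2k^2-1}$ and $N=4^{k^2}$ closes it with no freedom left to chase.
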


Our next result makes progress on the question concerning the existence of $g(3)$, guaranteeing three vertex-disjoint directed cycles of distinct lengths in digraphs of large out- \emph{and} in-degree.

\begin{theorem}\label{thm:mainsem}
There exists an integer $K$ such that every digraph $D$ with $\delta^+(D), \delta^-(D) \ge K$ contains three vertex-disjoint directed cycles of pairwise distinct lengths.
\end{theorem}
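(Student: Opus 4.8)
The plan is to bootstrap from Lichiardopol's result on two disjoint cycles of distinct lengths, using the large in-degree assumption to "grow" a third cycle of a new length. First I would invoke Lichiardopol's theorem: since $\delta^+(D) \ge K \ge 4$, the digraph $D$ contains two vertex-disjoint directed cycles $C_1, C_2$ with $|C_1| \ne |C_2|$. Among all such pairs, choose one minimizing $|C_1| + |C_2|$ (or some similar extremal choice), so that $C_1$ and $C_2$ are "short" — ideally one wants a bound like $|C_1|, |C_2| \le h(K)$ for some function $h$, which should follow by a standard argument: inside $D - C_2$ the out-degree drops by at most $|C_2|$, but one can instead argue that a shortest cycle through appropriate vertices is short, using that every vertex still has many out-neighbours. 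The point of keeping $C_1 \cup C_2$ small is that the remaining digraph $D' = D - (C_1 \cup C_2)$ still satisfies $\delta^+(D'), \delta^-(D') \ge K - |C_1| - |C_2| \ge K - 2h(K)$, which we keep large by choosing $K$ large relative to $h$.

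The core of the argument is then to find, in $D'$ or in $D$ with controlled interaction with $C_1 \cup C_2$, a third disjoint cycle $C_3$ whose length avoids both $|C_1|$ and $|C_2|$. Here I would exploit the in-degree condition, which is the feature distinguishing Theorem~\ref{thm:mainsem} from the still-open $g(3)$: in a digraph of large minimum out-degree \emph{and} in-degree one has much more control over cycle lengths. Concretely, I expect the key lemma to be something like: a digraph with $\delta^+, \delta^- \ge m$ contains directed cycles of many different lengths, or a directed cycle of every length in some interval $[\ell, \ell']$ with $\ell' - \ell$ large, or at least cycles of at least three distinct lengths in a prescribed short range. Given such a "many lengths" lemma applied to $D'$ (where out- and in-degrees are both still $\ge K - 2h(K)$), we obtain in $D'$ a cycle $C_3$ disjoint from $C_1$ and $C_2$ whose length differs from both $|C_1|$ and $|C_2|$ — there are only two forbidden values, so it suffices that $D'$ realizes at least three cycle lengths. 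Combining $C_1, C_2, C_3$ finishes the proof.

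The main obstacle I anticipate is establishing the "many cycle lengths" lemma for digraphs of large minimum out- and in-degree, i.e.\ that such a digraph contains directed cycles of at least (say) three pairwise distinct lengths with \emph{absolute} control so that these lengths, and the cycles' location, do not conflict with the already-chosen $C_1, C_2$. Large minimum out-degree alone gives cycles but not much control over their lengths (indeed $g(3)$ is open precisely because of this); the in-degree hypothesis must be used in an essential way — for instance via a BFS/levelling argument from a vertex, where large out-degree forces the levels to grow and large in-degree forces back-arcs spanning many different level-gaps, yielding cycles of many distinct lengths. Making this robust to the deletion of the bounded set $C_1 \cup C_2$, and handling the bookkeeping so that only a constant (namely $K$) number of degree units are ever consumed, is where the real work lies; everything else is extremal-choice bookkeeping.
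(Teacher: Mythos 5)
Your approach diverges entirely from the paper's, which goes through the Directed Flat Wall Theorem and the machinery of $k$-trains; that in itself would be fine, but your plan has a gap at its very first step. You want to choose two disjoint cycles $C_1,C_2$ of distinct lengths with $|C_1|+|C_2|\le 2h(K)$ for some function $h$ of $K$ alone, so that after deleting $C_1\cup C_2$ the digraph $D'$ retains $\delta^+(D'),\delta^-(D')\ge K-2h(K)$. No such bound exists. Take the blow-up of a directed $n$-cycle: partition $V$ into $V_1,\dots,V_n$ with $|V_i|=K$ and put every arc from $V_i$ to $V_{i+1\pmod n}$. Then $\delta^+=\delta^-=K$, but every directed cycle has length a multiple of $n$, so \emph{every} cycle has length at least $n$, which is unbounded in terms of $K$. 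Thus there is no function $h(K)$ bounding the shortest cycle, let alone the shortest pair, and the claim that ``the out-degree drops by at most $|C_2|$'' in a useful way is vacuous once $|C_2|$ can dwarf $K$. This also kills the ``many lengths'' fallback: although it is true (and easy, via the $k$-train observation) that $\delta^+\ge 3$ already gives cycles of three distinct lengths, you need those to live in $D-(C_1\cup C_2)$, which may have arbitrarily small, even zero, minimum degree after removing two long cycles. Note also that in the blow-up example the theorem is still true (one can take nested cycles of lengths $n$, $2n$, $3n$ using disjoint vertices in each block), but your sequential delete-and-recurse strategy cannot find them.

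What the paper does instead is apply the Directed Flat Wall Theorem to get a trichotomy: either the directed tree-width is bounded (handled via an Erd\H{o}s--P\'osa-type argument for $k$-trains, Proposition~\ref{prop:boundedwidth}), or there is a large complete butterfly-minor (handled via the weighted $\bivec{K}_t$ lemma, Corollary~\ref{cor:kt}), or there is a weakly flat wall (handled by routing a $k$-train and a reverse-$k$-train into disjoint strips of the wall, Lemma~\ref{lemma:nonstrongcase}, where the in-degree hypothesis is used to run the same construction backward). The in-degree assumption enters in precisely and only that last case; it is genuinely more delicate than the local growth argument you describe. If you want a route that avoids the flat wall theorem you would at minimum need a replacement for the ``short disjoint cycles'' step, and the blow-up example shows that step cannot be salvaged as stated.
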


Our proofs for Theorem~\ref{thm:mainconn} and Theorem~\ref{thm:mainsem} make use of the \emph{Directed Flat Wall Theorem}, a tool from structural digraph theory established recently by Giannopoulou, Kawarabayashi, Kreutzer and Kwon~\cite{kreutzer}. We believe it might be fruitful to investigate applications of this structural result to other packing problems in digraphs of similar nature. 

Theorem~\ref{thm:mainconn} shows that a counterexample to Lichiardopol's conjecture, if it exists, cannot contain any large well-connected parts. Intuitively, this means that the digraph does not have a 'rich' directed cycle structure, and other methods might apply to fully resolve the conjecture in this case. To illustrate this intuition, we prove Lichiardopol's conjecture for digraphs whose \emph{directed tree-width} is bounded. Directed tree-width is a parameter introduced by Johnson, Robertson, Seymour and Thomas~\cite{seymourdirectedwidth} as a generalization of undirected tree-width to directed graphs, which is supposed to be small for digraphs which have a 'sparse' directed cycle structure (we refer to Section~\ref{sec:prelim} for details on this parameter). 
\begin{proposition}\label{prop:boundedwidth}
Let $k, d \in \mathbb{N}$. Every digraph $D$ with $\delta^+(D)  > (d+2)(k-1)$ and directed tree-width at most $d$ contains $k$ vertex-disjoint directed cycles of pairwise distinct lengths.
\end{proposition}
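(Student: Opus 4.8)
The plan is to argue by induction on $k$. For $k=1$ the statement is immediate, since $\delta^+(D)>0$ forces a directed cycle. Assume now that $k\ge 2$ and that the statement holds for $k-1$. Fix a directed tree decomposition $(T,\beta,\gamma)$ of $D$ of width at most $d$, with $T$ an out-arborescence; for $t\in V(T)$ write $\beta(\ge t)$ for the union of the bags of $t$ and of its descendants, and put $Z_t:=\beta(t)\cup\bigcup_{e\ni t}\gamma(e)$, so that $|Z_t|\le d+1$ by the width bound. Among all $t\in V(T)$ for which $D[\beta(\ge t)]$ contains a directed cycle (there is at least one, namely the root), choose $t^\ast$ with $|\beta(\ge t^\ast)|$ minimum. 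Using the $\gamma$-normality of the sets $\beta(\ge t')$ for the children $t'$ of $t^\ast$ together with the minimality of $t^\ast$, one verifies that $D[\beta(\ge t^\ast)]-Z_{t^\ast}$ is acyclic; combining this with the $\gamma(e_{t^\ast})$-normality of $\beta(\ge t^\ast)$ along the parent edge $e_{t^\ast}$ of $t^\ast$, one further verifies that \emph{every directed cycle of $D-Z_{t^\ast}$ is contained in $D-\beta(\ge t^\ast)$}. In particular $\beta(\ge t^\ast)\neq V(D)$, for otherwise $D-Z_{t^\ast}$ would be acyclic, hence would have a sink, all of whose at least $\delta^+(D)$ out-neighbours would lie in $Z_{t^\ast}$ --- impossible, since $\delta^+(D)>(d+2)(k-1)\ge d+1\ge|Z_{t^\ast}|$.

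Now delete $Z_{t^\ast}$. The digraph $D':=D-Z_{t^\ast}$ is an induced subdigraph of $D$, so its directed tree-width is still at most $d$, and $\delta^+(D')\ge\delta^+(D)-(d+1)>(d+2)(k-1)-(d+1)>(d+2)(k-2)$. By the induction hypothesis, $D'$ contains $k-1$ vertex-disjoint directed cycles $C_1,\dots,C_{k-1}$ of pairwise distinct lengths, and by the previous paragraph each $C_i$ is contained in $D-\beta(\ge t^\ast)$. On the other hand $D[\beta(\ge t^\ast)]$ contains a directed cycle $C_0$, and any such $C_0$ is vertex-disjoint from all of $C_1,\dots,C_{k-1}$. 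Thus $C_0,C_1,\dots,C_{k-1}$ is a family of $k$ pairwise vertex-disjoint directed cycles, and the proof is complete once $C_0$ is chosen so that $|C_0|\notin\{|C_1|,\dots,|C_{k-1}|\}$.

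Securing this last requirement is the step I expect to be the main obstacle. If $D[\beta(\ge t^\ast)]$ contains a directed cycle whose length avoids the $(k-1)$-element set $\{|C_1|,\dots,|C_{k-1}|\}$, we take it as $C_0$ and finish; otherwise all directed cycles of $D[\beta(\ge t^\ast)]$ have lengths in this set, and one has to work harder. In this exceptional case one should exploit that $D[\beta(\ge t^\ast)]$ is strongly constrained --- it has the feedback vertex set $Z_{t^\ast}$ of size at most $d+1$, and all its cycles run through $\beta(t^\ast)$ or through a child adhesion --- while at the same time one is free to re-select the focal node $t^\ast$ and to vary the recursively obtained family $C_1,\dots,C_{k-1}$ over all admissible choices, and $\delta^+(D)$ is large. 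I expect that balancing these degrees of freedom --- re-choosing the subtree, rerouting one of the cycles through an additional vertex made available by the large out-degree, or reorganising the whole family --- yields a directed cycle of a length not appearing among the others, completing the induction. Carrying this out, along with checking the normality claims invoked above, is where the substance of the argument lies; everything else is routine bookkeeping.
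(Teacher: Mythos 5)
Your induction skeleton --- pick a deepest node $t^\ast$ whose subtree $\beta(\ge t^\ast)$ carries the structure you want, delete $\Gamma(t^\ast)$, and recurse, tracking the drop in $\delta^+$ --- is essentially the same recursion the paper uses, and your arithmetic is correct. But the gap you honestly flag at the end is genuine and is not just a matter of ``balancing degrees of freedom'': nothing prevents $D[\beta(\ge t^\ast)]$ from being a small digraph all of whose directed cycles have one and the same length $\ell$ (it need not have large out-degree, since vertices of $\beta(\ge t^\ast)$ may have most of their out-neighbours outside), and that $\ell$ may already occur among $|C_1|,\dots,|C_{k-1}|$. Re-choosing $t^\ast$ or reorganising the recursively found family has no clear mechanism to fix this, so as written the argument does not close.

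The paper's fix is a small but decisive change of object: one should not pack single directed cycles, but \emph{$k$-trains} --- a directed path $u_0,\dots,u_\ell$ together with $k$ back-arcs $(u_\ell,u_{\ell_j})$. A $k$-train is strongly connected and contains cycles of $k$ \emph{pairwise distinct} lengths, so any one $k$-train carries a built-in budget of $k$ candidate lengths, enough to avoid any $k-1$ prescribed values; this is Observation~\ref{obs:disjointktrains}. Existence is also cheap: $\delta^+\ge k$ forces a $k$-train, by taking a longest directed path and using that all out-neighbours of its last vertex lie on it (Observation~\ref{obs:existsktrain}). With these two facts, your very recursion (with ``contains a directed cycle'' replaced by ``contains a $k$-train'') becomes the Erd\H{o}s--P\'osa-type Lemma~\ref{lemma:erdosposaboundedwidth}: either $D$ has $k$ vertex-disjoint $k$-trains, or a set $X$ with $|X|\le (d+1)(k-1)$ hits every $k$-train. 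The latter is impossible, since
\[
\delta^+(D-X)\ \ge\ \delta^+(D)-(d+1)(k-1)\ >\ (d+2)(k-1)-(d+1)(k-1)\ =\ k-1,
\]
so $\delta^+(D-X)\ge k$ and $D-X$ still contains a $k$-train. Hence $D$ contains $k$ vertex-disjoint $k$-trains and, by Observation~\ref{obs:disjointktrains}, $k$ vertex-disjoint cycles of pairwise distinct lengths. In short: your recursion is right, but you should be packing $k$-trains rather than cycles; that substitution is precisely the missing ingredient that controls the final length.
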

In contrast to undirected graphs, digraphs of large minimum out-degree need not have large directed tree-width. In Section~\ref{sec:prelim}, Remark~\ref{rem:largedegreewidth1} we will show examples of digraphs having arbitrarily large minimum out-degree but directed tree-width one. Hence, Proposition~\ref{prop:boundedwidth} verifies Lichiardopol's conjecture for an additional non-trivial class of digraphs.

\paragraph{Structure of the paper.} In Section~\ref{sec:prelim} we introduce necessary notation and give important definitions of notions used in the paper, such a directed tree-width, and make some preliminary observations required later in the paper. Most importantly, we explain the necessary background and the statement of the \emph{Directed Flat Wall Theorem} from~\cite{kreutzer}. In Section~\ref{sec:proofs} we give the proofs of Theorem~\ref{thm:mainconn} and Theorem~\ref{thm:mainsem}. The directed flat wall theorem yields a natural division of our proofs into three different cases, which are prepared separately in the Subsections~\ref{sec:treewidth},~\ref{sec:kt} and~\ref{sec:flatwall}. In Subsection~\ref{sec:treewidth} we study digraphs of bounded directed tree-width and give the proof of Proposition~\ref{prop:boundedwidth}. In Subsection~\ref{sec:kt} we show that digraphs containing a large complete butterfly-minor contain many disjoint directed cycles of distinct lengths. In Subsection~\ref{sec:flatwall} we study digraphs containing large flat walls and prove sufficient conditions for the existence of disjoint directed cycles of different lengths. Finally, in Subsection~\ref{sec:thm12} we put the insights from the previous subsections together to conclude the proofs of Theorems~\ref{thm:mainconn} and~\ref{thm:mainsem}. In Section~\ref{sec:equicardinal}, we give the proof of Proposition~\ref{prop:notwoofsamelength}. We conclude with a conjecture in Section~\ref{sec:conc}.
\section{Preliminaries}\label{sec:prelim}
\paragraph*{Notation.} Digraphs in this paper are considered loopless, have no parallel arcs, but are allowed to have anti-parallel pairs of arcs (\emph{digons}). An arc (also called directed edge) with tail $u$ and head $v$ is denoted by $(u,v)$. The vertex-set (resp. arc-set) of a digraph $D$ is denoted by $V(D)$ (resp. $A(D)$). For $X \subseteq V(D)$, we denote by $D[X]$ the  subdigraph of $D$ induced by $X$. For a set $X$ of vertices or arcs in $D$, we denote by $D-X$ the subdigraph obtained by deleting the objects in $X$ from $D$.
We use $\bivec{K}_k$ to denote the complete digraph of order $k$, i.e., in which every ordered pair of distinct vertices appears as an arc.
For a digraph $D$ and a vertex $v \in V(D)$, we let $N_D^+(v),$ $N_D^-(v)$ denote the out- and in-neighborhood of $v$ in $D$ and $d_D^+(v)$, $d_D^-(v)$ their respective sizes (we drop the subscript $D$ if it is clear from context). We denote by $\delta^+(D)$, $\delta^-(D)$, $\Delta^+(D)$, $\Delta^-(D)$ the minimum or maximum  out- or in-degree of $D$, respectively. 
A \emph{directed walk} in a digraph is an alternating sequence $v_1,e_1,v_2,\ldots,v_{k-1},e_{k-1},v_k$ of vertices and arcs such that $e_i=(v_i,v_{i+1})$ for all $1 \leq i \leq k-1$. The walk is called {\em closed} if $v_k = v_1$. 
 
If $v_1,\ldots,v_k$ are pairwise distinct, then we say that $P$ is a \emph{directed path} or \emph{dipath} from $v_1$ to $v_k$ ({\em $v_1$-$v_k$-dipath} for short). We will call $v_1$ the first vertex of $P$, $v_2$ the second vertex of $P$, $v_k$ the last vertex of $P$, etc., and refer to $v_1, v_k$ as the \emph{endpoints} of $P$. By $V(P)=\{v_1,\ldots,v_k\}$ we denote the vertex-set of $P$.
Given two distinct vertices $x \neq y$ on a directed path $P$, we denote by $P[x,y]=P[y,x]$ the subpath of $P$ with endpoints $x$ and $y$. A vertex $v$ in a digraph $D$ is said to be \emph{reachable} from a vertex $u$ if there exists a $u$-$v$-dipath. For a pair of dipaths $P,Q$ such that the first vertex of $Q$ is the last vertex of $P$, we denote by $P \circ Q$ the concatenation of $P$ and $Q$, i.e. the directed walk obtained by first traversing $P$ and then traversing $Q$. When $P$ (resp. $Q$) consists of a single arc $(x,y)$, we will sometimes write $(x,y) \circ Q$ (resp. $P \circ (x,y)$) instead of $P \circ Q$. 

A \emph{directed cycle} is a cycle with all arcs oriented consistently in one direction. For a directed cycle $C$ and two distinct vertices $x,y \in V(C)$, we denote by $C[x,y]$ the segment of $C$ which forms a dipath from $x$ to $y$. By $|C|$ we denote the length of $C$.

A digraph $D$ is called \emph{strongly connected} if for every ordered pair $(x,y) \in V(D)\times V(D)$ of vertices, $x$ can reach $y$ in $D$. The maximal strongly connected subgraphs of a digraph $D$ are called \emph{(strong) components} and induce a partition of $V(D)$. For a natural number $k \ge 2$, a digraph $D$ is called \emph{strongly $k$-vertex-connected} (or simply strongly $k$-connected) if $|V(D)| \ge k+1$ and for every set $S$ of at most $k-1$ vertices (arcs) of $D$, the digraph $D-K$ is strongly connected. An \emph{out-arborescence} is a directed rooted tree in which all arcs are directed away from its root.

Next we state the version of the directed flat wall theorem we need from~\cite{kreutzer}, and in order to do so we recall several definitions from structural digraph theory, which will reappear in our proofs. We start with the concept of directed tree-width. This is a generalization of the highly influential notion of tree-width to directed graphs, which was introduced by Johnson, Robertson, Seymour and Thomas~\cite{seymourdirectedwidth}. The basic idea of directed tree-width is to measure the richness of the directed cycle structure of a digraph. For instance, digraphs of directed tree-width $0$ are exactly the acyclic digraphs. We now give the formal definitions.

For two distinct vertices $r, r' \in V(T)$ in an out-arborescence $T$ we write $r<r'$ if $r'$ is reachable from $r$ in $T$. We write $r \le r'$ to express that $r=r'$ or $r<r'$. If $e \in A(T)$ whose head is $r$, then we write $e<r'$ if $r \le r'$.
For a digraph $D$ and a subset $Z \subseteq V(D)$, we call $S \subseteq V(D)\setminus Z$ a \emph{$Z$-normal} set if the vertex set of every directed walk in $D-Z$ which starts and ends in $S$ is completely contained in $S$. Note that every $Z$-normal set is a union of certain strong components of $D-Z$.
\begin{definition}[cf.~\cite{seymourdirectedwidth}]
A \emph{directed tree decomposition} of a digraph $D$ is a triple $(T,\beta,\gamma)$, where $T$ is an out-arborescence, $\beta:V(T) \rightarrow 2^{V(D)}$, $\gamma: A(T) \rightarrow 2^{V(D)}$ are functions such that
\begin{enumerate}[label=(\roman*)]
\item $\{\beta(t)|t \in V(T)\}$ is a partition of $V(D)$ into non-empty sets and
\item if $e \in A(T)$, then $\bigcup\{\beta(t)|t \in V(T), e<t\}$ is $\gamma(e)$-normal.
\end{enumerate}
For every $t \in V(T)$ we denote $$\Gamma(t):=\beta(t) \cup \bigcup \{\gamma(e)| e \in A(T), e \sim t\},$$ where $e \sim t$ expresses that $e$ is incident with $t$ in $T$. 
Then the \emph{width} of the tree-decomposition $(T,\beta,\gamma)$ is defined as $w=\max_{t \in V(T)}{|\Gamma(t)|}-1.$ The \emph{directed tree-width} of $D$, denoted by $\dtw(D)$, is the smallest possible width of a directed tree-decomposition of $D$.
\end{definition}
The sets $\beta(t)$ are called \emph{bags}, while the sets $\gamma(e)$ are the \emph{guards} of the tree-decomposition.

Let us give at this point the examples demonstrating that, in contrast to undirected graphs, digraphs of large minimum out-degree need not have large directed tree-width.
\begin{remark}\label{rem:largedegreewidth1}
For every $k \in \mathbb{N}$ there exists a digraph $F_k$ such that $\delta^+(F_k)=k$ and $\dtw(F_k)=1$.
\end{remark}
\begin{proof}
Let $k \in \mathbb{N}$ be fixed. Let us denote by $T_k$ the unique $k$-ary out-arborescence of depth $k$ (that is, every non-leaf vertex has $k$ children, and every leaf has distance $k$ from the root). Let $r$ denote the root of $T_k$. Let $F_k$ be the digraph obtained from $T_k$ by adding the arc $(u,v)$ for every leaf $u \in V(T_k)$ and every vertex $v \in V(T_k)$ such that $u>v$ in $T_k$. Since every leaf of $T_k$ has $k$ ancestors, $D_k$ has minimum out-degree $k$. To see that $\dtw(F_k)=1$, let us consider the directed tree-decomposition $(T,\beta,\gamma)$ of $F_k$, where $T:=T_k$, $\beta(t):=\{t\}$ for every $t \in V(T)$ and $\gamma(e):=\{\text{tail}(e)\}$ for every arc $e \in A(T)$. To see that this forms a directed tree-decomposition, let $e=(u,v) \in A(T)$ be arbitrary. Then $S:=\bigcup\{\beta(t)|t \in V(T), e<t\}$ is the set of vertices in $F_k$ contained in the subtree of $T_k$ rooted at $v$. We need to show that $S$ is $\gamma(e)=\{u\}$-normal in $D$. However, in $F_k-u$, there exists no arc starting in a vertex outside $S$ and ending in $S$, which directly shows that a directed walk in $F_k-u$ ending in $S$ must be contained in $S$. This shows that $(T,\beta,\gamma)$ indeed is a directed tree-decomposition. By definition of $\beta$ and $\gamma$ we have $\Gamma(t)=\beta(t) \cup \bigcup\{\gamma(e)|e \in A(T), e \sim t\}=\{t,\text{parent}(t)\}$ for every vertex $t \in V(T)\setminus \{r\}$, and $\Gamma(r)=\{r\}$. It follows that the width of $(T,\beta,\gamma)$ is $2-1=1$, proving that $\dtw(F_k)=1$.
\end{proof}

We will need a relation between directed tree-width and so-called \emph{havens} proved in~\cite{seymourdirectedwidth}.
\begin{definition}
Let $D$ be a digraph and let $k \in \mathbb{N}$. A \emph{haven} of order $k$ for $D$ is a function $h$, assigning to every set $X \subseteq V(D)$ of size $|X|<k$ a set $h(X) \subseteq V(D)\setminus X$, which forms a strong component of $D-X$, and which has the property that $h(X) \supseteq h(Y)$ for every $X \subseteq Y \subseteq V(D)$ with $|Y|<k$.
\end{definition}
\begin{theorem}[cf.~\cite{seymourdirectedwidth}, Theorem 3.1]\label{thm:largehavenforceslargedtw}
Let $D$ be a digraph. If $D$ admits a haven of order $k$, then $\dtw(D) \ge k-1$.
\end{theorem}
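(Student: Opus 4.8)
The plan is to prove the contrapositive in the following form: if $D$ admits a haven $h$ of order $k$, then \emph{every} directed tree decomposition of $D$ has width at least $k-1$, whence $\dtw(D)\ge k-1$. So let $(T,\beta,\gamma)$ be a directed tree decomposition and assume towards a contradiction that its width is at most $k-2$, i.e.\ $|\Gamma(t)|\le k-1$ for every $t\in V(T)$. For an arc $e\in A(T)$ with head $r$, let $T_e$ be the subtree of $T$ rooted at $r$ and put
$$B_e^{+}:=\bigcup\{\beta(t): t\in V(T_e)\}=\bigcup\{\beta(t): e<t\},\qquad B_e^{-}:=V(D)\setminus B_e^{+}.$$
Since the bags partition $V(D)$ we have $B_e^{-}=\bigcup\{\beta(t): t\notin V(T_e)\}$, and by axiom (ii) the set $B_e^{+}$ is $\gamma(e)$-normal, hence a union of strong components of $D-\gamma(e)$.

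Next I would use the haven to orient the edges of the undirected tree underlying $T$. Fix $e\in A(T)$; since $\gamma(e)\subseteq\Gamma(t)$ for an endpoint $t$ of $e$, we get $|\gamma(e)|\le k-1<k$, so $h(\gamma(e))$ is defined and is a single strong component of $D-\gamma(e)$. As $B_e^{+}$ is a union of strong components of $D-\gamma(e)$, exactly one of $h(\gamma(e))\subseteq B_e^{+}$ or $h(\gamma(e))\subseteq B_e^{-}$ holds; in the first case I orient $e$ towards its head, in the second towards its tail. Because $T$ is a finite tree, this orientation of its edges has a \emph{sink} $t^{*}$, i.e.\ a node all of whose incident edges point towards it.

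The crucial step is to show that $h(\Gamma(t^{*}))\subseteq\beta(t^{*})$. Let $s\in V(T)\setminus\{t^{*}\}$ and let $e$ be the first edge on the $t^{*}$--$s$ path in $T$ (disregarding orientations); by the choice of $t^{*}$, $e$ is oriented towards $t^{*}$. If $t^{*}$ is the head of $e$, then $V(T_e)$ is the subtree rooted at $t^{*}$ and $s\notin V(T_e)$, so $\beta(s)\subseteq B_e^{-}$, while $e$ pointing to its head gives $h(\gamma(e))\subseteq B_e^{+}$; if $t^{*}$ is the tail of $e$, then $s$ lies in the subtree rooted at the head of $e$, so $\beta(s)\subseteq B_e^{+}$, while $e$ pointing to its tail gives $h(\gamma(e))\subseteq B_e^{-}$. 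In either case $h(\gamma(e))\cap\beta(s)=\emptyset$, and since $\gamma(e)\subseteq\Gamma(t^{*})$ the monotonicity of the haven yields $h(\Gamma(t^{*}))\subseteq h(\gamma(e))$, hence $h(\Gamma(t^{*}))\cap\beta(s)=\emptyset$. As this holds for all $s\neq t^{*}$ and the bags partition $V(D)$, we conclude $h(\Gamma(t^{*}))\subseteq\beta(t^{*})\subseteq\Gamma(t^{*})$. But $h(\Gamma(t^{*}))\subseteq V(D)\setminus\Gamma(t^{*})$ by the definition of a haven, so $h(\Gamma(t^{*}))=\emptyset$; this contradicts that $h(\Gamma(t^{*}))$ is a strong component of the nonempty digraph $D-\Gamma(t^{*})$ (nonempty because $|\Gamma(t^{*})|\le k-1<k\le|V(D)|$, the last inequality holding since a haven of order $k$ cannot exist when $|V(D)|<k$, as is seen by applying it to $X=V(D)$). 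The main obstacle is the bookkeeping in this last paragraph: one must carefully distinguish whether an arc of the out-arborescence $T$ has $t^{*}$ as its head or as its tail, and keep track of on which side of the arc the subtree $V(T_e)$ and the node $s$ lie. Everything else — that a $Z$-normal set is a union of strong components of $D-Z$, that an orientation of a finite tree has a sink, and the monotonicity of havens — is routine.
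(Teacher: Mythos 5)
The paper merely cites this result from Johnson, Robertson, Seymour and Thomas (\cite{seymourdirectedwidth}, Theorem 3.1) and supplies no proof of its own, so there is no in-paper argument to compare against. Your proposal is correct and is essentially the standard argument from the cited source: orient each tree-edge towards the side of the $\gamma(e)$-normal bipartition that contains the haven component $h(\gamma(e))$, locate a sink $t^*$ of that orientation, and use monotonicity of $h$ together with $\gamma(e)\subseteq\Gamma(t^*)$ for every $e$ incident to $t^*$ to trap $h(\Gamma(t^*))$ inside $\beta(t^*)\subseteq\Gamma(t^*)$, contradicting $h(\Gamma(t^*))\subseteq V(D)\setminus\Gamma(t^*)$. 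Your case analysis on whether $t^*$ is the head or the tail of $e$ in the arborescence $T$ (which is logically independent of the auxiliary ``towards-the-haven'' orientation) is handled correctly, and your observation that $|V(D)|\ge k$ must hold (since $h(V(D))$ would otherwise have to be a strong component of the empty digraph) cleanly rules out the degenerate case needed to ensure $D-\Gamma(t^*)$ is nonempty.
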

Let us observe the following consequence of Theorem~\ref{thm:largehavenforceslargedtw} for later use.
\begin{corollary}\label{cor:highconnhighwidth}
Let $k \in \mathbb{N}$, and let $D$ be a strongly $k$-connected digraph. Then $\dtw(D) \ge k$.
\end{corollary}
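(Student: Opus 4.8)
The plan is to apply Theorem~\ref{thm:largehavenforceslargedtw}: since that theorem asserts that a haven of order $k+1$ forces directed tree-width at least $k$, it suffices to construct a haven of order $k+1$ for $D$.

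First I would define the haven $h$ on all sets $X \subseteq V(D)$ with $|X| < k+1$, i.e.\ with $|X| \le k$. For every such $X$ with $|X| \le k-1$, strong $k$-connectivity of $D$ guarantees that $D-X$ is strongly connected, and since $|V(D)| \ge k+1$ the digraph $D-X$ is nonempty with at least two vertices; hence $V(D)\setminus X$ is the unique strong component of $D-X$, and I set $h(X) := V(D)\setminus X$. For every $X$ with $|X| = k$, the digraph $D-X$ is still nonempty (again because $|V(D)| \ge k+1$), so I let $h(X)$ be an arbitrary strong component of $D-X$, fixing one by some rule to make $h$ well-defined. By construction $h(X)$ is always a strong component of $D-X$ contained in $V(D)\setminus X$, as required of a haven.

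Next I would verify the monotonicity condition $h(X) \supseteq h(Y)$ whenever $X \subseteq Y$ and $|Y| < k+1$. If $|Y| \le k-1$, then also $|X| \le k-1$, so $h(X) = V(D)\setminus X \supseteq V(D)\setminus Y = h(Y)$. If $|Y| = k$, then either $X = Y$, in which case the containment is trivial, or $|X| \le k-1$, in which case $h(X) = V(D)\setminus X \supseteq V(D)\setminus Y \supseteq h(Y)$, using that $h(Y)$ is a subset of $V(D)\setminus Y$. Thus $h$ is a haven of order $k+1$, and Theorem~\ref{thm:largehavenforceslargedtw} yields $\dtw(D) \ge (k+1)-1 = k$.

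There is essentially no serious obstacle here; the single point that needs a moment of care is that for sets $X$ of size exactly $k$ the digraph $D-X$ need not be strongly connected, so $h(X)$ must be chosen as one particular strong component rather than all of $V(D)\setminus X$. This arbitrary choice is harmless for monotonicity precisely because two distinct sets of size $k$ are never nested, so the only containments that must be checked are those between a set of size $\le k-1$ and a larger set, where the smaller bag equals the full complement and hence automatically contains the larger bag.
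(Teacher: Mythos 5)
Your proof is correct and follows essentially the same approach as the paper: you construct the same haven of order $k+1$ (taking $h(X)=V(D)\setminus X$ for $|X|\le k-1$ and an arbitrary strong component of $D-X$ when $|X|=k$), verify monotonicity in the same case split, and then invoke Theorem~\ref{thm:largehavenforceslargedtw}.
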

\begin{proof}
Let us define $h:\binom{V(D)}{\le k} \rightarrow 2^{V(D)}$ as follows: or every $X \subseteq V(D)$ with $|X| \le k$, we let $h(X):=V(D) \setminus X$, if $|X|<k$, while $h(X)$ is defined as the vertex-set of an arbitrarily chosen strong component of $D-X$ if $|X|=k$. We claim that $h$ is a haven of order $k+1$ for $D$. Indeed, for every $X \subseteq V(D)$ such that $|X|<k$, $D-X$ is strongly connected and hence $h(X)=V(D)\setminus X$ is the unique strong component of $D-X$. If $|X|=k$, then by definition $h(X)$ is a strong component of $D-X$. Let now $X \subseteq Y \subseteq V(D)$ be arbitary such that $|Y| \le k$. If $|Y|<k$, then we clearly have $h(X)=V(D)\setminus X \supseteq V(D)\setminus Y$. If $|Y|=k$, then either $X=Y$ and $h(X)=h(Y)$ or $|X|<k$ and $h(X)=V(D)\setminus X \supseteq h(Y)$, since $h(Y)$ is a strong component of $D-Y \subseteq D-X$. This shows that indeed, $h$ is a haven of order $k+1$ in $D$ and the claim follows by applying Theorem~\ref{thm:largehavenforceslargedtw}.
\end{proof}

As for undirected tree-width, there exists a duality theorem for directed tree-width and the containment of large grid-like substructures, known as the \emph{directed grid theorem}. This was proved in a breakthrough-result by Kawarabayashi and Kreutzer~\cite{kawkreut}. The \emph{directed flat wall theorem} can be seen as a further strengthening of this result. In order to make both theorems precise, we need to specify what we mean by 'containment' and by 'grid-like substructure'.

\begin{definition}[Topological minor]
Given a digraph $D$, a \emph{subdivision} of $D$ is any digraph $D'$ obtained from $D$ by replacing every arc $e=(u,v) \in A(D)$ by a directed path $P_{e}$ starting in $u$ and ending in $v$, such that the paths $P_e, e \in A(D)$ are pairwise internally vertex-disjoint and intersect $V(D)$ only in their endpoints. Given a subdivision $D'$, the vertices in $D'$ originally contained in $D$ are called \emph{branch vertices}, while the vertices internal to a path $P_e, e \in A(D)$ are called \emph{subdivision vertices}. Let $D_1, D_2$ be digraphs. We say that $D_2$ is a \emph{topological minor} of $D_1$, if $D_1$ contains a subdivision of a digraph isomorphic to $D_2$ as a subdigraph.
\end{definition}
\begin{definition}[Butterfly minor]
Let $D$ be a digraph. An arc $e=(u,v) \in A(D)$ is called \emph{contractible} if $d_D^+(u)=1$ or $d_D^-(v)=1$, i.e. if $e$ is the only arc leaving $u$ or the only arc entering $v$. If $e$ is contractible, we denote by $D/e$ the digraph obtained from $D$ by contracting $e$, i.e., identifying $u$ and $v$ and joining their neighborhoods, ignoring loops and multiple arcs in the same direction created by this process. More precisely, we define $D/e$ by
$$V(D/e):=V(D)\setminus \{u\}, A(D/e):=A(D-u) \cup \{(x,v)|x \in N_D^-(u)\setminus \{v\}\},$$
if $d_D^+(u)=1$, and
$$V(D/e):=V(D)\setminus \{v\}, A(D/e):=A(D-v) \cup \{(u,x)|x \in N_D^+(v)\setminus \{u\}\},$$
if $d_D^+(u) \ge 2$ and $d_D^-(v)=1$. 

A digraph $D'$ is called a \emph{butterfly minor} of another digraph $D$ if $D'$ is isomorphic to a digraph which can be obtained from $D$ via a finite sequence of arc-deletions, vertex-deletions, and contractions of contractible arcs (in arbitrary order).
\end{definition}

We remark that it can be proved by induction that if $F$ is a digraph of maximum degree $3$ such that $\Delta^+(F), \Delta^-(F) \le 2$, then $F$ is a butterfly-minor of $D$ iff it is a topological minor of $D$.

\begin{definition}[Cylindrical Grid, cf.~\cite{kreutzer}]
A \emph{cylindrical grid of order $k$} for $k \in \mathbb{N}, k \ge 1$ is a digraph $G_k$ consisting of $k$ directed cycles $C_1,\ldots,C_k$, pairwise vertex-disjoint, together with a set of $2k$ pairwise vertex-disjoint directed paths $P_1,\ldots,P_{2k}$, such that 
\begin{itemize}
\item each path $P_i$ has exactly one vertex in common with each cycle $C_j$ and both endpoints of $P_i$ are in $V(C_1) \cup V(C_k)$,
\item the paths $P_1,\ldots,P_{2k}$ appear on $C_i$ in this order, and
\item for odd $i$ the cycles $C_1,\ldots,C_k$ occur on all $P_i$ in this order and for even $i$ they appear in reverse order $C_k,\ldots,C_1$.
\end{itemize}
\end{definition}
\begin{definition}[Cylindrical Wall, cf.~\cite{kreutzer}]
An \emph{elementary cylindrical wall of order $k$} for $k \in \mathbb{N}$ is the planar digraph $W_k$ obtained from the cylindrical grid $G_k$ of order $k$ by replacing every vertex $v \in V(G_k)$ by two new vertices $v_s, v_t$ connected by an arc $(v_s,v_t)$ such that for every arc $(v,w) \in A(G_k)$, we have the corresponding arc $(v_t,w_s) \in A(W_k)$. A \emph{cylindrical wall of order $k$} is any digraph isomorphic to a subdivision of $W_k$.
\end{definition}
We remark that $G_k$ is a butterfly-minor of $W_k$ obtained by contracting all the split-edges $(v_s,v_t), v \in V(G_k)$, while it can be observed that $W_k \subseteq G_{2k}$ for every $k \in \mathbb{N}$. It is convenient to imagine grids, elementary cylindrical walls and their subdivisions as embedded in the plane, as depicted in Figure~\ref{fig:walls}. Every cylindrical wall $W$ of order $k$ contains in this canonical embedding $k$ pairwise vertex-disjoint 'vertical' directed cycles $Q_1,\ldots,Q_k$, as well as $2k$ pairwise vertex-disjoint directed horizontal paths $P_i^1,P_i^2, i=1,\ldots,k$ which are alternately directed from left to right or from right to left. To make references to specific vertices in a cylindrical wall easier, we assign coordinates to the branch vertices of a cylindrical wall of order $k$ based on its canonical embedding as follows: For every $1 \le i \le k$, and $1 \le j \le 2k$, the $j$-th branch vertex on the directed path $P_i^1$ receives coordinates $(j,2i-1)$, while the $j$-th branch vertex on the directed path $P_i^2$ receives coordinates $(2k+1-j,2i)$.
\begin{figure}[h]
\centering
\includegraphics[scale=0.4]{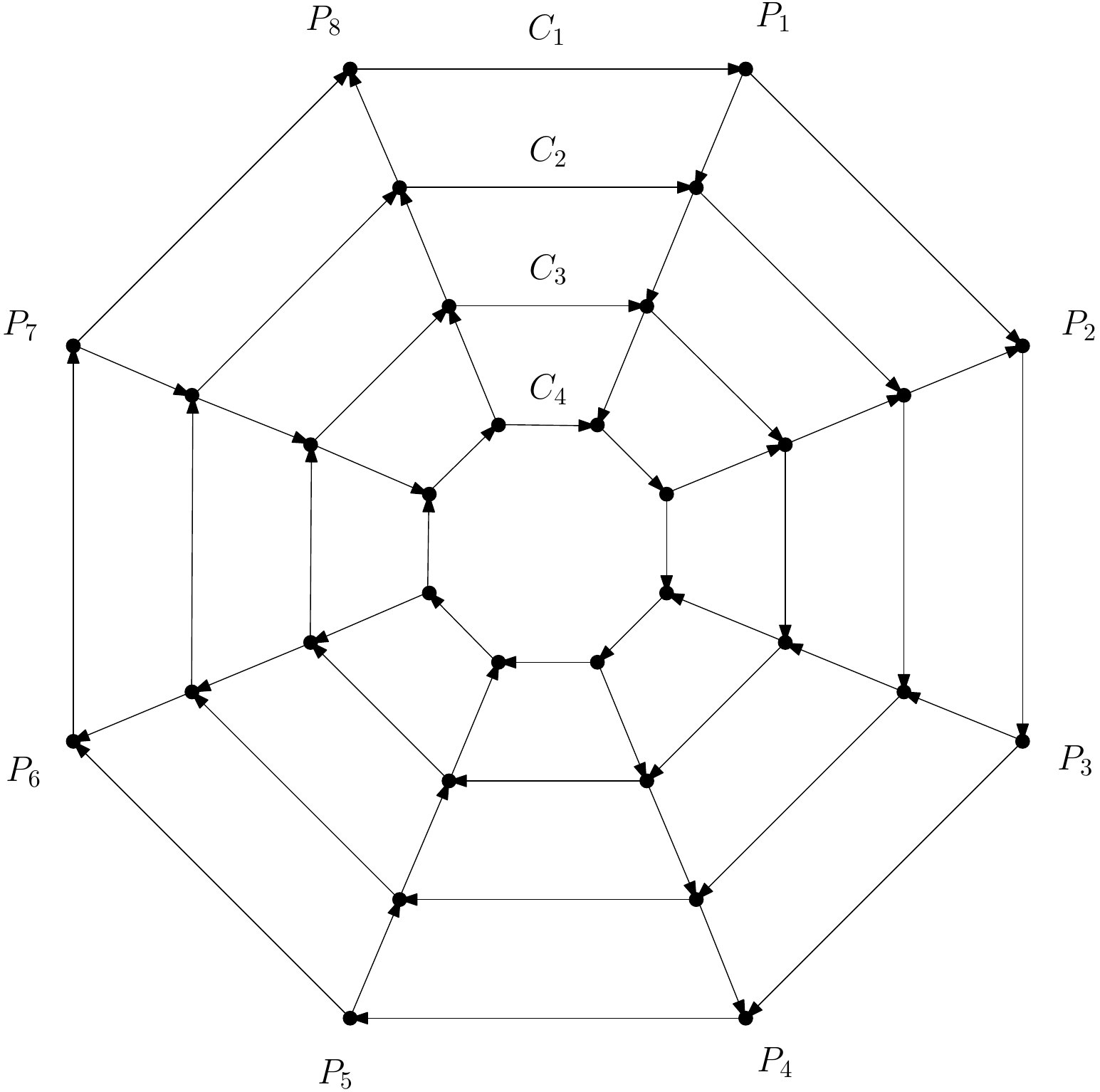} \includegraphics[scale=0.55]{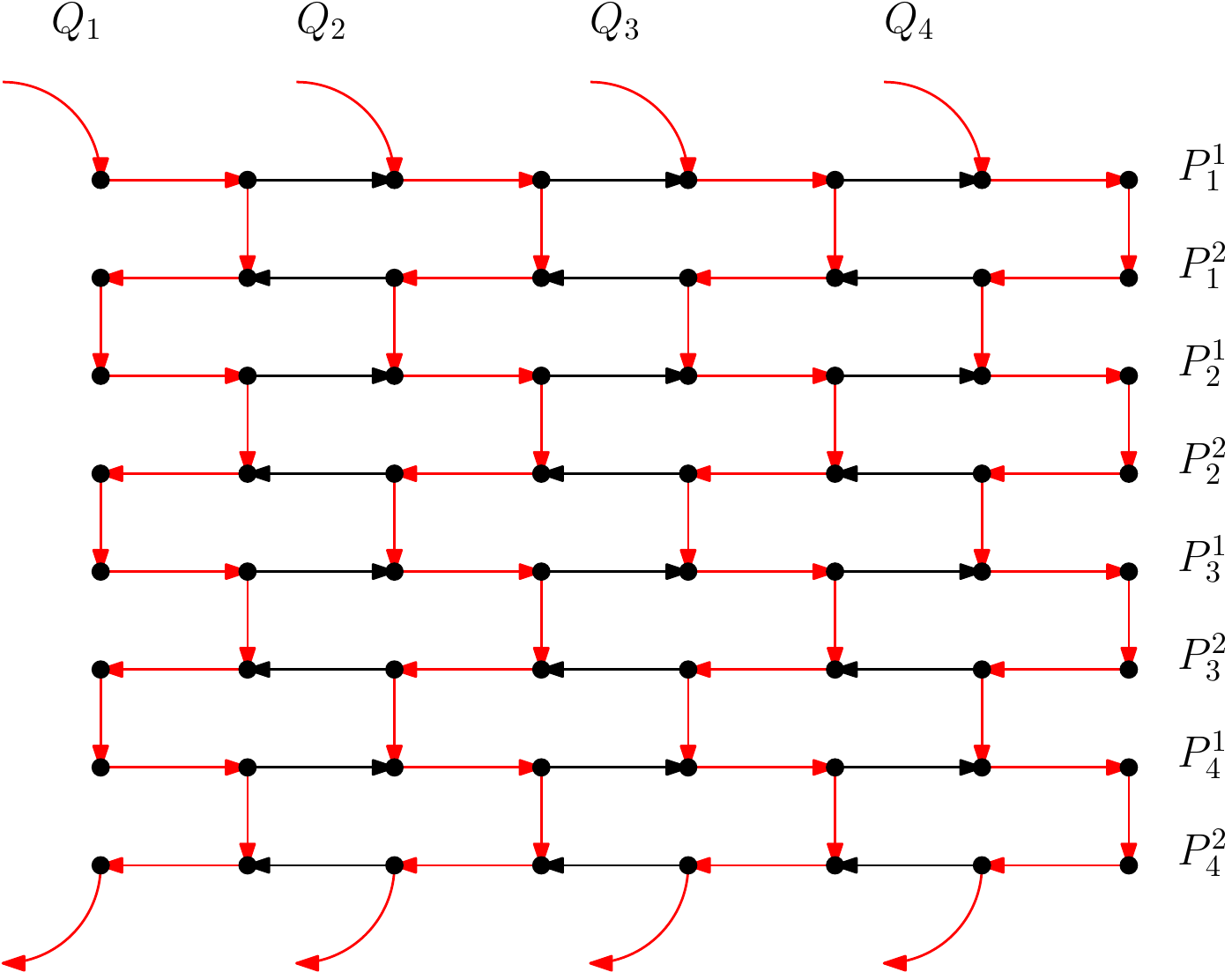}
\caption{Left: The cylindrical grid $G_4$ of order $4$. Right: The elementary cylindrical wall $W_4$ in its canonical depiction with the cycles $Q_1,\ldots,Q_4$ (marked red) and the horizontal paths $P_1^1,P_1^2\ldots,P_4^1,P_4^2$. The red half-edges at the top and the bottom of the wall indicate arcs connecting the paths $P_1^1$ and $P_4^2$.}\label{fig:walls}
\end{figure}

\begin{definition}[Perimeter, Bricks cf.~\cite{kreutzer}]
Let $W$ be a cylindrical wall of order $k$. The \emph{perimeter} $\textbf{per}(W)$ of $W$ is defined as the union $V(Q_1) \cup V(Q_k)$, where $Q_1$ and $Q_k$ are the first resp. last vertical directed cycles in the canonical embedding of $W$. We further define the \emph{interior} of $W$ by $\textbf{int}(W):=V(W) \setminus \textbf{per}(W)$.

A \emph{brick} of $W$ is a cycle in the canonical embedding of $W$ induced by the boundary of an inner face of $W$ (i.e., distinct from the two faces bounded by the cycles $Q_1$ and $Q_k$). Every brick contains exactly $6$ branch vertices of $W$.
\end{definition}

We are now ready to state the Directed Grid Theorem.

\begin{theorem}[cf.~\cite{kawkreut}]\label{thm:gridthm}
For every $k \in \mathbb{N}$ there exists an integer $d(k)$ such that every digraph $D$ with $\dtw(D) \ge d(k)$ contains the cylindrical grid of order $k$ as a butterfly-minor.
\end{theorem}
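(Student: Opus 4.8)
This is the \emph{Directed Grid Theorem} of Kawarabayashi and Kreutzer, a deep structural result whose full proof is long; I only describe the architecture of the argument. The plan is to pass from the hypothesis of large directed tree-width to a single highly-linked combinatorial obstruction, and then to build the cylindrical grid (equivalently, by the remarks after the definition of cylindrical walls, a cylindrical wall) of order $k$ as a butterfly-minor out of the routing structure that this obstruction provides.

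First I would use the min-max machinery surrounding directed tree-width to extract from a digraph $D$ with $\dtw(D) \ge d(k)$ a large \emph{well-linked set}: a set $S$ of vertices, of size growing with $d(k)$, such that for every pair of disjoint subsets $X, Y \subseteq S$ with $|X| = |Y|$ there are $|X|$ pairwise vertex-disjoint directed paths from $X$ to $Y$, and (by the same argument applied to the reverse digraph) also $|X|$ such paths from $Y$ to $X$. That such an $S$ exists when $\dtw(D)$ is large rests on the fact that a digraph with no directed tree-decomposition of small width admits a haven-type obstruction of large order (a converse-flavoured statement to Theorem~\ref{thm:largehavenforceslargedtw}), from which a well-linked set is obtained by a standard uncrossing/greedy argument using Menger's theorem in digraphs. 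This first stage is comparatively soft; the real work is in the next step.

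Next I would construct the grid from $S$ by building up its layers one at a time. The goal is, by induction on $j = 1, \dots, k$, to produce $j$ pairwise vertex-disjoint directed cycles $Q_1, \dots, Q_j$ (the ``concentric'' cycles) together with a linkage of many pairwise vertex-disjoint directed paths meeting all of them in the correct cyclic order (the ``radial'' paths), with many attachment points still available on a large remnant of $S$. Well-linkedness of $S$ is exactly the resource that lets one route the $(j{+}1)$-st concentric cycle and then re-route the radial linkage through it, each step paying a controlled multiplicative loss in the parameters; one may alternatively package this as first producing a large ``path-of-sets system'' / directed tangle and then cleaning it up. Once $k$ concentric cycles and $\Theta(k)$ radial paths with the right interleaving are in place, contracting the in- and out-segments between consecutive cycles along each radial path realizes the defining paths $P_1, \dots, P_{2k}$ of $G_k$, exhibiting $G_k$ as a butterfly-minor.

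The main obstacle --- and the reason the directed grid theorem resisted proof for roughly thirty years after its undirected ancestor of Robertson and Seymour --- is the bookkeeping imposed by orientations. A directed path from $a$ to $b$ yields nothing in the direction from $b$ to $a$, so the free rerouting arguments that are routine for undirected graphs must be replaced by two-sided well-linkedness and by a careful accounting, maintained across all $\Theta(k)$ peeling rounds, of which parts of the evolving structure are traversed ``clockwise'' versus ``counterclockwise'' and ``inward'' versus ``outward''. Making all of these parameter losses simultaneously affordable is the technical heart of the argument, and it is why the resulting bound $d(k)$ is enormous (a tower-type function in the original proof; polynomial bounds were obtained in later work).
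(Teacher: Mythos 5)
The paper does not prove Theorem~\ref{thm:gridthm}; it is quoted verbatim from Kawarabayashi and Kreutzer~\cite{kawkreut} and used as a black box (and in fact the paper ultimately works with the stronger Theorem~\ref{thm:flatwallstrong} from~\cite{kreutzer} rather than the grid theorem directly). So there is no internal proof to compare your proposal against. Your high-level sketch --- extract a large two-sided well-linked set as the dual obstruction to small directed tree-width, then iteratively build concentric cycles and a radial linkage while paying multiplicative parameter losses, with the orientation bookkeeping being the genuine difficulty --- is a fair description of the architecture of the Kawarabayashi--Kreutzer argument, but as you say yourself it is a plan and not a proof; for the purposes of this paper the correct move is simply to cite~\cite{kawkreut}, exactly as the author does.
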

Since $W_k \subseteq G_{2k}$, every digraph containing a $G_{2k}$-butterfly-minor also contains a $W_k$-minor and hence a subdivision of $W_k$, since $W_k$ is subcubic. We conclude that every digraph sufficiently high directed tree-width (at least $d(2k)$) contains a wall of order $k$ as a subdigraph. 

To state the strengthening of this statement guaranteed by the Directed Flat Wall Theorem, we need a definition of \emph{flatness} of a wall contained in a digraph. Giannopoulou et al.~(cf.~\cite{kreutzer}, Definition 2.14) give a complex definition including $3$ different items. For our purposes, only the properties of a flat wall guaranteed by the second item of their definition are required. To not complicate matters unnecessarily, we use the following weakened definition of flatness. Intuitively, the definition asserts that directed paths in the digraph do not jump far across the wall.
\begin{definition}
Let $D$ be a digraph, and let $W \subseteq D$ be a cylindrical wall. We say that $W$ is \emph{weakly flat in $D$}, if for every directed path in $D$ which is internally vertex-disjoint from $\textbf{int}(W)$ with both endpoints $x, y$ contained in $\textbf{int}(W)$, there exists a brick $B$ of $W$ such that $x,y \in V(B)$.
\end{definition}

We can now state a weak version of the Directed Flat Wall Theorem from~\cite{kreutzer} as follows.

\begin{theorem}[cf.~\cite{kreutzer}, Theorem 2.3]\label{thm:flatwallstrong}
For every $k, t \in \mathbb{N}$ there are integers $d(k,t)$ and $a(t)$ such that for every digraph $D$ at least one of the following is true:
\begin{enumerate}[label=(\roman*)]
\item $\dtw(D)<d(k,t)$,
\item $D$ contains $\bivec{K}_t$ as a butterfly-minor,
\item there is a set $X \subseteq V(D)$ of order $|X| \le a(t)$ and a cylindrical wall $W \subseteq D-X$ of order $k$ which is weakly flat in $D-X$.
\end{enumerate}
\end{theorem}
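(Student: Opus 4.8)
The plan is not to reprove flat-wall structure from scratch, but to obtain Theorem~\ref{thm:flatwallstrong} as a direct specialization of the Directed Flat Wall Theorem in its original form from~\cite{kreutzer} (their Theorem~2.3 together with Definition~2.14). I would set $d(k,t)$ and $a(t)$ to be precisely the quantities supplied there, so that for every digraph $D$ one of the three alternatives (i)--(iii) already holds, with the only discrepancy being that their alternative~(iii) is phrased using the full, three-part notion of flatness rather than our weakened notion. Hence the entire content of a proof is a single translation lemma: \emph{flat in the sense of~\cite{kreutzer} implies weakly flat in the sense of the definition above}, and the theorem follows by restating alternative~(iii) with this weaker conclusion.

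To carry this out I would first isolate the relevant part of~\cite{kreutzer}, Definition~2.14, namely its second item, which (as remarked in the text) is exactly the piece we need: it equips a flat wall $W \subseteq D-X$ with its canonical planar embedding together with a ``flap'' structure on $D-X$ witnessing that the part of the digraph drawn inside $W$ attaches to $W$ only locally. Concretely, one extracts from that item, for each brick $B$ of $W$, a subdigraph (flap) $D_B \subseteq D-X$ with $D_B \cap W = V(B)$, such that every vertex of $(D-X)-W$ lies in exactly one flap and distinct flaps meet only in vertices of $W$. I would then take an arbitrary directed path $P$ in $D-X$ that is internally vertex-disjoint from $\textbf{int}(W)$ with both endpoints $x,y \in \textbf{int}(W)$, and trace $P$ through this flap structure: each maximal subpath of $P$ leaving $W$ must be confined to a single flap $D_B$ and can re-enter $W$ only through $V(B)$; since the interior of $P$ avoids $\textbf{int}(W)$, chasing the endpoints of these subpaths along $P$ forces $x$ and $y$ onto the boundary of one common brick, which is exactly what weak flatness demands.

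The hard part will be the bookkeeping needed to align the two formalisms precisely, since~\cite{kreutzer} expresses flatness through a fairly involved apparatus (societies, linkages, and a cell/flap decomposition relative to the apex set $X$), and one must check that the two perimeter cycles $Q_1,Q_k$ that we exclude from $\textbf{int}(W)$ really do play the role of the boundary of the flat region, and that the local-attachment property pins \emph{both} endpoints of $P$ to a single brick rather than to a bounded union of bricks. If their definition should only guarantee attachment within a bounded-radius ball of bricks, the fallback is to prove instead that $x$ and $y$ lie within bounded distance in $W$, which still suffices for the applications in Section~\ref{sec:flatwall}, or to pass to a sufficiently large subwall on which the single-brick property is inherited. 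In all cases no argument beyond~\cite{kreutzer} is required; the claimed statement is a genuine weakening of theirs.
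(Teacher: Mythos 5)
Your proposal matches the paper's own treatment: this theorem is stated as a citation (``cf.~\cite{kreutzer}, Theorem 2.3''), and the paper does not reprove it but simply declares, just before introducing the weakened flatness definition, that only the second item of Definition~2.14 in~\cite{kreutzer} is needed and that the statement here is a weakening of theirs. You go somewhat further than the paper by actually sketching the translation lemma (flap structure implies weak flatness) and by anticipating the fallback of bounded brick-distance or passing to a subwall, but the overall strategy --- invoke the original theorem verbatim and argue that their notion of flat implies weakly flat --- is exactly the one the paper takes.
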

\section{Finding Disjoint Dicycles of Distinct Lengths}\label{sec:proofs}
In this section we give the proofs of Theorem~\ref{thm:mainconn} and Theorem~\ref{thm:mainsem}.

The structure of the section follows the three possible cases given by Theorem~\ref{thm:flatwallstrong}:
\begin{enumerate}[label=(\roman*)]
\item digraphs of bounded directed tree-width,\item digraphs containing a large complete butterfly-minor, \item digraphs containing a weakly flat wall. 
\end{enumerate}
In the last subsection we then prove Theorem~\ref{thm:mainconn} and Theorem~\ref{thm:mainsem} by applying Theorem~\ref{thm:flatwallstrong} and using the insights from the three previous subsections.

At this point it is worth pointing out why our proof strategy needs to make use of the Direced Flat Wall Theorem~\ref{thm:flatwallstrong} to make progress on Lichiardopol's conjecture, and why its weakening, the Directed Grid Theorem~\ref{thm:gridthm}, is not as helpful. The main reason for this is that while walls of large order contain many vertex-disjoint directed cycles, they might not even contain two directed cycles whose lengths are different, as shown by the Remark~\ref{rem:wallallsamelength} below. Hence, it seems unlikely that Theorem~\ref{thm:gridthm} will be helpful in obtaining disjoint directed cycles of distinct lengths, since it gives no information concerning the relation between the wall it contains and the rest of the digraph. This disadvantage is improved by the Directed Flat Wall Theorem, which restricts the ways in which directed paths can intersect a weakly flat wall.

\begin{remark}\label{rem:wallallsamelength}
For every $k \in \mathbb{N}$ there exists a cylindrical wall of order $k$ containing no two directed cycles of different lengths.
\end{remark}
\begin{proof}
A subdivision of $W_k$ is determined by the lengths of the subdivision-paths replacing its arcs. Hence, we may as well give an assignment $w:A(W_k) \rightarrow \mathbb{N}$ of positive integers to the arcs of $W_k$ such that the sum of the labels on any directed cycle is the same. To do so, let $R$ denote the set of arcs in $W_k$ starting in $V(P_k^2)$ and ending in $V(P_1^1)$, and observe that every directed cycle in $W_k$ contains \emph{exactly} one arc in $R$. As a consequence, the digraph $W_k-R$ is acyclic. Let $v_1,\ldots,v_{4k^2}$ be a topological ordering of this digraph, i.e., such that $(v_i,v_j) \notin A(W_k-R)$ whenever $i>j$. Let us define $w(e):=j-i$ for every $e=(v_i,v_j) \in A(W_k)\setminus R$. Let $L:=4k^2$. By definition of the arc-weighting $w$, for every directed path $P=v_{i_1},v_{i_2},\ldots,v_{i_\ell}$ in $W_k-R$, it now holds that $\sum_{e \in A(P)}{w(e)}=i_\ell-i_{\ell-1}+i_{\ell-1}-i_{\ell-2} \pm \ldots +i_2-i_1=i_\ell-i_1<L$. In particular, the total weight of any directed path in $A(W_k)$ depends only on its endvertices and is smaller than $L$. We conclude that for every arc $e=(u,v) \in R$, there exists a number $L_{uv}\in \mathbb{N}, L_{uv}<L$ such that every directed path in $W_k-R$ starting in $v$ and ending in $u$ has total arc-weight $L_{uv}$. Let us now put $w(e):=L-L_{uv} \in \mathbb{N}$ for every $e=(u,v) \in R$. 

Every directed cycle $C$ in $W_k$ intersects $R$ in exactly one arc $e=(u,v) \in R$. Then $C-e$ is a directed $v$-$u$-path in $W_k-R$, and we conclude $\sum_{e \in A(C)}{w(e)}=L-L_{uv}+L_{uv}=L$. This shows that $w$ is an arc-weighting of $W_k$ with positive integers in which all directed cycles have total arc-weight equal to a number $L \in \mathbb{N}$. This concludes the proof.
\end{proof}
\subsection{Digraphs of Bounded Directed Tree-Width}\label{sec:treewidth}
We start this subsection with the crucial definition of $k$-trains. These are digraphs containing many distinct cycle lengths, which are useful for finding disjoint directed cycles of distinct lengths.
\begin{definition}
Let $k \in \mathbb{N}$. A \emph{$k$-train} is a digraph consisting of a directed path $P$ with vertex-trace $u_0,u_1,\ldots,u_\ell$ directed from $u_0$ to $u_\ell$ together with $k$ arcs of the form $(u_\ell,u_{\ell_j}), j=1,\ldots,k$, where $0=\ell_1<\ell_2<\cdots<\ell_k<\ell$. 
\end{definition}
Note that every $k$-train is strongly connected and contains $k$ directed cycles of pairwise distinct lengths, namely the cycles through the arcs $(u_\ell,u_{\ell_j}), j=1,\ldots,k$. This fact has the following useful consequence.
\begin{observation}\label{obs:disjointktrains}
If a digraph $D$ contains $k$ vertex-disjoint $k$-trains as subdigraphs, then $D$ contains $k$ vertex-disjoint directed cycles of pairwise distinct lengths.
\end{observation}
\begin{proof}
Let $D_1,\ldots,D_k \subseteq D$ be vertex-disjoint $k$-trains. For $i=1,2,3,\ldots,k$ we successively pick a directed cycle $C_i$ in $D_i$ whose length is distinct from the lengths of the already chosen directed cycles $C_1,\ldots,C_{i-1}$, which is possible since $D_i$ contains directed cycles of $k$ different lengths. Eventually the process returns $k$ vertex-disjoint directed cycles of pairwise distinct lengths.
\end{proof}
The next observation shows that $k$-trains exist in digraphs of large out-degree.
\begin{observation}\label{obs:existsktrain}
Let $D$ be a digraph with $\delta^+(D) \ge k$. Then $D$ has a $k$-train as a subdigraph.
\end{observation}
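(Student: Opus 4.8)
The statement to prove is Observation~\ref{obs:existsktrain}: every digraph $D$ with $\delta^+(D) \ge k$ contains a $k$-train as a subdigraph.

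\medskip

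The plan is to build the spine of the train greedily by following out-arcs, and then harvest the back-arcs once the spine is long enough. Concretely, I would start at an arbitrary vertex $u_0$ and construct a directed path $u_0, u_1, u_2, \ldots$ by repeatedly choosing, at the current endpoint $u_i$, an out-neighbor not yet on the path. Since $\delta^+(D) \ge k$, the vertex $u_i$ has at least $k$ out-neighbors, so as long as fewer than $k$ of them already lie on the path, we can extend. This process must terminate (the digraph is finite), and it can only stop at a vertex $u_\ell$ all of whose at-least-$k$ out-neighbors lie on the path $P = u_0 u_1 \cdots u_\ell$ already constructed. In particular $\ell \ge k$, and $u_\ell$ has $k$ distinct out-neighbors $u_{\ell_1}, u_{\ell_2}, \ldots, u_{\ell_k}$ on $P$ with $\ell_1 < \ell_2 < \cdots < \ell_k$.

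\medskip

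The only mismatch with the definition of a $k$-train is that the definition insists $\ell_1 = 0$ and $\ell_k < \ell$, whereas the greedy argument merely gives $k$ back-arcs $(u_\ell, u_{\ell_j})$ to distinct earlier vertices. To fix the lower end, note that if $\ell_1 > 0$ we may simply discard the initial segment $u_0, \ldots, u_{\ell_1 - 1}$ of $P$ and relabel, so that the path now starts at (the old) $u_{\ell_1}$; this only shifts indices and does not affect the out-arcs from $u_\ell$. For the upper end, observe $u_\ell$ cannot have an out-arc to itself (loopless), and if some $u_{\ell_j} = u_\ell$ this is impossible for the same reason, so in fact every $\ell_j < \ell$ automatically; after the relabelling we get exactly $0 = \ell_1 < \ell_2 < \cdots < \ell_k < \ell$, as required. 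Taking $P$ together with the $k$ arcs $(u_\ell, u_{\ell_j})$ yields the desired $k$-train.

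\medskip

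I do not expect any real obstacle here — this is a short greedy argument. The one point deserving a line of care is the termination/stopping condition: one must argue that the greedy extension halts at a vertex whose out-neighbors are \emph{all} already on the path (rather than, say, running out of vertices some other way), which is immediate since at each step we either extend the path by a new vertex or we are stuck, and the number of vertices is finite. A minor bookkeeping point is the index-shifting needed to meet the $\ell_1 = 0$ normalization in the definition, but this is cosmetic.
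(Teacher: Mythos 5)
Your proof is correct and is essentially the paper's argument: the paper simply takes a longest directed path $P$ and notes that the end-vertex's out-neighbors all lie on $P$, whereas you build a maximal (unextendable) path greedily, which yields the same property. The trimming to meet the $\ell_1=0$ normalization, which you spell out, is left implicit in the paper's one-line proof but is the same bookkeeping step.
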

\begin{proof}
Let $P$ be the longest directed path in $D$. Then $P$ together with the arcs leaving its end-vertex $u$ contains a $k$-train, as all the out-neighbors of $u$ are contained in $V(P)$.
\end{proof}

Observation~\ref{obs:disjointktrains} motivates studying packings of disjoint $k$-trains in digraphs. In this spirit, Lemma~\ref{lemma:erdosposaboundedwidth} below shows that within digraphs of bounded directed tree-width, $k$-trains have the so-called \emph{Erd\H{o}s-P\'{o}sa} property, i.e., a qualitative relation between the maximum number of disjoint $k$-trains within such a digraph and the smallest size of a vertex-set hitting all $k$-trains. For the proof we need a result from~\cite{erdosposa}. In the following, given a directed tree-decomposition $(T,\beta,\gamma)$ of a digraph, we use the notation $$\beta(\ge t):=\bigcup_{t' \in V(T), t' \ge t}{\beta(t')}$$ for the set of the vertices of $D$ contained in the bags of the sub-arborescence of $T$ rooted at $t$.
\begin{lemma}[cf. Lemma 3.6 in~\cite{erdosposa}]\label{lemma:intersect}
Let $(T,\beta,\gamma)$ be a directed tree-decomposition of a digraph $D$ and let $H$ be a strongly connected subdigraph of $D$. Let $r$ be the root of $R$, and let $t^\ast \in V(T)$ be a node in $T$ of maximal distance from $r$ in $T$ such that $\beta(\ge t^\ast) \supseteq V(H)$. Then for every $t \in V(T)$ with $t \ge t^\ast$, we have $\Gamma(t) \cap V(H) \neq \emptyset$.
\end{lemma}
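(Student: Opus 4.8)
\textbf{Proof plan for Lemma~\ref{lemma:intersect}.}

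The plan is to argue by contradiction and exploit the maximality of $t^\ast$ together with the $\gamma$-normality condition (ii) in the definition of a directed tree decomposition. So suppose there is some $t \in V(T)$ with $t \ge t^\ast$ and $\Gamma(t) \cap V(H) = \emptyset$. Since $\{\beta(t') : t' \in V(T)\}$ partitions $V(D)$ and $V(H) \subseteq \beta(\ge t^\ast)$, the set $V(H)$ is distributed among the bags of the sub-arborescence rooted at $t^\ast$. The first step is to observe that because $\Gamma(t) \supseteq \beta(t)$, the assumption $\Gamma(t) \cap V(H) = \emptyset$ forces $\beta(t) \cap V(H) = \emptyset$, and moreover $\gamma(e) \cap V(H) = \emptyset$ for every arc $e$ of $T$ incident with $t$; in particular this holds for the arc $e_0$ entering $t$ (if $t \ne t^\ast$) and for every arc leaving $t$.

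The key step is then to use condition (ii) applied to the arcs leaving $t$. Partition $V(H)$ according to which sub-arborescence it lands in: since $V(H) \subseteq \beta(\ge t^\ast)$ but $V(H)$ is disjoint from $\beta(t')$ for every $t'$ on the path from $t^\ast$ to $t$ inclusive (for the intermediate nodes this needs a small additional argument, so in fact I would first reduce to the case where $t$ is a child-successor chain and handle it by downward induction along the path from $t^\ast$ to $t$), the vertices of $H$ lie in the bags $\beta(\ge t_i)$ where $t_1, \ldots, t_m$ are the children of $t$. If $V(H)$ met $\beta(\ge t_i)$ for at least two distinct children, say $t_i$ and $t_j$, then since $H$ is strongly connected there is a directed walk in $H$ — hence in $D$, hence in $D - \gamma(e_i)$ because $\gamma(e_i) \cap V(H) = \emptyset$ — starting and ending inside $\beta(\ge t_i)$ but passing through a vertex of $V(H) \setminus \beta(\ge t_i)$; this contradicts $\beta(\ge t_i) = \bigcup\{\beta(t') : e_i < t'\}$ being $\gamma(e_i)$-normal, where $e_i$ is the arc from $t$ to $t_i$. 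Therefore all of $V(H)$ lies in $\beta(\ge t_i)$ for a single child $t_i$ of $t$. But then $\beta(\ge t_i) \supseteq V(H)$ with $t_i$ strictly farther from $r$ than $t \ge t^\ast$, contradicting the maximality in the choice of $t^\ast$.

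I expect the main obstacle to be the bookkeeping for nodes strictly between $t^\ast$ and $t$: the statement to prove is not just about children of $t^\ast$ but about all $t \ge t^\ast$, so one must show that none of the intermediate bags $\beta(t')$ on the $t^\ast$–$t$ path meets $V(H)$ either. This is handled by the same normality argument applied one edge at a time: walking down from $t^\ast$, at each step the current node's $\Gamma$-set must meet $V(H)$ (otherwise the argument of the previous paragraph pushes $t^\ast$ further down), so in fact the cleanest formulation is to prove the contrapositive of the whole statement by showing that the set $\{t : \beta(\ge t) \supseteq V(H)\}$ together with $\{t : \Gamma(t) \cap V(H) \ne \emptyset\}$ forms a connected "downward-closed then path" region of $T$, with $t^\ast$ as its deepest point where the containment still holds and every node at or below $t^\ast$ (up to the point where $V(H)$ splits among children) having $\Gamma(t) \cap V(H) \ne \emptyset$. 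Once the single-child funnelling is established via $\gamma$-normality, everything else is routine, and I would cite Lemma~3.6 of~\cite{erdosposa} for the precise argument if the details become lengthy.
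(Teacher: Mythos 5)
The paper itself cites this lemma from prior work without reproving it, so your attempt must stand on its own, and there is a genuine gap in how you treat nodes $t$ with $t>t^\ast$. Your funnelling argument --- $V(H)$ cannot meet two distinct child sub-arborescences of $t$ because of $\gamma$-normality, so it must lie entirely in $\beta(\ge t_i)$ for a single child $t_i$, contradicting the maximality of $t^\ast$ --- is only available once you know $V(H)\subseteq\beta(\ge t)$. That holds at $t=t^\ast$, but by the very maximality you invoke it \emph{fails} for every $t>t^\ast$: part of $V(H)$ sits above $t$ or in branches hanging off the $t^\ast$--$t$ path, so the vertices of $H$ do not distribute among the child subtrees of $t$ at all, and the sentence ``Therefore all of $V(H)$ lies in $\beta(\ge t_i)$ for a single child $t_i$ of $t$'' has no justification. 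Your proposed remedy (downward induction along the path) does not close the gap, since each step would again need $V(H)\subseteq\beta(\ge t_i)$, which maximality rules out for $t_i>t^\ast$.

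The fix is to look at the arc \emph{entering} $t$ rather than the arcs leaving it. For $t>t^\ast$ with $\beta(\ge t)\cap V(H)\neq\emptyset$, maximality of $t^\ast$ gives $V(H)\not\subseteq\beta(\ge t)$, so $H$ has vertices on both sides of the cut; strong connectivity of $H$ then gives a closed directed walk in $H$ that starts and ends in $\beta(\ge t)$ but leaves it, and $\gamma(e)$-normality of $\beta(\ge t)$ for the arc $e$ entering $t$ forces this walk to hit $\gamma(e)$. Since the walk lies inside $H$, we get $\gamma(e)\cap V(H)\neq\emptyset$, hence $\Gamma(t)\cap V(H)\neq\emptyset$. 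Your child-subtree argument is really this same normality-plus-strong-connectivity idea applied one level up, at the arcs from $t^\ast$ to its children where $V(H)\subseteq\beta(\ge t^\ast)$ is available; pivoting from leaving arcs to the entering arc as soon as one steps below $t^\ast$ makes the proof short and uniform. One more caveat: as literally stated the lemma tacitly needs $\beta(\ge t)\cap V(H)\neq\emptyset$ --- a ``dead'' sub-arborescence below $t^\ast$ whose bags and guards avoid $V(H)$ gives a counterexample otherwise --- but this hypothesis always holds in the paper's application, so the imprecision is harmless.
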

\begin{lemma}\label{lemma:erdosposaboundedwidth}
Let $k, d \in \mathbb{N}$. The following holds for every digraph $D$ with $\dtw(D) \le d$:

For every $\ell \in \mathbb{N}$, $D$ contains $\ell$ vertex-disjoint $k$-trains as subdigraphs, or there is a subset $X \subseteq V(D)$ with $|X| \le (d+1)(\ell-1)$ such that $D-X$ contains no $k$-train.
\end{lemma}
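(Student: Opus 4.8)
The plan is to argue by induction on $\ell$, exploiting a directed tree-decomposition of $D$ together with Lemma~\ref{lemma:intersect} (applicable because every $k$-train is strongly connected). The base case $\ell=1$ is immediate: either $D$ contains a $k$-train, or the empty set, of size $0=(d+1)(\ell-1)$, hits all $k$-trains of $D$.

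For the inductive step, fix a directed tree-decomposition $(T,\beta,\gamma)$ of $D$ of width at most $d$, so $|\Gamma(t)|\le d+1$ for every $t\in V(T)$. If $D$ has no $k$-train we finish with $X=\emptyset$, so assume it has one. For a $k$-train $H\subseteq D$, let $t_H\in V(T)$ be a node of maximal distance from the root with $\beta(\ge t_H)\supseteq V(H)$; such a node exists since the root always qualifies, and Lemma~\ref{lemma:intersect} applies to it. Among all $k$-trains of $D$, I would choose $H_0$ so that $t^{*}:=t_{H_0}$ has maximal distance from the root, and set $X':=\Gamma(t^{*})$, so $|X'|\le d+1$.

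The heart of the proof is the following claim: every $k$-train $H$ in $D$ with $V(H)\cap X'=\emptyset$ satisfies $V(H)\subseteq V(D)\setminus\beta(\ge t^{*})$. To prove it, suppose some such $H$ contains a vertex $v\in\beta(\ge t^{*})$. If $t^{*}$ is not the root, let $e^{*}$ be the arc of $T$ entering $t^{*}$; by property (ii), $\beta(\ge t^{*})=\bigcup\{\beta(t):e^{*}<t\}$ is $\gamma(e^{*})$-normal, and since $\gamma(e^{*})\subseteq\Gamma(t^{*})=X'$ is disjoint from $V(H)$, for any other $w\in V(H)$ a closed walk $v\to\cdots\to w\to\cdots\to v$ inside the strongly connected $H$ is a closed directed walk in $D-\gamma(e^{*})$ starting and ending in $\beta(\ge t^{*})$; normality forces $w\in\beta(\ge t^{*})$, hence $V(H)\subseteq\beta(\ge t^{*})$. (If $t^{*}$ is the root, then $V(H)\subseteq V(D)=\beta(\ge t^{*})$ trivially.) Now Lemma~\ref{lemma:intersect} yields $t_H\ge t^{*}$: if $t_H=t^{*}$ then $\Gamma(t^{*})\cap V(H)\neq\emptyset$, contradicting $V(H)\cap X'=\emptyset$; if $t_H$ is strictly deeper than $t^{*}$, this contradicts the maximality in the choice of $t^{*}$. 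Either way we reach a contradiction, which proves the claim. In particular, if $t^{*}$ is the root, the claim says $D-X'$ has no $k$-train at all.

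Finally, apply the induction hypothesis to $D-X'$, which has directed tree-width at most $d$ (restrict the decomposition to $V(D)\setminus X'$, contracting any empty bags), with parameter $\ell-1$. In the first outcome, $D-X'$ contains $\ell-1$ vertex-disjoint $k$-trains $H_1,\dots,H_{\ell-1}$; each avoids $X'$, so the claim places every $V(H_i)$ in $V(D)\setminus\beta(\ge t^{*})$, which is disjoint from $\beta(\ge t^{*})\supseteq V(H_0)$ since the bags partition $V(D)$; hence $H_0,H_1,\dots,H_{\ell-1}$ are $\ell$ vertex-disjoint $k$-trains in $D$. In the second outcome, there is $X''\subseteq V(D)\setminus X'$ with $|X''|\le(d+1)(\ell-2)$ such that $D-X'-X''$ has no $k$-train, and then $X:=X'\cup X''$ satisfies $|X|\le(d+1)(\ell-1)$ and $D-X$ has no $k$-train. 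This closes the induction. I expect the key claim to be the only genuine obstacle: it is exactly where the normality axiom of directed tree-decompositions meets the extremal choice of $t^{*}$, while the rest is bookkeeping with the partition property and the additive size bound.
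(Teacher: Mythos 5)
Your argument is correct and is, at heart, the same Erd\H{o}s--P\'{o}sa-style induction the paper uses: fix a directed tree-decomposition, make an extremal choice of a node deep in $T$ relative to the $k$-trains it can still accommodate, invoke Lemma~\ref{lemma:intersect} to force a hit inside the corresponding $\Gamma(\cdot)$, and recurse. The technical packaging differs, though, and the difference is worth noting. The paper picks $t_0$ as the deepest node for which $D[\beta(\ge t_0)]$ still contains a $k$-train, recurses on $D-\beta(\ge t_0)$ (i.e.\ deletes the entire cone of bags below $t_0$), and then proves that $X:=X'\cup\Gamma(t_0)$ hits everything by a three-way case split on how $t_0$ compares with $t_H$. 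You instead delete only $X'=\Gamma(t^*)$ and recurse on $D-\Gamma(t^*)$, replacing the case split by an up-front claim that every $k$-train avoiding $\Gamma(t^*)$ lies entirely outside $\beta(\ge t^*)$, which you prove by appealing directly to the $\gamma(e^*)$-normality of $\beta(\ge t^*)$ before wheeling in maximality and Lemma~\ref{lemma:intersect}. Both variants are sound; yours makes the role of normality more explicit and gives a slightly sharper structural invariant, while the paper's avoids having to re-establish that invariant and keeps the normality usage buried inside the cited Lemma~\ref{lemma:intersect}. One small inaccuracy in your write-up: the inequality $t_H\ge t^*$ does not come from Lemma~\ref{lemma:intersect}; it follows from $V(H)\subseteq\beta(\ge t^*)$ together with the fact that the nodes $t$ with $\beta(\ge t)\supseteq V(H)$ form a chain (incomparable subtrees have disjoint $\beta(\ge\cdot)$ sets, since the $\beta(t)$ partition $V(D)$), so that $t_H$, being the deepest such node, sits above or equal to $t^*$. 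Lemma~\ref{lemma:intersect} enters only afterwards, in the subcase $t_H=t^*$, to give $\Gamma(t^*)\cap V(H)\neq\emptyset$.
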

\begin{proof}
Let us proof the claim by induction on $\ell$. If $\ell=1$ the claim holds trivially. Now suppose that $\ell \ge 2$ and the claim has been proved for $\ell-1$. Let $(T,\beta,\gamma)$ be a directed tree-decomposition of $D$ of width at most $d$. We may assume w.l.o.g.~that $D$ contains at least one $k$-train as a subdigraph, for otherwise the claim holds trivially. Since the sets $\beta(t), t \in V(T)$ partition $V(D)$, there exists at least one $t \in V(T)$ such that $D[\beta(\ge t)]$ contains a $k$-train. Let $t_0 \in V(T)$ be a vertex with this property maximizing the distance to the root in $T$. Let $D':=D-\beta(\ge t_0)$. Since $\dtw(\cdot)$ is monotone under taking subgraphs, we have $\dtw(D') \le \dtw(D) \le d$. By the induction hypothesis, there exist $\ell-1$ disjoint $k$-trains in $D'$, or we can hit all $k$-trains contained in $D'$ by a set $X' \subseteq V(D')$ with $|X'| \le (d+1)(\ell-2)$. In the first case, we can simply join a collection of $\ell-1$ vertex-disjoint $k$-trains in $D'$ by an (arbitrarily chosen) $k$-train contained in $D[\beta(\ge t_0)]$ to obtain a collection of $\ell$ vertex-disjoint $k$-trains in $D$, which in this case concludes the proof. So we may assume that we are in the second case and that the set $X'$ with the stated property exists. We now define $X:=X' \cup \Gamma(t_0) \subseteq V(D)$, noting that $|X| \le |X'|+|\Gamma(t_0)| \le (d+1)(\ell-2)+(d+1)=(d+1)(\ell-1)$, since $(T,\beta,\gamma)$ has width at most $d$. We claim that $D-X$ contains no $k$-trains as subdigraphs, which if true, proves the inductive claim. Suppose towards a contradiction that there exists a $k$-train $H$ in $D$ such that $V(H) \cap X=\emptyset$. Since $D'-X'$ contains no $k$-trains, we must have $V(H) \cap \beta(\ge t_0) \neq \emptyset$. Let $t^\ast \in V(T)$ be such that $\beta(\ge t^\ast) \supseteq V(H)$ and such that the distance of the root to $t^\ast$ in $T$ is maximized. There are three possible cases concerning the relationship of $t_0$ and $t^\ast$ in $T$: (1) $t_0$ and $t^\ast$ are incomparable, (2) $t_0<t^\ast$ and (3) $t_0 \ge t^\ast$. In case (1), we have that $\beta(\ge t_0)$ and $\beta(\ge t^\ast)$ are disjoint, yielding a contradiction since $V(H) \cap \beta(\ge t_0) \neq \emptyset$ and $V(H) \subseteq \beta(\ge t^\ast)$. In case (2), $t^\ast$ is a vertex in $V(T)$ which is further from the root than $t_0$, but still $\beta(\ge t^\ast)$ contains a $k$-train (namely $H$). This yields a contradiction to the definition of $t_0$. In case (3), noting that $H$ is a strongly connected subdigraph of $D$, we can apply Lemma~\ref{lemma:intersect} to find that $\Gamma(t_0) \cap V(H) \neq \emptyset$. This contradicts the facts $V(H) \cap X=\emptyset$ and $\Gamma(t_0) \subseteq X$. Since all three cases lead to a contradiction, our assumption was wrong, $X$ indeed hits all the $k$-trains in $D$. This concludes the inductive proof of the claim.
\end{proof}
From the above Lemma, we can now conclude that Lichiardopol's conjecture holds for digraphs of bounded directed tree-width and give the proof of Proposition~\ref{prop:boundedwidth}.

\begin{proof}[Proof of Proposition~\ref{prop:boundedwidth}]
Let $D$ be any given digraph such that $\delta^+(D) > (d+2)(k-1)$ and $\dtw(D) \le d$. We claim that $D$ must contain $k$ vertex-disjoint $k$-trains. Towards a contradiction suppose not, then by Lemma~\ref{lemma:erdosposaboundedwidth} there exists $X \subseteq V(D)$ with $|X| \le (d+1)(k-1)$ such that $D-X$ contains no $k$-train as a subdigraph. However, we have $\delta^+(D-X) \ge \delta^+(D)-|X|$ $\ge (d+2)(k-1)+1-(d+1)(k-1)=k$. Therefore, by Observation~\ref{obs:existsktrain} $D-X$ must contain a $k$-train, a contradiction.
Hence, $D$ indeed contains $k$ vertex-disjoint $k$-trains. The claim now follows from Observation~\ref{obs:disjointktrains}.
\end{proof}
\subsection{Digraphs Containing a Large Complete Minor}\label{sec:kt}
In this subsection, we show that every digraph containing as a butterfly-minor a complete digraph of sufficiently large order must also contain $k$ disjoint directed cycles of distinct lengths. To achieve this goal, it is useful to show an arc-weighted generalization of this statement as follows.
\begin{lemma}\label{lemma:arcweights}
Let $k \in \mathbb{N}$, $t=\frac{k^2+3k}{2}$. Let $D$ be a digraph containing $\bivec{K}_t$ as a butterfly-minor, and let $w:A(D) \rightarrow \mathbb{R}_+$ be an arc-weighting of $D$. Then $D$ contains $k$ pairwise vertex-disjoint directed cycles $C_1,\ldots,C_k$ such that the total weights $w(C_i)=\sum_{e \in A(C_i)}{w(e)}$, $i=1,\ldots,k$ of the cycles are pairwise distinct.
\end{lemma}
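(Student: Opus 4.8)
The plan is to deduce the statement from a greedy selection argument fuelled by the following ``richness'' lemma, which I shall call the \emph{Key Lemma}: \emph{if a digraph $D'$ contains $\bivec{K}_m$ as a butterfly minor and $w':A(D')\to\mathbb{R}_+$, then $D'$ contains directed cycles realising at least $m-1$ pairwise distinct $w'$-weights.} To see that the Key Lemma suffices, first unpack the butterfly-minor model of $\bivec{K}_t$: it yields $t$ pairwise vertex-disjoint connected subdigraphs $H_1,\dots,H_t\subseteq D$ (the branch sets) together with connector arcs, playing the role of the vertices of $\bivec{K}_t$, so that for every $I\subseteq\{1,\dots,t\}$ the subdigraph $D_I$ spanned by $\bigcup_{i\in I}H_i$ and the connectors among those branch sets again contains $\bivec{K}_{|I|}$ as a butterfly minor, and $D_I,D_J$ are vertex-disjoint whenever $I\cap J=\emptyset$. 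Since $t=\tfrac{k^2+3k}{2}=2+3+\dots+(k+1)$, partition $\{1,\dots,t\}$ into blocks $I_1,\dots,I_k$ with $|I_j|=j+1$, and choose cycles greedily for $j=1,2,\dots,k$: having picked pairwise vertex-disjoint cycles $C_1,\dots,C_{j-1}$ with $C_i\subseteq D_{I_i}$, apply the Key Lemma to $D_{I_j}$ (which contains $\bivec{K}_{j+1}$ as a butterfly minor, hence has cycles of at least $j$ distinct $w$-weights) to pick $C_j\subseteq D_{I_j}$ whose weight avoids the at most $j-1$ values $w(C_1),\dots,w(C_{j-1})$. The cycles $C_1,\dots,C_k$ lie in distinct blocks, so they are pairwise vertex-disjoint, and by construction have pairwise distinct weights, as required.

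There are then two things to carry out. The first, more routine, is the extraction of the branch sets and connectors above together with a coherent, arc-additive bookkeeping of the weight of any cycle routed through a prescribed cyclic order of distinct branch sets. Because butterfly minors are strictly weaker than topological minors, one must check that the internal structure of the branch sets of the model does not obstruct such routings; this is handled via the standard characterisation of which subdigraphs butterfly-contract to a single vertex, and it must be done with enough care that the weight identities used below are legitimate.

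The second, and the real heart, is the Key Lemma, which is exactly where positivity of the weights is indispensable. I would argue by induction on $m$. For $m=2$ a single digon already has one (strictly positive) weight. For $m=3$, let $D_{12},D_{13},D_{23}$ be the three digons and $T,\overline{T}$ the two directed triangles on the three branch sets; since both sides traverse the same six connections, each once, one gets $w(D_{12})+w(D_{13})+w(D_{23})=w(T)+w(\overline{T})$, and if there were a single cycle-weight $\delta$ this would force $3\delta=2\delta$, hence $\delta=0$, contradicting that $\delta$ is the weight of a digon, a sum of positive arc-weights; so at least two distinct cycle-weights occur. For the step from $m-1$ to $m$: if the $\bivec{K}_{m-1}$-substructure on $m-1$ of the branch sets already realises at least $m-1$ distinct cycle-weights we are done, so assume it realises exactly $m-2$ values, forming a set $S$, and suppose for contradiction that adjoining the $m$-th branch set $H$ creates no new cycle-weight; then the $m-1$ digons through $H$ all have weight in $S$, so two of them coincide, and one feeds this coincidence, the digon--triangle identities on the triples through $H$, and the positivity of all arc-weights into an impossible relation of the form ``(strictly positive quantity) $=0$''.

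The step I expect to be the main obstacle is precisely this last deduction for general $m$: once $|S|=m-2\ge2$, the single triple-identity that settles $m=3$ no longer suffices, since more ``adversarial'' weightings survive it (for instance two weight-values in ratio $2:1$ are compatible with several of the naive identities). To force the contradiction uniformly one presumably has to work with a larger, carefully chosen family of routed cycles rather than only digons and triangles --- for example a nested family of cycles of lengths $2,3,\dots,m$ together with their reverses and the identities relating consecutive members --- while repeatedly exploiting that \emph{every} directed cycle has strictly positive weight.
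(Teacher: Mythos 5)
Your top-level reduction --- partition the $t$ branch sets of the $\bivec{K}_t$-model into blocks of sizes $2,3,\dots,k+1$, extract at least $j$ distinct cycle-weights from the $j$-th block (your ``Key Lemma'' with $m=j+1$), and greedily select one cycle per block --- is exactly the combinatorial skeleton of the paper's argument. But the proposal has a genuine gap precisely where you flag it: the Key Lemma itself is never proved. The digon--triangle identities settle $m=3$, but the inductive step ``exactly $m-2$ values on $\bivec{K}_{m-1}$, no new value after adjoining $H_m$ $\Rightarrow$ contradiction'' does not follow from those identities: two of the $m-1$ digons through $H_m$ coinciding only yields relations such as $w(T)+w(\overline{T})=2\delta+w(D_{12})$ with every term already lying in $S$, which is perfectly consistent when $|S|\ge 2$, and you never actually produce the promised relation ``(strictly positive quantity) $=0$.'' As written, the heart of the proof is open.

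The missing ingredient is elementary and does not require any algebra of cycle-weight identities. In $\bivec{K}_m$ with positive arc-weights, fix a vertex $v$, order the remaining vertices $v_1,\dots,v_{m-1}$ so that $w((v_1,v))\le w((v_2,v))\le\cdots\le w((v_{m-1},v))$, and consider the nested cycles $C_j=v\to v_1\to\cdots\to v_j\to v$ for $j=1,\dots,m-1$. Then $w(C_j)-w(C_{j-1})=w((v_{j-1},v_j))+\bigl(w((v_j,v))-w((v_{j-1},v))\bigr)>0$, so the $m-1$ weights are strictly increasing; positivity is used exactly once per increment, not through a global system of identities. This is the paper's base case and it replaces your Key Lemma wholesale. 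The other concern you raise --- routing weights coherently through a butterfly model so that ``the internal structure of the branch sets does not obstruct'' --- is real, and the paper avoids having to analyse models at all: it inducts on $|V(D)|+|A(D)|$, passing to a proper subdigraph containing a $\bivec{K}_t$-minor when possible, and otherwise butterfly-contracting a contractible arc $(u,v)$ while absorbing $w((u,v))$ into the weights of the arcs it creates; every cycle in the contracted digraph then lifts to a cycle of $D$ of the same total weight, so the conclusion lifts verbatim. I would rebuild the proof on that induction rather than on the Key Lemma.
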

\begin{proof}
By induction on $v(D)+a(D)$. We clearly have $v(D) \ge t$ and $a(D) \ge t(t-1)$ for every $D$ as in the lemma, so in the base case we have $v(D)+a(D)=t^2$ and $D=\bivec{K}_t$. Since $t=2+3+\cdots+k+(k+1)$, we can partition $V(D)$ into subsets $V_1,\ldots,V_k$, such that $|V_i|=i+1$ for $1 \le i \le k$. We claim that $D[V_i]$ contains at least $i$ directed cycles with pairwise distinct total weights, for every $i \in \{1,\ldots,k\}$. Indeed, for $1 \le i \le k$ let us pick some vertex $v_i \in V_i$ and order the vertices in $V_i\setminus \{v_i\}$ as $v_{i,1},\ldots,v_{i,i}$ in such a way that $$w((v_{i,1},v_i)) \le w((v_{i,2},v_i)) \le w((v_{i,3},v_i)) \le \ldots \le w((v_{i,i},v_i)).$$ For $1 \le j \le i$, let us denote by $C_{i,j}$ the directed cycle in $D[V_i]$ induced by the arcs $(v_i,v_{i,1}),$ $(v_{i,1},v_{i,2}),$ $\ldots,$ $(v_{i,j-1},v_{i,j}),$ $(v_{i,j},v_i)$. We then have $$w(C_{i,j})-w(C_{i,j-1})=\underbrace{w((v_{i,j-1},v_{i,j}))}_{>0}+\underbrace{(w((v_{i,j},v_i))-w((v_{i,j-1},v_i)))}_{\ge 0}>0$$ for $2 \le j \le i$, showing that $C_{i,1},\ldots,C_{i,i}$ are $i$ directed cycles in $D[V_i]$ of pairwise distinct total weights. For $i=1,2,\ldots,k$ we can now successively pick a cycle $C_i \in \{C_{i,j}|1 \le j \le i\}$ whose weight is distinct from the weights of the already chosen cycles $C_1,\ldots,C_{i-1}$. This proves the claim in the base case of the induction.

Suppose now that $v(D)+a(D)>t^2$ and that the claim holds for all digraphs $D'$ such that $v(D')+a(D')<v(D)+a(D)$. Then either there is a proper subdigraph $D_1 \subsetneq D$ such that $D_1$ still contains $\bivec{K_t}$ as a butterfly-minor, or $D$ contains a contractible arc $e=(u,v) \in A(D)$ such that $D_2:=D/e$ contains $\bivec{K_t}$ as a butterfly-minor. In the first case, we can apply the induction hypothesis to $D_1$ (with arc-weights inherited from $D$) to find that there exists $k$ vertex-disjoint directed cycles in $D_1$ (and hence also in $D$) of pairwise distinct weights. In the second case, we know that either $d_D^+(u)=1$ or $d_D^-(v)=1$. Let us assume w.l.o.g. that $e$ is the only arc leaving $u$, as the proof in the other case works symmetrically. Then $V(D_2)=V(D)\setminus \{u\}$ and $A(D_2)=A(D-u) \cup \{(x,v)|(x,u) \in A(D), x \neq v\}$. Let us define an arc-weighting $w_2:A(D_2) \rightarrow \mathbb{R}_+$ by $w_2(e):=w(e)$ for all $e \in A(D-u)$ and $w_2((x,v)):=w((x,u))+w((u,v))$ for all $x \in N_D^-(u)\setminus \{v\}$. Since $v(D_2)+a(D_2)<v(D)+a(D)$, the induction hypothesis yields that $D_2$ contains $k$ vertex-disjoint directed cycles $C_1,\ldots,C_k$ with pairwise distinct total weights. If none of these cycles uses an arc of the form $(x,v)$ with $x \in N_D^-(u)\setminus \{v\}$, then the cycles also exist in $D$ with the same weights, and hence the inductive claim holds. Otherwise, exactly one of the cycles, say $C_1$, uses an arc of the from $(x,v)$ with $x \in N_D^-(u)\setminus \{v\}$. But then replacing the arc $(x,v)$ on $C_1$ with the path $(x,u),(u,v)$ in $D$, we find a directed cycle $C_1'$ in $D$ such that $V(C_1')=V(C_1) \cup \{u\}$, and such that the weight of $C_1$ in $(D_2,w_2)$ equals the weight of $C_1'$ in $(D,w)$. Hence, $C_1',C_2,\ldots,C_k$ are $k$ vertex-disjoint directed cycles in $D$ with pairwise distinct weights also in $D$, again proving the inductive claim. This concludes the proof by induction.
\end{proof}
By putting $w(e):=1$ for all $e \in A(D)$ in Lemma~\ref{lemma:arcweights}, we obtain the following.
\begin{corollary}\label{cor:kt}
Let $D$ be a digraph containing $\bivec{K}_t$ as a butterfly-minor, where $t \ge \frac{k^2+3k}{2}$. Then $D$ contains $k$ vertex-disjoint directed cycles of pairwise distinct lengths.
\end{corollary}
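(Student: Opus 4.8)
The plan is to derive Corollary~\ref{cor:kt} as an immediate specialization of Lemma~\ref{lemma:arcweights} to the constant arc-weighting. First I would record a trivial integrality remark: writing $t_0 := \frac{k^2+3k}{2} = \frac{k(k+3)}{2}$, exactly one of $k$ and $k+3$ is even, so $k(k+3)$ is even and $t_0 \in \mathbb{N}$, which is what makes the condition ``$D$ contains $\bivec{K}_{t_0}$ as a butterfly-minor'' meaningful.

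Next I would reduce to the case $t = t_0$. If $t \ge t_0$, then $\bivec{K}_{t_0}$ is a subdigraph of $\bivec{K}_t$, hence a butterfly-minor of $\bivec{K}_t$; since the butterfly-minor relation is transitive (being generated by arc-deletions, vertex-deletions and contractions of contractible arcs, a butterfly-minor of a butterfly-minor is a butterfly-minor), $\bivec{K}_{t_0}$ is a butterfly-minor of $D$. So we may assume $D$ contains $\bivec{K}_{t_0}$ as a butterfly-minor, and Lemma~\ref{lemma:arcweights} then applies verbatim with parameter $k$.

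Finally, I would apply Lemma~\ref{lemma:arcweights} with the arc-weighting $w : A(D) \to \mathbb{R}_+$ given by $w(e) := 1$ for all $e \in A(D)$. It yields $k$ pairwise vertex-disjoint directed cycles $C_1, \dots, C_k$ in $D$ whose total weights $w(C_i) = \sum_{e \in A(C_i)} w(e)$ are pairwise distinct; but under the unit weighting $w(C_i) = |A(C_i)| = |C_i|$, so the lengths $|C_1|, \dots, |C_k|$ are pairwise distinct, which is exactly the asserted conclusion. I do not expect any real obstacle here: all the substance already resides in the inductive proof of Lemma~\ref{lemma:arcweights}, which was set up precisely to accommodate an arbitrary nonnegative arc-weighting, and the only thing needing a word of justification is the harmless passage from $t \ge t_0$ to $t = t_0$ via transitivity of butterfly-minors.
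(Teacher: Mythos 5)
Your proposal is correct and follows exactly the paper's route: the paper derives the corollary by applying Lemma~\ref{lemma:arcweights} with the unit weighting $w \equiv 1$. Your extra remarks—the integrality of $\tfrac{k^2+3k}{2}$ and the reduction from $t \ge \tfrac{k^2+3k}{2}$ to $t = \tfrac{k^2+3k}{2}$ via monotonicity and transitivity of the butterfly-minor relation—are harmless and sound, just making explicit a step the paper leaves implicit.
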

\subsection{Digraphs Containing a Flat Wall}\label{sec:flatwall}
In this subsection we give conditions which guarantee many vertex-disjoint directed cycles of distinct lengths in digraphs of large minimum degree containing a weakly flat wall of large order. Let us start by introducing a useful notation: Let $D$ be a digraph, and let $W \subseteq D$ be a cylindrical wall. For a vertex $w \in \textbf{int}(W)$, in the rest of this section we denote by $R_W^+[w]$ and $R_W^-[w]$ the sets of vertices in $V(D)\setminus \textbf{int}(W)$ which are reachable from $w$ in $D-(\textbf{int}(W) \setminus \{w\})$, respectively which can reach $w$ in $D-(\textbf{int}(W) \setminus \{w\})$.

\begin{lemma}\label{lemma:in or out}
Let $D$ be a digraph containing a cylindrical wall $W \subseteq D$. Let $a,b \in \mathbb{N}$ such that $\delta^+(D) \ge a+b$, and let $w \in \textbf{int}(W)$. Then at least one of the following holds.
\begin{itemize}
\item There exists $x \in V(D)$, a directed path $P$ in $D$ from $w$ to $x$ such that $V(P) \cap \textbf{int}(W)=\{w\}$ and distinct vertices $v_1,\ldots,v_a \in \textbf{int}(W)\setminus \{w\}$ such that $(x,v_i) \in A(D), i=1,\ldots,a$, or
\item $D[R_W^+[w]]$ has minimum out-degree at least $b$.
\end{itemize}
\end{lemma}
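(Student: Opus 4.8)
The plan is to prove the dichotomy directly: I will assume the first alternative fails and deduce that $D[R_W^+[w]]$ has minimum out-degree at least $b$. The engine of the argument is a closure property of $R_W^+[w]$: \emph{if $y \in R_W^+[w]$ and $(y,z) \in A(D)$ with $z \notin \textbf{int}(W)$, then $z \in R_W^+[w]$}. Indeed, by definition there is a $w$-$y$-dipath $Q$ in $D - (\textbf{int}(W)\setminus\{w\})$, and since both $y$ and $z$ lie outside $\textbf{int}(W)\setminus\{w\}$, the arc $(y,z)$ also belongs to $D - (\textbf{int}(W)\setminus\{w\})$; hence $Q \circ (y,z)$ is a walk from $w$ to $z$ in this subdigraph, so $z$ is reachable from $w$ there, and as $z \notin \textbf{int}(W)$ we get $z \in R_W^+[w]$. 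In particular, for every $y \in R_W^+[w]$ the set $N_D^+(y)$ is the disjoint union of $N_D^+(y) \cap R_W^+[w]$ and $N_D^+(y) \cap \textbf{int}(W)$.

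Next I fix an arbitrary $y \in R_W^+[w]$ --- note $y \neq w$, since $w \in \textbf{int}(W)$ whereas $R_W^+[w] \cap \textbf{int}(W) = \emptyset$ --- and let $Q$ be a $w$-$y$-dipath in $D - (\textbf{int}(W)\setminus\{w\})$; since $Q$ avoids $\textbf{int}(W)\setminus\{w\}$ and starts at $w \in \textbf{int}(W)$, we have $V(Q) \cap \textbf{int}(W) = \{w\}$. If $y$ had at least $a$ out-neighbours $v_1,\dots,v_a$ in $\textbf{int}(W)\setminus\{w\}$, then $x := y$, $P := Q$, and $v_1,\dots,v_a$ would witness the first alternative. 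As this is excluded by assumption, every $y \in R_W^+[w]$ has at most $a-1$ out-neighbours in $\textbf{int}(W)\setminus\{w\}$, hence at most $a$ out-neighbours in $\textbf{int}(W)$ in total (the extra one allowing for $w$ itself being an out-neighbour of $y$).

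Putting the two together, for every $y \in R_W^+[w]$,
\[
d^+_{D[R_W^+[w]]}(y) = \bigl|N_D^+(y) \cap R_W^+[w]\bigr| = d_D^+(y) - \bigl|N_D^+(y) \cap \textbf{int}(W)\bigr| \ge (a+b) - a = b,
\]
using $d_D^+(y) \ge \delta^+(D) \ge a+b$; thus $D[R_W^+[w]]$ has minimum out-degree at least $b$ (trivially so if $R_W^+[w] = \emptyset$), which is the second alternative.

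I do not expect a genuine obstacle here --- the argument is essentially a degree count --- but two points need care: (i) checking that appending an arc to a dipath avoiding $\textbf{int}(W)\setminus\{w\}$ again avoids it, which is what makes the closure property of $R_W^+[w]$ work; and (ii) the bookkeeping for $w$ itself, which may be an out-neighbour of $y$ but lies in neither $R_W^+[w]$ nor $\textbf{int}(W)\setminus\{w\}$. This is exactly why the number of out-neighbours of $y$ in $\textbf{int}(W)$ is bounded by $a$ rather than $a-1$, and correspondingly why the hypothesis reads $\delta^+(D) \ge a+b$ rather than $\delta^+(D) \ge a+b-1$.
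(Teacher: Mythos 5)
Your proof is correct and uses essentially the same degree-counting argument as the paper, just organized in the opposite direction of the dichotomy (you assume the first alternative fails and derive the second, the paper assumes the second fails and derives the first); the handling of the empty case $R_W^+[w]=\emptyset$ also differs cosmetically (you invoke the second alternative vacuously, the paper takes $x=w$ for the first), but this is immaterial.
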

\begin{proof}
If $R_W^+[w]=\emptyset$, then putting $x=w$ verifies the first item. Now suppose $R_W^+[w] \neq \emptyset$.
By definition of $R_W^+[w]$, for every vertex $x \in R_W^+[w]$, we have $N_D^+(x) \setminus \textbf{int}(W) \subseteq R_W^+[w]$. Now suppose that $D[R_W^+[w]]$ has minimum out-degree less than $b$. Then there exists $x \in R_W^+[w]$ such that $$b>d^+_{D[R_W^+[w]]}(x)=|N_D^+(x) \setminus \textbf{int}(W)|$$ $$=d_D^+(x)-|N_D^+(x) \cap \textbf{int}(W)| \ge a+b-|N_D^+(x) \cap \textbf{int}(W)|.$$ Consequently, $|N_D^+(x) \cap (\textbf{int}(W) \setminus \{w\})| \ge |N_D^+(x) \cap \textbf{int}(W)|-1 \ge a$, and we find that $x$ has $a$ distinct out-neighbors $v_1,\ldots,v_a \in \textbf{int}(W) \setminus \{w\}$. Since $x \in R_W^+[w]$, there exists a dipath $P$ from $w$ to $x$ such that $V(P) \cap \textbf{int}(W)=\{w\}$, and the claim follows.
\end{proof}
To state our next lemma we need the following definition.
\begin{definition}
Let $W$ be a cylindrical wall. Let $G_W$ denote the auxiliary undirected graph on the vertex-set $\textbf{int}(W)$ in which two vertices $x \neq y \in \textbf{int}(W)$ are adjacent if there exists a brick $B$ of $W$ such that $x,y \in V(B)$. Then we define the \emph{brick-distance} $\text{dist}_W(w_1,w_2)$ of two vertices $w_1, w_2 \in \textbf{int}(W)$ as their distance in the graph $G_W$.
\end{definition}
\begin{lemma}\label{lemma:intersections}
Let $D$ be a digraph, and let $W \subseteq D$ be a cylindrical wall which is weakly flat in $D$. Let $w_1, w_2 \in \textbf{int}(W)$. Then
\begin{enumerate}[label=(\roman*)]
\item $R_W^+[w_1] \cap R_W^-[w_2]=\emptyset$, if $\text{dist}_W(w_1,w_2) \ge 2$, and
\item $R_W^+[w_1] \cap R_W^+[w_2]=\emptyset$, if $D$ is strongly connected and $\text{dist}_W(w_1,w_2) \ge 3$.
\end{enumerate}
\end{lemma}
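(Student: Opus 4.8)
The plan is to prove both items by contradiction: assuming a vertex $z$ lies in the stated intersection, I will build a directed path in $D$ joining $w_1$ and $w_2$ (or joining one of them to a third interior vertex) that is internally vertex-disjoint from $\textbf{int}(W)$ and has both endpoints in $\textbf{int}(W)$, and then invoke weak flatness to force the bricks containing these endpoints to overlap, which contradicts the brick-distance hypothesis. The only tools needed besides weak flatness are two elementary facts that I would record first: (a) if $z \in R_W^+[w]$ then there is a $w$-$z$-dipath $P$ with $V(P) \cap \textbf{int}(W) = \{w\}$, since $P$ lives in $D-(\textbf{int}(W)\setminus\{w\})$ and $z \notin \textbf{int}(W)$, and symmetrically for $R_W^-[w]$; and (b) a directed walk from $u$ to $v$ with $u \ne v$ contains a $u$-$v$-dipath whose vertex set is contained in that of the walk (in particular, its internal vertices lie in the walk's vertex set minus $\{u,v\}$).

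For item (i): take $z \in R_W^+[w_1] \cap R_W^-[w_2]$ and pick dipaths $P_1$ from $w_1$ to $z$ and $P_2$ from $z$ to $w_2$ with $V(P_1) \cap \textbf{int}(W) = \{w_1\}$ and $V(P_2) \cap \textbf{int}(W) = \{w_2\}$. Since $\text{dist}_W(w_1,w_2) \ge 2$ forces $w_1 \ne w_2$, we get $w_2 \notin V(P_1)$ and $w_1 \notin V(P_2)$, hence $(V(P_1)\cup V(P_2)) \cap \textbf{int}(W) = \{w_1,w_2\}$. Extracting a $w_1$-$w_2$-dipath $Q$ from the walk $P_1 \circ P_2$, all internal vertices of $Q$ avoid $\textbf{int}(W)$, so weak flatness yields a brick containing both $w_1$ and $w_2$; thus $\text{dist}_W(w_1,w_2) \le 1$, a contradiction.

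For item (ii): now $z \in R_W^+[w_1] \cap R_W^+[w_2]$ gives dipaths $P_1 \colon w_1 \to z$ and $P_2 \colon w_2 \to z$, each meeting $\textbf{int}(W)$ only in its start vertex. As both are oriented into $z$ we cannot concatenate directly; this is where strong connectivity enters. Choose a dipath from $z$ into $\textbf{int}(W)$ (possible since $D$ is strongly connected and $w_1 \in \textbf{int}(W)$) and truncate it at its first interior vertex $w_3$, obtaining a dipath $R \colon z \to w_3$ with $V(R) \cap \textbf{int}(W) = \{w_3\}$. If $w_3 \notin \{w_1,w_2\}$, then, exactly as in (i), $P_1 \circ R$ contains a $w_1$-$w_3$-dipath internally disjoint from $\textbf{int}(W)$, giving $\text{dist}_W(w_1,w_3) \le 1$, and symmetrically $\text{dist}_W(w_2,w_3) \le 1$; the triangle inequality then gives $\text{dist}_W(w_1,w_2) \le 2$, contradicting $\ge 3$. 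If $w_3 = w_1$ (resp.\ $w_3 = w_2$), then $P_2 \circ R$ (resp.\ $P_1 \circ R$) already contains a $w_2$-$w_1$-dipath (resp.\ $w_1$-$w_2$-dipath) internally disjoint from $\textbf{int}(W)$, so $\text{dist}_W(w_1,w_2) \le 1$, again a contradiction.

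I do not expect a real obstacle here; the argument is a short reachability manipulation. The points I would be careful to write out are the bookkeeping that the concatenated walks meet $\textbf{int}(W)$ in exactly their two endpoints — this repeatedly uses $w_1 \ne w_2$, and in the relevant case $w_3 \notin \{w_1,w_2\}$, precisely what the distance hypotheses guarantee — and the remark that weak flatness still applies when the extracted dipath is a single arc (vacuously internally vertex-disjoint from $\textbf{int}(W)$).
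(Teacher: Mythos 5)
Your proof is correct and takes essentially the same route as the paper: in both items you concatenate the reachability paths, extract a dipath internally disjoint from $\textbf{int}(W)$, apply weak flatness to obtain a shared brick, and contradict the brick-distance. The only cosmetic difference is in (ii), where the paper takes a shortest $x$-to-$\textbf{int}(W)$ path $P_3$ ending at $y$, observes that $\text{dist}_W(w_1,y)+\text{dist}_W(y,w_2)\ge 3$ forces one of the summands to be at least $2$, and derives the contradiction from that one, whereas you split on whether $w_3$ coincides with $w_1$, $w_2$, or neither and in the last case derive $\text{dist}_W(w_1,w_3)\le 1$ and $\text{dist}_W(w_2,w_3)\le 1$ before applying the triangle inequality; these are the same argument phrased slightly differently.
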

\begin{proof}\noindent
\begin{enumerate}[label=(\roman*)]
\item Suppose towards a contradiction that $\text{dist}_W(w_1,w_2) \ge 2$ and $R_W^+[w_1] \cap R_W^-[w_2] \neq \emptyset$. Pick some $x \in R_W^+[w_1] \cap R_W^-[w_2]$. Then by definition, there exists a $w_1$-$x$-dipath $P_1$ and an $x$-$w_2$-dipath $P_2$ in $D$ such that $V(P_1) \cap \textbf{int}(W)=\{w_1\}$, $V(P_2)\cap\textbf{int}(W)=\{w_2\}$. Let $z$ be the first vertex of $V(P_2)$ we meet when traversing $P_1$ starting at $w_1$. Then $P:=P_1[w_1,z]\circ P_2[z,w_2]$ is a $w_1$-$w_2$-dipath in $D$ satisfying $V(P) \cap \textbf{int}(W)=\{w_1,w_2\}$. Since $W$ is weakly flat in $D$, this means that there is a brick $B$ of $W$ such that $w_1, w_2 \in V(B)$, and hence $\text{dist}_W(w_1,w_2) \le 1$, a contradiction to our initial assumptions. This proves the claim.
\item Suppose towards a contradiction that $R_W^+[w_1] \cap R_W^+[w_2] \neq \emptyset$, that $D$ is strongly connected and that $\text{dist}_W(w_1,w_2) \ge 3$. Pick a vertex $x \in R_W^+[w_1] \cap R_W^+[w_2]$. By definition, there exists a $w_1$-$x$-dipath $P_1$ and a $w_2$-$x$-dipath $P_2$ in $D$ such that $V(P_1) \cap \textbf{int}(W)=\{w_1\}$, $V(P_2) \cap \textbf{int}(W)=\{w_2\}$. Since $D$ is strongly connected, there exists a directed path in $D$ starting in $x$ and ending in a vertex of $\text{int}(W)$. Let $P_3$ be a shortest directed path with this property. Then clearly $V(P_3) \cap \textbf{int}(W)=\{y\}$, where $y$ denotes the last vertex of $P_3$. Since $3 \le \text{dist}_W(w_1,w_2) \le \text{dist}_W(w_1,y)+\text{dist}_W(y,w_2)$, we have $\text{dist}_W(w_1,y) \ge 2$ or $\text{dist}_W(w_2,y) \ge 2$. W.l.o.g. suppose that the former is true. Let $z$ be the first vertex of $V(P_3)$ we meet when traversing $P_1$ starting at $w_1$. Then $P:=P_1[w_1,z] \circ P_3[z,y]$ is a $w_1$-$y$-dipath in $D$ satisfying $V(P) \cap \textbf{int}(W)=\{w_1,y\}$. Since $W$ is weakly flat in $D$, we conclude that there exists a brick $B$ of $W$ such that $w_1, y \in V(B)$, contradicting the fact that $\text{dist}_W(w_1,y) \ge 2$. Hence our initial assumption was wrong, and the proof concludes.
\end{enumerate}
\end{proof}

The next lemma is the main technical result of this subsection, yielding conditions which guarantee a $k$-train intersecting a weakly flat wall only in a ``thin strip''.
\begin{lemma}\label{lemma:technical}
Let $k \in \mathbb{N}$, let $D$ be a digraph with $\delta^+(D) \ge 7k-5$, and let $W \subseteq D$ be a cylindrical wall which is weakly flat in $D$. Let $w$ be a branch-vertex of $D$, whose coordinates in $W$ are $(c_1,c_2)$ for $c_1, c_2 \in \mathbb{N}$ such that $c_1=2\ell-1$ is odd and $c_2$ is even. 

Let $S \subseteq V(W)$ be defined as the set consisting of all branch vertices of $W$ whose first coordinate is in $\{c_1-2,c_1-1,c_1,c_1+1,c_1+2,c_1+3\}$, together with all vertices contained in the interior of a subdivision-path of $W$ spanned between two such branch vertices.

If $S \cap \textbf{per}(W)=\emptyset$, then $D[S \cup R_W^+[w]]$ contains a $k$-train.
\end{lemma}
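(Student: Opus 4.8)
My plan is to split into the two cases provided by Lemma~\ref{lemma:in or out}, applied to $w$ with parameters $a:=3k-2$ and $b:=4k-3$ (so $a+b=7k-5\le\delta^+(D)$, $a\ge 3k-2$ and $b\ge k$). Suppose first the second outcome holds and $R_W^+[w]\neq\emptyset$: then $D[R_W^+[w]]$ has minimum out-degree at least $b\ge k$, so by Observation~\ref{obs:existsktrain} it contains a $k$-train, and since $R_W^+[w]\subseteq S\cup R_W^+[w]$ we are done. In every remaining case the first outcome holds (inspecting the proof of Lemma~\ref{lemma:in or out}, when $R_W^+[w]=\emptyset$ it holds with $x=w$): there are a vertex $x$, a directed path $P$ from $w$ to $x$ with $V(P)\cap\textbf{int}(W)=\{w\}$, and distinct $v_1,\ldots,v_a\in\textbf{int}(W)\setminus\{w\}$ with $(x,v_i)\in A(D)$ for all $i$.

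The key step is to localize the $v_i$ via weak flatness. Tracing $P$ from $w$, every vertex of $P$ other than $w$ lies in $R_W^+[w]$, so $V(P)\subseteq\{w\}\cup R_W^+[w]\subseteq S\cup R_W^+[w]$. For each $i$, since $v_i\notin V(P)$ the concatenation $P\circ(x,v_i)$ is a directed \emph{path}, internally vertex-disjoint from $\textbf{int}(W)$, with both endpoints $w,v_i$ in $\textbf{int}(W)$; weak flatness of $W$ in $D$ hence yields a brick $B_i$ of $W$ with $w,v_i\in V(B_i)$. Here I would also invoke the geometric facts --- checked from the canonical embedding and the coordinates of $w$ --- that $w$, being an interior branch vertex (as $S\subseteq\textbf{int}(W)$), lies on at most three bricks, that every brick is a directed cycle, and that every brick containing $w$ is contained in $S$; the six-column window $\{c_1-2,\ldots,c_1+3\}$ and the parities of $(c_1,c_2)$ are chosen exactly to make the last statement hold.

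Then a pigeonhole argument and a concatenation would complete the construction. Assigning each $v_i$ to a brick $B_i\ni w$, of which there are at most three, some brick $B\ni w$ receives at least $\lceil(3k-2)/3\rceil=k$ of the $v_i$. Traversing the directed cycle $B$ starting at $w$, let $v_{i_1},\ldots,v_{i_k}$ be the first $k$ of the $v_i$ met, in this order; then $R:=B[v_{i_1},w]$ is a directed path from $v_{i_1}$ to $w$ passing through $v_{i_1},\ldots,v_{i_k}$ in order, with $V(R)\subseteq V(B)\subseteq S\subseteq\textbf{int}(W)$. Since $V(P)\setminus\{w\}\subseteq R_W^+[w]$ is disjoint from $\textbf{int}(W)$, the paths $R$ and $P$ meet only in $w$, so $\widehat P:=R\circ P$ is a directed path from $v_{i_1}$ to $x$ with $V(\widehat P)\subseteq S\cup R_W^+[w]$. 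Appending the arcs $(x,v_{i_1}),\ldots,(x,v_{i_k})$ of $D$ turns $\widehat P$ into a $k$-train: $v_{i_1}$ is its first vertex, $x$ is its last vertex, and $v_{i_2},\ldots,v_{i_k}$ are internal vertices occurring in this order before $x$. All vertices and arcs of this $k$-train lie in $D[S\cup R_W^+[w]]$, as required.

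The combinatorial core above is routine; the part where I expect the real work to lie is the geometric bookkeeping --- checking, from the definitions of the canonical embedding and of a brick, that every brick through the branch vertex $w$ is a directed cycle contained in the six-column strip $S$, and that an interior branch vertex lies on at most three bricks. The asymmetric window and the parity conditions on $(c_1,c_2)$ are precisely what that verification demands.
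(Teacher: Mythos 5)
Your overall scaffolding — apply Lemma~\ref{lemma:in or out}, use Observation~\ref{obs:existsktrain} in the ``large out-degree inside $R_W^+[w]$'' case, localize the $v_i$ to bricks through $w$ via weak flatness, pigeonhole, and then route a path back to $w$ before appending $P$ — matches the paper. But there is one genuine gap in the part you flagged as ``geometric bookkeeping'': a brick of a cylindrical wall is \emph{not} a directed cycle. A brick's boundary alternates between segments of vertical cycles $Q_j$ (which all run the same way) and segments of the horizontal paths $P_i^1, P_i^2$ (which alternate left-to-right and right-to-left). Tracing a brick boundary you therefore run against the arc direction on some segments; the brick is a cycle only in the underlying undirected graph. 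You can read this off from the paper's own proof: brick $B_1$, with branch vertices $u_1,u_2,u_3,u_5,w,u_6$, is covered by \emph{two} directed paths $P_1 = W[u_1,u_2]\circ W[u_2,u_3]\circ W[u_3,u_6]\circ W[u_6,w]$ and $P_2 = W[u_1,u_5]$, and $u_1$ has two out-arcs along the brick boundary — impossible if it were a directed cycle.

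Because of this, your construction of $R := B[v_{i_1},w]$ as a directed path from $v_{i_1}$ to $w$ inside the brick fails in general: there need not be any directed path within $B$ from a given $v_i$ back to $w$. The paper's version addresses exactly this by taking $a = 6k-5$ (not $3k-2$) out-neighbors and pigeonholing over \emph{six} directed paths $P_1,\dots,P_6$ covering the union of the three bricks, and then for each of the six cases it builds a dedicated return route to $w$ which may detour through the neighboring column via the vertical cycle $Q_\ell$ and row $c_2+2$ (this is why the window is asymmetric, reaching to $c_1+3$). So the fix is not purely cosmetic: you need to replace the ``one brick = one directed cycle'' pigeonhole by a pigeonhole over a directed-path cover of the brick union, with the corresponding larger value of $a$ and explicit detours, as in the paper.
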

\begin{proof}
\begin{figure}[h]
\centering
\includegraphics[scale=1]{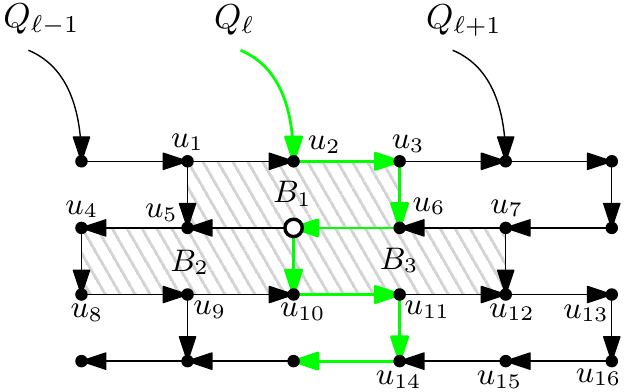}
\caption{The local situation around the vertex $w$ in the proof of Lemma~\ref{lemma:technical}, showing the bricks incident to $w$ and relevant branch vertices within $S$. The non-filled central vertex represents $w$, while the green fat path represents part of the cycle $Q_{\ell}$.}\label{fig:zoomin}
\end{figure}
By definition of $S$ and since $S \cap \textbf{per}(W)=\emptyset$, we have $w \in S \subseteq \textbf{int}(W)$. Since the definition of $S$ depends only on the first coordinate of $w$, and by the cylindrical symmetry of $W$, we may assume w.l.o.g. $c_2=2$. 
Note that $w$ is contained in the directed cycle $Q_{\ell}$ of $W$, and that $V(Q_{\ell-1}) \cup V(Q_\ell) \cup V(Q_{\ell+1}) \subseteq S$.
 
By Lemma~\ref{lemma:in or out} applied with $a=6k-5$ and $b=k$, either (1) there exists $x \in V(D)\setminus \textbf{int}(W)$, a $w$-$x$-dipath $P$ in $D$ such that $V(P) \cap \textbf{int}(W)=\{w\}$, and distinct out-neighbors $v_1,\ldots,v_{6k-5} \in \textbf{int}(W)\setminus \{w\}$ of $x$, or (2) $\delta^+(D[R_W^+[w]]) \ge k$.

If (2) occurs, then we can apply Observation~\ref{obs:existsktrain} to conclude that $D[R_W^+[w]]$ and therefore also $D[S \cup R_W^+[w]]$ contains a $k$-train as a subdigraph, which proves the assertion in this case. Hence, assume for the rest of the proof that (1) occurs. 

Note that $V(P) \cap \textbf{int}(W)=\{w\}$ implies that $x \in V(P) \subseteq \{w\} \cup R_W^+[w] \subseteq S \cup R_W^+[w]$. Further note that for every $i \in \{1,\ldots,6k-5\}$, the dipath $P \circ (x,v_i)$ intersects $\textbf{int}(W)$ only in its first and last vertex, and hence the weak flatness of $W$ in $D$ implies that $v_i$ is contained in one of the three bricks of $W$ meeting at $w$. 

For further reasoning, we fix the following notation (compare Figure~\ref{fig:zoomin}): We denote by $u_1,\ldots,u_{16}$ $16$ distinct branch vertices of $W$ contained in $S$, given by the following coordinates: $$(c_1-1,1),(c_1,1),(c_1+1,1),(c_1-2,2),(c_1-1,2),(c_1+1,2),(c_1+2,2),$$ $$(c_1-2,3),(c_1-1,3),(c_1,3),(c_1+1,3),(c_1+2,3),(c_1+3,3),(c_1+1,4), (c_1+2,4),(c_1+3,4).$$
Let us label the three bricks incident to $w$ as $B_1, B_2, B_3$, where $B_1$ contains the branch vertices $u_1, u_2, u_3, u_5, w, u_6$, $B_2$ contains $u_4, u_5, w, u_8, u_9, u_{10}$, and $B_3$ contains $w, u_6, u_7, u_{10}, u_{11}, u_{12}$.

In the following, for two distinct branch-vertices $s, t$ of $W$ which are connected by a directed subdivision-path, let us denote this path by $W[s,t]=W[t,s]$. 

Let $P_1, P_2, P_3, P_4, P_5, P_6$ be $6$ directed paths in $S_i$ defined by
$$P_1=W[u_1,u_2] \circ W[u_2,u_3] \circ W[u_3,u_6] \circ W[u_6, w], P_2=W[u_1,u_5],$$
$$P_3=W[w,u_5] \circ W[u_5, u_4] \circ W[u_4,u_8] \circ W[u_8, u_9] \circ W[u_9,u_{10}],$$
$$P_4=W[w,u_{10}]\circ W[u_{10},u_{11}] \circ W[u_{11},u_{12}],$$
$$P_5=W[u_7,u_6], P_6=W[u_7,u_{12}].$$

Then we have $V(B_1) \cup V(B_2) \cup V(B_3)=V(P_1) \cup V(P_2) \cup V(P_3) \cup V(P_4) \cup V(P_5) \cup V(P_6)$. By the above, we have $\{v_1,\ldots,v_{6k-5}\} \subseteq (V(B_1) \cup V(B_2) \cup V(B_3))\setminus \{w\}$, and hence there is $j \in \{1,\ldots,6\}$ such that $P_j$ contains at least $k$ of the vertices $v_1,\ldots,v_{6k}$. Let $y_1,\ldots,y_k \in V(P_j) \setminus \{w\}$ be the ordering of these $k$ vertices on the path $P_j$ when traversing it starting from its first vertex. The rest of the proof shows how to find a $k$-train in $D[S \cup R_W^+[w]]$ for every value of $j$.
\begin{itemize}
\item \textbf{Case $j=1$}. Let us consider the directed path $Q:=P_1[y_1,w] \circ P$, which starts at $y_1$ and ends at $x$. Then $Q \subseteq D[S \cup R_W^+[w]]$, and the last vertex $x$ of $Q$ has the $k$ distinct out-neighbors $y_1,\ldots,y_k$ on $Q$, showing that $D[S \cup R_W^+[w]]$ contains a $k$-train.

\item \textbf{Case $j=2$}. Consider the directed path $Q:=P_2[y_1,u_5] \circ P_3[u_5,u_{10}] \circ Q_{\ell}[u_{10},w] \circ P$, which starts at $y_1$ and ends at $x$ (recall that $Q_{\ell}[u_{10},w]$ denotes the directed subpath of the directed cycle $Q_{\ell}$ starting at $u_{10}$ and ending at $w$). Then $Q \subseteq D[S \cup R_W^+[w]]$, and the last vertex $x$ of $Q$ has the $k$ distinct out-neighbors $y_1,\ldots,y_k$ on $Q$. Hence, $D[S \cup R_W^+[w]]$ contains a $k$-train.

\item \textbf{Case $j=3$}. Consider the directed path $Q:=P_3[y_1,u_{10}] \circ Q_{\ell}[u_{10},w] \circ P$, which starts at $y_1$ and ends at $x$. Then $Q \subseteq D[S \cup R_W^+[w]]$, and the last vertex $x$ of $Q$ has the $k$ distinct out-neighbors $y_1,\ldots,y_k$ on $Q$, showing that $D[S \cup R_W^+[w]]$ contains a $k$-train.

\item \textbf{Case $j=4$}. Consider the directed path $$Q:=P_4[y_1,u_{12}] \circ W[u_{12},u_{13}] \circ W[u_{13},u_{16}] \circ W[u_{16}, u_{15}] \circ W[u_{15},u_{14}] \circ Q_\ell[u_{14},w] \circ P,$$ which starts at $y_1$ and ends at $x$. Then $Q \subseteq D[S \cup R_W^+[w]]$, and the last vertex $x$ of $Q$ has the $k$ distinct out-neighbors $y_1,\ldots,y_k$ on $Q$, showing that $D[S \cup R_W^+[w]]$ contains a $k$-train.

\item \textbf{Case $j=5$}. Consider the directed path $Q:=P_5[y_1,u_6] \circ W[u_6, w] \circ P$, which starts at $y_1$ and ends at $x$. Then $Q \subseteq D[S \cup R_W^+[w]]$, and the last vertex $x$ of $Q$ has the $k$ distinct out-neighbors $y_1,\ldots,y_k$ on $Q$, showing that $D[S \cup R_W^+[w]]$ contains a $k$-train.

\item \textbf{Case $j=6$}. Consider the directed path $$Q:=P_6[y_1,u_{12}] \circ W[u_{12},u_{13}] \circ W[u_{13},u_{16}] \circ W[u_{16}, u_{15}] \circ W[u_{15},u_{14}] \circ Q_\ell[u_{14},w] \circ P,$$ which starts at $y_1$ and ends at $x$. Then $Q \subseteq D[S \cup R_W^+[w]]$, and the last vertex $x$ of $Q$ has the $k$ distinct out-neighbors $y_1,\ldots,y_k$ on $Q$, showing that $D[S \cup R_W^+[w]]$ contains a $k$-train.
\end{itemize}

In every case, $D[S \cup R_W^+[w]]$ contains a $k$-train. This concludes the proof of the lemma.
\end{proof}

Before moving on, let us note the following symmetrical version of Lemma~\ref{lemma:technical}. In the following we refer to a digraph obtained from a $k$-train by reversing the orientations of all its arcs as a \emph{reverse-$k$-train}.
\begin{corollary}\label{cor:sym}
Let $k \in \mathbb{N}$, let $D$ be a digraph with $\delta^-(D) \ge 7k-5$, and let $W \subseteq D$ be a cylindrical wall which is weakly flat in $D$. Let $w$ be a branch-vertex of $D$, whose coordinates in $W$ are $(c_1,c_2)$ for $c_1, c_2 \in \mathbb{N}$ such that $c_1=2\ell-1$ and $c_2$ are odd. 

Let $S \subseteq V(W)$ be defined as the set consisting of all branch vertices of $W$ whose first coordinate is in $\{c_1-2,c_1-1,c_1,c_1+1,c_1+2,c_1+3\}$, together with all vertices contained in the interior of a subdivision-path of $W$ spanned between two such branch vertices.

If $S \cap \textbf{per}(W)=\emptyset$, then $D[S \cup R_W^-[w]]$ contains a reverse-$k$-train.
\end{corollary}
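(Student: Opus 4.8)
The plan is to deduce Corollary~\ref{cor:sym} from Lemma~\ref{lemma:technical} by the standard device of passing to the \emph{reverse digraph}. Given a digraph $D$ as in the statement of the corollary, let $\revvec{D}$ denote the digraph on the same vertex set obtained by reversing the orientation of every arc. Then $\delta^+(\revvec{D}) = \delta^-(D) \ge 7k-5$, so the degree hypothesis of Lemma~\ref{lemma:technical} is satisfied by $\revvec{D}$. Moreover $W$, viewed inside $\revvec{D}$ with all arcs reversed, is again a cylindrical wall: reversing every arc of a subdivision of $W_k$ yields a subdivision of $W_k$ with the roles of ``first'' and ``last'' vertices of each horizontal path exchanged and the vertical cycles traversed in the opposite direction, which is still (isomorphic to) a cylindrical wall of the same order. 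Crucially, the weak-flatness property is symmetric under arc-reversal: a directed path in $\revvec{D}$ internally disjoint from $\textbf{int}(W)$ with both endpoints in $\textbf{int}(W)$ is exactly the reverse of such a path in $D$, so it has the same (unordered) pair of endpoints, and hence lies in a common brick iff the corresponding path in $D$ does. Therefore $W$ is weakly flat in $\revvec{D}$.

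Next I would check that the coordinate hypothesis matches up after reversal. Reversing all arcs turns each horizontal path $P_i^1$ (left-to-right in $D$) into a right-to-left path in $\revvec{D}$ and vice versa, which under the coordinate convention of the excerpt amounts to swapping the parities of the first coordinates of the two horizontal paths inside each vertical ``level'' $i$. Concretely, a branch vertex that in $D$ sits on an even-indexed horizontal path (first coordinate $c_1$ with the parity that Lemma~\ref{lemma:technical} requires, together with even $c_2$) becomes, in $\revvec{D}$, a branch vertex whose coordinates $(c_1', c_2')$ satisfy exactly the parity conditions demanded by Lemma~\ref{lemma:technical} — i.e.\ $c_1' = 2\ell'-1$ odd and $c_2'$ even. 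The corollary's hypothesis that $c_1 = 2\ell-1$ and $c_2$ are both \emph{odd} in $D$ is precisely the translation of Lemma~\ref{lemma:technical}'s hypothesis (odd first coordinate, even second coordinate) through this parity swap. Since the set $S$ is defined purely in terms of the first coordinate (a window of six consecutive values $c_1-2,\dots,c_1+3$) and of subdivision vertices between the selected branch vertices, and since the arc-reversal isomorphism of walls carries this window to a window of the same width around the image vertex, $S$ is mapped to the corresponding set $S'$ for $\revvec{D}$, and $S \cap \textbf{per}(W) = \emptyset$ iff $S' \cap \textbf{per}(W) = \emptyset$.

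With these identifications in place, apply Lemma~\ref{lemma:technical} to $\revvec{D}$, $W$, the image vertex $w$, and $S'$. Since reachability in $\revvec{D}-(\textbf{int}(W)\setminus\{w\})$ is the reverse of reachability in $D-(\textbf{int}(W)\setminus\{w\})$, we have $R_{W}^{+}[w]$ computed in $\revvec{D}$ equal to $R_{W}^{-}[w]$ computed in $D$; hence $\revvec{D}[S' \cup R_W^+[w]]$ equals the reverse of $D[S \cup R_W^-[w]]$. Lemma~\ref{lemma:technical} guarantees a $k$-train in the former; reversing all arcs, a $k$-train becomes exactly a reverse-$k$-train, so $D[S \cup R_W^-[w]]$ contains a reverse-$k$-train, as required.

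I do not expect any serious obstacle here: the whole argument is a bookkeeping exercise confirming that every hypothesis and conclusion of Lemma~\ref{lemma:technical} is invariant (or flips in the expected way) under arc-reversal. The one point that needs a little care — and the only place where a reader might want an explicit sentence rather than ``by symmetry'' — is the parity translation of the coordinate conventions: one must verify that reversing arcs swaps the parities of the first coordinates of the two horizontal paths at each level, so that the corollary's ``$c_2$ odd'' is the faithful image of the lemma's ``$c_2$ even''. Everything else (degree hypothesis, weak flatness, the coordinate window defining $S$, and the reachability sets) transfers transparently.
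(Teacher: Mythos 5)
Your strategy is exactly the paper's: pass to the reverse digraph $\revvec D$, check that the hypotheses of Lemma~\ref{lemma:technical} transfer, apply the lemma there, and reverse back to obtain a reverse-$k$-train in $D$. However, the ``coordinate bookkeeping'' you single out as the only point needing care is precisely where your write-up goes wrong, and in an internally contradictory way: you first claim that arc-reversal ``swaps the parities of the first coordinates'', and two sentences later you describe a vertex with coordinates $(\mathrm{odd},\mathrm{even})$ in $D$ landing on $(\mathrm{odd},\mathrm{even})$ in $\revvec D$ --- no parity change at all --- and neither version matches the corollary's hypothesis, which has $c_2$ \emph{odd}. What is actually true (and what the paper records explicitly) is that arc-reversal preserves the first coordinate \emph{exactly} and sends $c_2\mapsto 2m+1-c_2$, where $m$ is the order of $W$: reading off the convention, the $j$-th branch vertex of $P_i^1$ at $(j,2i-1)$ becomes the $(2m+1-j)$-th branch vertex of a right-to-left horizontal path of $\revvec W$, which receives coordinates $(2m+1-(2m+1-j),\,2i')=(j,2i')$, hence the same first coordinate. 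Consequently, $c_2$ odd in $D$ becomes $2m+1-c_2$ even in $\revvec D$, as Lemma~\ref{lemma:technical} requires, and --- crucially --- $S$, being determined by first coordinates alone, is \emph{literally the same vertex set} in $W$ and $\revvec W$; there is no ``corresponding set $S'$'' to chase through an isomorphism, and the condition $S\cap\textbf{per}(W)=\emptyset$ transfers with no further argument. This exact invariance is not cosmetic: your asymmetric window $\{c_1-2,\dots,c_1+3\}$ would \emph{not} map onto the analogous window under a first-coordinate reflection. Once this is corrected, your argument coincides with the paper's proof.
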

\begin{proof}
Let $m$ denote the order of the wall $W$. Let $\revvec{D}$ denote the digraph obtained from $D$ by reversing its arcs. Then $\delta^+(\revvec{D}) \ge 7k-5$ and $\revvec{D}$ contains $\revvec{W}$, the digraph obtained from $W$ by reversing the orientations, as a subdigraph. Then $\revvec{W}$ itself forms a wall with the following associated coordinates: Any branch-vertex of $W$ at coordinates $(t_1,t_2)$ receives the coordinates $(t_1,2m-t_1)$ in $\revvec{W}$. In particular $w$ has cordinates $(c_1,2m+1-c_2)$ in $\revvec{W}$, where $c_1$ is odd and $2m+1-c_2$ is even. Since $\textbf{per}(W)=\textbf{per}(\revvec{W})$, the weak flatness of $W$ in $D$ implies that also $\revvec{W}$ is weakly flat in $\revvec{D}$. Also note that the set $S$ remains  invariant when changing between $W$ and $\revvec{W}$. We may thus apply Lemma~\ref{lemma:technical} to $\revvec{D}$ and the vertex $w$ in $\revvec{W} \subseteq \revvec{D}$ to find that $\revvec{D}[S \cup R_{\revvec{W}}^+[w]]=\revvec{D}[S \cup R_{W}^-[w]]$ contains a $k$-train. It directly follows that $D[S \cup R_W^-[w]]$ contains a reverse-$k$-train, and the claim follows.
\end{proof}

The following consequences of Lemma~\ref{lemma:technical} are used in the proofs of Theorem~\ref{thm:mainconn} and Theorem~\ref{thm:mainsem}. 

\begin{lemma}\label{lemma:strongcase}
Let $D$ be a strongly connected digraph, and let $W \subseteq D$ be a cylindrical wall of order $3k+2$ which is weakly flat in $D$. If $\delta^+(D) \ge 7k-5$, then $D$ contains $k$ vertex-disjoint directed cycles of distinct lengths.
\end{lemma}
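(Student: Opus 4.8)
The plan is to find $k$ vertex-disjoint $k$-trains inside $D$ using Lemma~\ref{lemma:technical}, and then to invoke Observation~\ref{obs:disjointktrains}. The wall $W$ has order $3k+2$, so it contains $3k+2$ vertical cycles $Q_1,\ldots,Q_{3k+2}$ and correspondingly the columns of branch vertices partition naturally into "thin strips" of six consecutive first-coordinates. Concretely, for $i=1,\ldots,k$ I would pick the branch vertex $w_i$ with coordinates $(c_1^{(i)},2)$ where $c_1^{(i)}=6i-3$ (odd, as required), so that the associated set $S_i$ from Lemma~\ref{lemma:technical} consists of the branch vertices whose first coordinate lies in $\{6i-5,\ldots,6i\}$ together with the subdivision vertices between them. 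Since $W$ has order $3k+2$ and we only use first coordinates up to $6k < 2(3k+2)$, and the strips avoid the two extreme vertical cycles $Q_1$ and $Q_{3k+2}$, each $S_i$ satisfies $S_i\cap\textbf{per}(W)=\emptyset$, and moreover the strips $S_1,\ldots,S_k$ are pairwise disjoint (they occupy disjoint ranges of first coordinates).

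Next, for each $i$ apply Lemma~\ref{lemma:technical} (legitimate since $\delta^+(D)\ge 7k-5$ and $W$ is weakly flat) to conclude that $D[S_i\cup R_W^+[w_i]]$ contains a $k$-train $T_i$. The remaining and main issue is to make these $k$ trains pairwise vertex-disjoint. The strips $S_i$ are already disjoint, so the only possible overlaps come from the "tail" parts of the trains living in $R_W^+[w_i]$, i.e.\ outside $\textbf{int}(W)$. Here I would appeal to Lemma~\ref{lemma:intersections}(ii): since $D$ is strongly connected, $R_W^+[w_i]\cap R_W^+[w_j]=\emptyset$ whenever $\text{dist}_W(w_i,w_j)\ge 3$. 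With the choice $c_1^{(i)}=6i-3$, consecutive centers $w_i,w_{i+1}$ differ by $6$ in first coordinate, and one checks from the brick structure of the wall that this forces $\text{dist}_W(w_i,w_{i+1})\ge 3$ (each step in $G_W$ changes the first coordinate by at most $2$, so brick-distance at least $3$ is guaranteed once the first coordinates differ by more than $4$; a spacing of $6$ is comfortably enough). Hence all the sets $R_W^+[w_i]$ are pairwise disjoint, and combined with disjointness of the $S_i$ we get that $D[S_1\cup R_W^+[w_1]],\ldots,D[S_k\cup R_W^+[w_k]]$ are pairwise vertex-disjoint subdigraphs, so the trains $T_1,\ldots,T_k$ are vertex-disjoint.

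Finally, Observation~\ref{obs:disjointktrains} applied to the $k$ vertex-disjoint $k$-trains $T_1,\ldots,T_k$ yields $k$ vertex-disjoint directed cycles of pairwise distinct lengths in $D$, completing the proof. The one subtlety to get right — and the part I expect to require the most care — is the bookkeeping of coordinates: verifying simultaneously that $6k$ does not exceed the coordinate range of a wall of order $3k+2$, that each strip stays away from $\textbf{per}(W)$, that the strips are disjoint, and that the spacing of the centers translates into brick-distance at least $3$. All of this is elementary but must be checked against the precise coordinate conventions fixed earlier for cylindrical walls. (A cleaner alternative, if one wants a slightly larger constant, is to take the centers spaced so that whole vertical cycles $Q_{3i-1}$ separate them, making the strip-disjointness and the brick-distance bound visually obvious; since the order $3k+2$ already leaves room, I would phrase the argument this way.)
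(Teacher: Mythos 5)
Your overall strategy is exactly the paper's: place $k$ branch vertices $w_1,\ldots,w_k$ spaced $6$ columns apart in the interior of $W$, apply Lemma~\ref{lemma:technical} to get a $k$-train inside each $D[S_i\cup R_W^+[w_i]]$, use strong connectivity together with Lemma~\ref{lemma:intersections}(ii) to make the sets $R_W^+[w_i]$ pairwise disjoint, and conclude via Observation~\ref{obs:disjointktrains}. That matches the paper's argument step for step.

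The one genuine error is in precisely the coordinate bookkeeping that you yourself flag as the delicate point. You put $w_i$ at $(c_1^{(i)},2)$ with $c_1^{(i)}=6i-3$, so that $S_i$ covers columns $\{6i-5,\dots,6i\}$. For $i=1$ this is $\{1,\dots,6\}$, which includes columns $1$ and $2$ --- and those belong to the outermost vertical cycle $Q_1$, hence to $\textbf{per}(W)$. So $S_1\cap\textbf{per}(W)\neq\emptyset$, violating the hypothesis of Lemma~\ref{lemma:technical} exactly in the case you would apply it. (You checked that the largest first coordinate $6k$ stays below $2(3k+2)$, i.e.\ avoids $Q_{3k+2}$, but did not verify the symmetric lower bound that the smallest coordinate $6\cdot 1-5=1$ avoids $Q_1$.) The fix is a shift by two: take $c_1^{(i)}=6i-1$ as in the paper, so that $S_i$ covers columns $\{6i-3,\dots,6i+2\}$. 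Then $S_1$ starts at column $3$ and $S_k$ ends at column $6k+2<6k+3$, clearing $Q_1$ and $Q_{3k+2}$ on both sides; the strips remain pairwise disjoint, the spacing of $6$ still gives $\text{dist}_W(w_i,w_j)\ge 3$, and the rest of your argument (including the observation that $S_i\subseteq\textbf{int}(W)$ while $R_W^+[w_j]$ lies outside $\textbf{int}(W)$, so the only coincidence to rule out is $R_W^+[w_i]\cap R_W^+[w_j]$) goes through unchanged.
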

\begin{proof}
Let us pick distinct branch vertices $w_1,\ldots,w_{k} \in \textbf{int}(W)$ of $W$ given by the following coordinates (compare Figure~\ref{fig:bigwall}): For every $1 \le i \le k$, $w_i$ is at position $(6i-1,2)$ in $W$. We note that $w_i$ is contained in the cycle $Q_{3i}$ of $W$ for every $1 \le i \le k$, and that $\text{dist}_W(w_i,w_j) \ge 3$ for every $1 \le i <j \le k$. Using Lemma~\ref{lemma:intersections}, (ii) we conclude that the sets $R_W^+[w_i]$, $i=1,\ldots,k$ are pairwise disjoint. For every $1 \le i \le k$, let us define the \emph{$i$-th strip} $S_i \subseteq \textbf{int}(W)$ as the set of vertices $v \in V(W)$, such that either $v$ is a branch vertex of $W$ whose first coordinate is in $\{6i-3,6i-2,\ldots,6i+2\}$, or such that $v$ is contained in a subdivision-path of $W$ between two branch vertices in $S_i$. Note that the sets $S_1,\ldots,S_k$ are pairwise vertex-disjoint. For every $i=1,\ldots,k$ we can now apply Lemma~\ref{lemma:technical} to $D$ and the vertex $w_i$ in $W$, and we find that $D[S_i \cup R_W^+[w_i]]$ contains a $k$-train. Since the sets $S_i \cup R_W^+[w_i], i=1,\ldots,k$ are pairwise disjoint, it follows that $D$ contains $k$ vertex-disjoint $k$-trains. 
The assertion now follows by applying Observation~\ref{obs:disjointktrains} to $D$.
\end{proof}
\begin{figure}[h]
\centering
\includegraphics[scale=0.86]{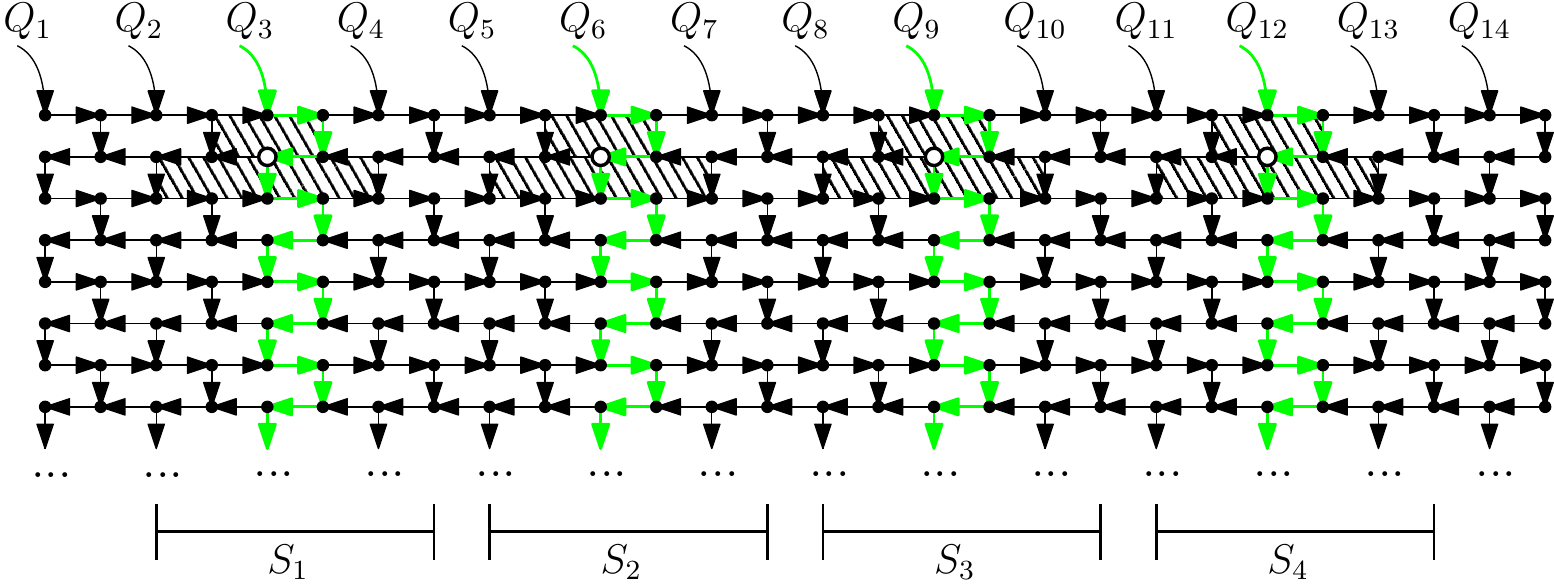}
\caption{Illustration of the placement of the vertices $w_1,\ldots,w_{4}$ in the wall $W_{14}$ in the proof of Lemma~\ref{lemma:strongcase}. The vertices $w_i$ are marked by big non-filled vertices, and the three incident bricks of every $w_i$ are shaded. The directed cycles $Q_{3i}, i=1,\ldots,4$ containing the vertices $w_i$ are marked fat and green.}\label{fig:bigwall}
\end{figure}

\begin{lemma}\label{lemma:nonstrongcase}
Let $D$ be a digraph, and let $W \subseteq D$ be a cylindrical wall of order $8$ which is weakly flat in $D$. Suppose further that $\delta^+(D),\delta^-(D) \ge 16$. Then $D$ contains $3$ vertex-disjoint directed cycles of distinct lengths.
\end{lemma}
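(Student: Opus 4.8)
The plan is to locate a forward $3$-train and a reverse-$3$-train inside two column-disjoint ``strips'' of $W$, to find a brick of $W$ that both trains avoid, and then to select one cycle from each train together with that brick so that the three chosen cycles have pairwise distinct lengths.

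Since $\delta^+(D)\ge 16=7\cdot 3-5$, I would apply Lemma~\ref{lemma:technical} with $k=3$ to a branch vertex $w^+$ of $W$ lying on the vertical cycle $Q_3$, with coordinates $(5,2)$: here $5=2\cdot 3-1$ is odd and $2$ is even, and the associated strip $S^+$ consists of the branch vertices in columns $\{3,4,5,6,7,8\}$ together with the subdivision paths between them. As $W$ has order $8$, these six columns avoid the two perimeter columns, so $S^+\cap\textbf{per}(W)=\emptyset$, and Lemma~\ref{lemma:technical} supplies a $3$-train $T^+\subseteq D[S^+\cup R_W^+[w^+]]$. Symmetrically, using $\delta^-(D)\ge 16$, I would apply Corollary~\ref{cor:sym} to a branch vertex $w^-$ on $Q_6$ with coordinates $(11,11)$ (first and second coordinate both odd), obtaining a reverse-$3$-train $T^-\subseteq D[S^-\cup R_W^-[w^-]]$ whose strip $S^-$ occupies columns $\{9,10,\dots,14\}$. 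The column ranges of $S^+$ and $S^-$ are disjoint, so $S^+\cap S^-=\emptyset$.

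I would then verify $V(T^+)\cap V(T^-)=\emptyset$: we have $S^+\cap S^-=\emptyset$; moreover $S^+,S^-\subseteq\textbf{int}(W)$ while $R_W^+[w^+],R_W^-[w^-]\subseteq V(D)\setminus\textbf{int}(W)$, so $S^+\cap R_W^-[w^-]=S^-\cap R_W^+[w^+]=\emptyset$; and $R_W^+[w^+]\cap R_W^-[w^-]=\emptyset$ by Lemma~\ref{lemma:intersections}(i), since $\text{dist}_W(w^+,w^-)\ge 2$. Thus the two trains are vertex-disjoint, and each contains directed cycles of three pairwise distinct lengths. For the third cycle I would use a brick of $W$ sitting in a region of the wall untouched by either train. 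Reading off the construction in the proof of Lemma~\ref{lemma:technical} (and its arc-reversed version used in Corollary~\ref{cor:sym}), the part of $T^+$ that lies in $W$ is contained in the three bricks incident to $w^+$ together with a bounded detour, all confined to the four rows $\{1,2,3,4\}=\{c_2-1,c_2,c_2+1,c_2+2\}$ with $c_2=2$; likewise the part of $T^-$ lying in $W$ is confined to the four rows $\{9,10,11,12\}=\{c_2-2,c_2-1,c_2,c_2+1\}$ with $c_2=11$ (the asymmetry stemming from the reversal). I would then pick a brick $B$ of $W$ all of whose branch vertices lie in interior columns and in rows $7$ and $8$ (for instance the brick on columns $5,6,7$ occupying rows $7,8$). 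Then $V(B)\subseteq\textbf{int}(W)$, so $V(B)\cap(R_W^+[w^+]\cup R_W^-[w^-])=\emptyset$, and $V(B)$ lies in rows used by neither train; hence the directed cycle $B$ is vertex-disjoint from $T^+\cup T^-$. Finally, writing $m_B:=|B|$, I would choose a cycle $C^+$ in $T^+$ with $|C^+|\neq m_B$ and then a cycle $C^-$ in $T^-$ with $|C^-|\notin\{|C^+|,m_B\}$, which is possible because each of $T^+,T^-$ offers three distinct cycle lengths; the cycles $C^+,C^-,B$ are three vertex-disjoint directed cycles of pairwise distinct lengths.

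The step I expect to be the main obstacle is the ``confinement'' claim, which is not part of the statement of Lemma~\ref{lemma:technical} and must be extracted from its proof: one has to check that the train constructed there meets $W$ only inside a band of four consecutive rows around $w$, and then confirm that an order-$8$ wall is simultaneously wide enough to host the two column-disjoint width-$6$ strips (which fit exactly in the $12$ interior columns) and tall enough to host a brick whose $2$-row band avoids the two $4$-row bands used by the trains (which fits comfortably in the $16$ rows). The numerology of $k=3$ and wall order $8$ is precisely what makes this tight but feasible.
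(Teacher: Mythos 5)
Your proposal follows the paper's route in the first half: both place a branch vertex at $(5,2)$ on $Q_3$ and apply Lemma~\ref{lemma:technical} with $k=3$ to get a $3$-train in $D[S^+\cup R_W^+[w^+]]$, place a second branch vertex in the disjoint width-$6$ strip $S^-$ (columns $9$--$14$) and apply Corollary~\ref{cor:sym} to get a reverse-$3$-train, and use Lemma~\ref{lemma:intersections}(i) with $\text{dist}_W\ge 2$ to make the reach sets disjoint. The divergence is entirely in how the third cycle is produced. The paper places $w_2$ at $(11,3)$ and simply takes the perimeter cycle $Q_1$ as the third cycle, after noting $V(Q_1)$ is disjoint from $S_1\cup R_W^+[w_1]$ and $S_2\cup R_W^-[w_2]$; no row-locality of the trains is needed. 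You place $w^-$ at $(11,11)$ and instead use an interior brick $B$ in rows $7$--$8$, whose disjointness from the two trains you justify by a ``row confinement'' claim.

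That confinement claim is a genuine gap, as you yourself flag. Lemma~\ref{lemma:technical} only asserts that the $3$-train lies inside $D[S\cup R_W^+[w]]$, and the strip $S$ is a \emph{full-height} slab spanning all $16$ rows of the wall; nothing in the statement restricts rows. To extract confinement you must reread the proof, and there the difficulty is that the trains built in cases $2$, $3$, $4$ and $6$ incorporate segments $Q_\ell[u_{10},w]$ or $Q_\ell[u_{14},w]$ of the vertical cycle $Q_\ell$. A segment of a cycle in a cylindrical wall need not be short: depending on the orientation of $Q_\ell$, it could wind around the cylinder through every row on its way from, say, $(c_1,3)$ to $(c_1,2)$. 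You would need to verify that the orientation of $Q_\ell$ makes these segments stay within rows $c_2-1,\dots,c_2+2$, and then redo the same check in the arc-reversed wall used inside the proof of Corollary~\ref{cor:sym}. Until that is pinned down, the brick $B$ is not guaranteed to avoid the trains. The paper sidesteps all of this by using the perimeter cycle $Q_1$, which is disjoint from the interior strips by definition and requires no inspection of where the trains sit inside $\textbf{int}(W)$; that is the cleaner and more robust choice here.
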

\begin{proof}
Let us place two distinct vertices $w_1, w_2$ in the interior of $W$ as follows: $w_1$ is the branch-vertex of $W$ with coordinates $(5,2)$, and $w_2$ is the branch vertex of $W$ with coordinates $(11,3)$. Then we have $\text{dist}_W(w_1,w_2)=3 \ge 2$, and by Lemma~\ref{lemma:intersect}, (i) we have $R_W^+[w_1] \cap R_W^-[w_2]=\emptyset$. Let the strips $S_1$, $S_2 \subseteq \textbf{int}(W)$ be defined as the sets consisting of the branch-vertices of $W$ with first coordinates in $\{3,4,\ldots,8\}$ respectively $\{9,10,\ldots,14\}$ as well as the vertices on the subdivision-paths of $W$ spanned between those vertices. It now follows from Lemma~\ref{lemma:technical} applied to $w_1$ with $k=3$, respectively from Corollary~\ref{cor:sym} applied to $w_2$ with $k=3$, that $D[S_1 \cup R_W^+[w_1]]$ contains a $3$-train and that $D[S_2 \cup R_W^-[w_2]]$ contains a reverse-$3$-train. Noting that $V(Q_1)$, $S_1 \cup R_W^+[w_1]$ and $S_2 \cup R_W^-[w_2]$ are pairwise disjoint, we conclude that $D$ contains the directed cycle $Q_1$, a $3$-train, and a reverse $3$-train as subdigraphs, pairwise vertex-disjoint. Since every $3$-train and every reverse $3$-train contains $3$ directed cycles of distinct lengths, it follows that $D$ contains $3$ vertex-disjoint directed cycles of distinct lengths.
\end{proof}
\subsection{Proofs of Theorem~\ref{thm:mainconn} and Theorem~\ref{thm:mainsem}}\label{sec:thm12}
We are now ready to give the proofs of Theorem~\ref{thm:mainconn} and Theorem~\ref{thm:mainsem}.
\begin{proof}[Proof of Theorem~\ref{thm:mainconn}]
Let $k \in \mathbb{N}$ and put $s(k):=\max\{d(3k+2,t),a(t)+7k-5\}$, where $t:=\frac{k^2+3k}{2}$ and $d(\cdot,\cdot), a(\cdot)$ are the functions from Theorem~\ref{thm:flatwallstrong}. 

Let $D$ be any given strongly $s(k)$-connected digraph. By Theorem~\ref{thm:flatwallstrong}, applied to $D$ with parameters $3k+2$ and $t$, we find that one of the following must be true.
\begin{enumerate}[label=(\roman*)]
\item $\dtw(D)<d(3k+2,t)$, or
\item $D$ contains $\bivec{K}_t$ as a butterfly-minor, or
\item there exists $X \subseteq V(D)$ with $|X| \le a(t)$ and a cylindrical wall $W \subseteq D-X$ of order $3k+2$ which is weakly flat in $D-X$.
\end{enumerate}
Case (i) is impossible, since we have $\dtw(D) \ge s(k) \ge d(3k+2,t)$ by Corollary~\ref{cor:highconnhighwidth}. If Case (ii) occurs, then we can apply Corollary~\ref{cor:kt} to $D$ and we find that $D$ contains $k$ vertex-disjoint directed cycles of pairwise distinct length, as required. Finally, if Case (iii) occurs, we know from $s(k) \ge a(t)+7k-5$ that $D-X$ is strongly $(7k-5)$-connected. This in particular implies that $D-X$ is strongly connected and $\delta^+(D) \ge 7k-5$. Since there exists a cylindrical wall $W \subseteq D-X$ of order $3k+2$ which is weakly flat in $D-X$, we can apply Lemma~\ref{lemma:strongcase} to $D-X$ and find that there exist $k$ vertex-disjoint directed cycles of pairwise distinct lengths in $D-X \subseteq D$, yielding the assertion also in this case. This concludes the proof.
\end{proof}
\begin{proof}[Proof of Theorem~\ref{thm:mainsem}]
Let $K:=\max\{2d(8,9)+3,a(9)+16\}$, where $d(\cdot,\cdot), a(\cdot)$ are the functions from Theorem~\ref{thm:flatwallstrong}. 

Let $D$ be any given digraph such that $\delta^+(D), \delta^-(D) \ge K$. By applying Theorem~\ref{thm:flatwallstrong} to $D$ with parameters $8$ and $9$, we find that one of the following must hold:
\begin{enumerate}[label=(\roman*)]
\item $\dtw(D) < d(8,9)$, or
\item $D$ contains $\bivec{K}_9$ as a butterfly-minor, or
\item There exists $X \subseteq V(D)$ with $|X| \le a(9)$ such that $D-X$ contains a wall $W$ of order $8$ which is weakly flat in $D-X$.
\end{enumerate}
If Case (i) occurs, then we have $\dtw(D) \le d(8,9)-1$. Since $\delta^+(D) \ge K \ge 2d(8,9)+3$ $=(d(8,9)+1)(3-1)+1>(d(8,9)+1)(3-1)$, we can apply Proposition~\ref{prop:boundedwidth} to $D$ with parameters $d:=d(8,9)-1$ and $k=3$ and find that indeed, $D$ contains $3$ vertex-disjoint directed cycles of distinct lengths in this case, proving the assertion in this case.
If Case (ii) occurs, then we can apply Corollary~\ref{cor:kt} to $D$ with $t=9, k=3$ and find that $D$ contains $3$ vertex-disjoint directed cycles of distinct lengths.
Finally, if (iii) occurs, then we have $\delta^+(D-X) \ge \delta^+(D)-|X| \ge a(9)+16-a(9)=16$ and $\delta^-(D-X) \ge \delta^+(D)-|X| \ge a(9)+16-a(9)=16$. Since $D-X$ contains a weakly flat wall $W$ of order $8$, it follows from Lemma~\ref{lemma:nonstrongcase} that $D-X \subseteq D$ contains $3$ vertex-disjoint directed cycles of distinct lengths also in this case. This concludes the proof.
\end{proof}
\section{Equicardinal Disjoint Directed Cycles}\label{sec:equicardinal}
In this section we give the proof of Proposition~\ref{prop:notwoofsamelength}, by constructing for every $k \in \mathbb{N}$ a strongly $k$-connected digraph containing no two arc-disjoint directed cycles of equal length.
\begin{proof}[Proof of Proposition~\ref{prop:notwoofsamelength}]
Let $k \in \mathbb{N}$. Let $N:=4^{k^2}$ and let $a(1),\ldots,a(k^2), b(1),\ldots,b(k^2) \in \mathbb{N}$ be defined as $a(\ell):=N+2^{\ell-1}, b(\ell):=N+2^{k^2+\ell-1}$, $1 \le \ell \le k^2$. Note that all possible subset-sums of $\{a(1),a(2),\ldots,a(k^2),b(1),b(2),\ldots,b(k^2)\}$ are pairwise distinct.

Let $D_k$ denote a digraph on $2Nk$ vertices whose vertex-set is partitioned into $2N$ disjoint parts $V_1,\ldots,V_{2N}$, each of size exactly $k$, and having the following arcs:

For every $2 \le \ell \le N$, $D_k$ contains all possible arcs of the form $(u,v), u \in V_\ell, v \in V_{\ell-1}$.

Label the vertices in $V_1$ and $V_N$ as $V_1=\{u_1,\ldots,u_k\}, V_N=\{w_1,\ldots,w_k\}$. For every $i \in \{1,\ldots,k\}$, $u_i$ has $k$ distinct out-arcs $e_{i,j}, j=1,\ldots,k$, where $e_{i,j}$ connects $u_i$ to a vertex chosen arbitrarily from $V_{a(k(i-1)+j)}$. Similarly, $v_i$ has $k$ distinct in-arcs $f_{i,j}, j=1,\ldots,k$, where $f_{i,j}$ connects a vertex chosen arbitrarily from $V_{2N-b(k(i-1)+j)+1}$ to $v_i$.

This concludes the description of $D_k$. Please note that all out-neighbors of the vertices in $V_1$ are contained in $V_{N+1}\cup \cdots \cup V_{2N}$ while all the in-neighbors of the vertices in $V_{2N}$ are contained in $V_1 \cup \cdots \cup V_{N}$. We next show that $D_k$ is strongly $k$-vertex-connected. To this end, let $S \subseteq V(D_k)$ with $|S|<k$ be arbitrary and let us prove that $D_k-S$ is strongly connected. Since $|S|<k$, we have $V_\ell \setminus S \neq \emptyset, 1 \le \ell \le 2N$, and hence in $D_k-S$ every vertex in $V_\ell\setminus S$ can reach every vertex in $V_{\ell'}\setminus S$ provided $1 \le \ell' <\ell \le 2N$. For strong connectivity of $D_k-S$ it now is sufficient to check that for every $i, j \in \{1,\ldots,k\}$ such that $u_i \in V_1 \setminus S, w_j \in V_{2N} \setminus S$, there exists a directed path $P$ in $D_k-S$ starting at $u_i$ and ending in $w_j$. Since $u_i$ and $w_j$ have out-degree respectively in-degree $k$ in $D_k$, there exist vertices $x, y \in V(D_k)\setminus S$ such that $(u_i,x),(y,w_j) \in A(D_k)$. Note that by construction, $x \in V_\ell \setminus S, y \in V_{\ell'}\setminus S$ for some $\ell \in \{N+1,\ldots,2N\}$, $\ell' \in \{1,\ldots,N\}$. Hence, from the above it follows that $x$ can reach $y$ in $D_k-S$, and hence also $u_i$ can reach $w_j$ in $D_k-S$. This shows that $D_k-S$ is indeed strongly connected for every $S \subseteq V(D_k), |S|<k$, and hence that $D_k$ is strongly $k$-vertex-connected.

Let us now show that $D$ does not contain two arc-disjoint directed cycles of equal lengths. To this end, let us call $E:=\{e_{i,j},f_{i,j}|1 \le i,j \le k\}$ the set of \emph{forward arcs} of $D_k$ and define the \emph{lengths} of the forward arcs as $L(e_{i,j}):=a(k(i-1)+j), L(f_{i,j}):=b(k(i-1)+j), 1 \le i,j \le k$. Let now $C$ be a directed cycle in $D_k$ using the forward arcs $e_1,\ldots,e_r \in E$ in this cyclical order, and for $1 \le s \le r$ let $1 \le h(s) < t(s) \le 2N$ be such that $e_s$ starts in $V_{t(s)}$ and ends in $V_{h(s)}$. Since all non-forward arcs on $C$ connect a vertex of $V_\ell$ to $V_{\ell-1}$ for some $2 \le \ell \le 2N$, $C-\{e_1,\ldots,e_r\}$ decomposes into directed paths of lengths $h(1)-t(2), h(2)-t(1), h(3)-t(2), \ldots, h(r-1)-t(r),h(r)-t(1)$. It follows that
$$|C|=r+(h(1)-t(2))+\ldots+(h(r-1)-t(r))+(h(r)-t(1))=\sum_{s=1}^{r}{(h(s)-t(s)+1)}.$$ Noting that $h(s)-t(s)+1=L(e_s)$ for every $1 \le s \le k$, we obtain that the length of any directed cycle in $D_k$ equals the sum of the lengths of the forward arcs it is using. The multi-set of the edge lengths of forward-arcs equals $\{a(1),\ldots,a(k^2),b(1),\ldots,b(k^2)\}$. Since the subset-sums of this set are pairwise distinct, two directed cycles in $D_k$ have the same length iff they use the same forward-arcs. This concludes the proof.
\end{proof}
\section{Conclusion}\label{sec:conc}
In this paper, we have studied the existence of vertex-disjoint directed cycles with length constraints in digraphs of large connectivity. We have found that while we are guaranteed to find many disjoint cycles of different lengths in such digraphs, we cannot even be sure to find two arc-disjoint directed cycles of equal lengths. It would be interesting to complete this picture by understanding whether further length constraints, such as parities, can be enforced on the disjoint cycles. Clearly, bipartite digraphs of large connectivity show that we cannot expect the containment of odd dicycles. As mentioned in the introduction, a result by Thomassen~\cite{thom85} states that the existence of an even directed cycle cannot be forced by large minimum out-degree and in-degree. In contrast, he showed that every strongly $3$-connected digraph contains an even directed cycle~\cite{thom92}. We propose the following strengthening of Thomassen's result.
\begin{conjecture}
For every $k \in \mathbb{N}$, there exists an integer $s(k) \in \mathbb{N}$ such that every strongly $s(k)$-connected digraph contains $k$ vertex-disjoint directed cycles of even lengths.
\end{conjecture}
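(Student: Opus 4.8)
The plan is to follow the proof of Theorem~\ref{thm:mainconn} essentially verbatim, replacing ``$k$ vertex-disjoint directed cycles of pairwise distinct lengths'' by ``$k$ vertex-disjoint directed cycles of even length'' throughout. One applies the Directed Flat Wall Theorem (Theorem~\ref{thm:flatwallstrong}) to a strongly $s(k)$-connected digraph $D$, with the complete-minor parameter $t$ set to $3k$ and with the wall-order parameter and $s(k)$ itself chosen large enough that (a) case (i), $\dtw(D)<d(\cdot,\cdot)$, cannot occur — it is ruled out by $\dtw(D)\ge s(k)$ from Corollary~\ref{cor:highconnhighwidth}, exactly as in the proof of Theorem~\ref{thm:mainconn} — and (b) after deleting the set $X$ of case (iii), the digraph $D-X$ still has the strong connectivity, minimum out-/in-degree and wall order demanded by the case-(iii) argument below. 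It then remains to treat case (ii), a large complete butterfly minor, and case (iii), a large weakly flat wall, both now aiming for even directed cycles.

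For case (ii) I would prove the following even-weight strengthening of Lemma~\ref{lemma:arcweights}: if $D$ contains $\bivec{K}_{3k}$ as a butterfly minor and $w\colon A(D)\to\mathbb{Z}_{\ge 1}$ is \emph{any} integral arc-weighting, then $D$ contains $k$ pairwise vertex-disjoint directed cycles whose total $w$-weights are all even. The induction on $v(D)+a(D)$ and, in particular, the un-contraction step are identical to those in the proof of Lemma~\ref{lemma:arcweights}; the only point to note is that the re-weighting $w_2((x,v)):=w((x,u))+w((u,v))$ preserves the \emph{parity} of $w$-weights when an arc is un-contracted. The base case $D=\bivec{K}_{3k}$ is the new ingredient: partition $V(D)$ into $k$ triples, each inducing a copy of $\bivec{K}_3$, and observe that in $\bivec{K}_3$ with any arc-weighting the sum of the $w$-weights of the three digons equals the sum of the $w$-weights of the two triangles (both equal the total $w$-weight of all six arcs), so these five cycle-weights cannot all be odd; picking one even-weight cycle in each triple gives the required $k$ disjoint cycles. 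Specialising to $w\equiv 1$ yields $k$ vertex-disjoint even directed cycles, the analogue of Corollary~\ref{cor:kt} needed here.

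Case (iii) carries the main obstacle, and a genuinely new idea is required: a weakly flat wall on its own contains \emph{no} even directed cycle — by the construction in Remark~\ref{rem:wallallsamelength} all of its directed cycles can be made to have the same odd length — so every even cycle we produce must exploit the interaction between the wall and the (highly connected, large-out-degree) rest of $D$, exactly as the $k$-trains in Lemma~\ref{lemma:strongcase} did for distinct lengths. The target is an analogue of Lemma~\ref{lemma:strongcase}: build $k$ pairwise vertex-disjoint substructures, each contained in a thin strip of the wall together with a reachability set $R_W^+[\cdot]$ as in Lemma~\ref{lemma:technical}, and each forced to contain an even cycle. A natural candidate for such a substructure is a subdivision of $\bivec{K}_3$: by the same parity identity as above, \emph{every} subdivision of $\bivec{K}_3$ contains an even directed cycle (if all three ``digon'' subdivision-dipaths had odd length, one of the two ``triangle'' directed cycles would be even). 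One would then adapt the case analysis of Lemma~\ref{lemma:technical} so that, from the high-out-degree vertex $x$ and its interior out-neighbours $v_1,\dots,v_{6k-5}$, one wires three of the $v_i$ pairwise bidirectionally — through the strip and through the returning dipath $P$, possibly using several disjoint escape routes from the strip — into a $\bivec{K}_3$-subdivision.

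The step I expect to be the real difficulty is the second alternative of Lemma~\ref{lemma:in or out}, where $D[R_W^+[w]]$ merely has large minimum out-degree: there one cannot mimic Observation~\ref{obs:existsktrain}, because large minimum out-degree alone does \emph{not} force an even directed cycle~\cite{thom85}, so a fresh argument is needed — for instance re-entering the flat-wall dichotomy on $D[R_W^+[w]]$, or, more promisingly, invoking the global strong $s(k)$-connectivity directly via Thomassen's theorem that strongly $3$-connected digraphs contain an even directed cycle~\cite{thom92} after further localising inside a highly connected part. Finally, with the constants governing the wall order and the required minimum out-/in-degree of $D-X$ fixed by the case-(iii) argument, cases (ii) and (iii) are assembled into the appropriate choice of $s(k)$ exactly as in the proof of Theorem~\ref{thm:mainconn}.
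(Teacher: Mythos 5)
This statement is not proved in the paper: it appears in Section~\ref{sec:conc} explicitly as an open conjecture, so there is no proof of the author's to compare yours against, and your proposal has to stand on its own. The parts you work out in detail are correct and worthwhile: in the even-weight analogue of Lemma~\ref{lemma:arcweights}, the un-contraction step preserves cycle weights exactly and hence their parities, and in the base case the identity that the three digons of an arc-weighted $\bivec{K}_3$ and its two triangles both have total weight equal to the weight of all six arcs shows that these five cycles cannot all have odd weight; so each of $k$ disjoint triples in $\bivec{K}_{3k}$ yields an even cycle, settling the complete-minor case. Case~(i) is excluded via Corollary~\ref{cor:highconnhighwidth} exactly as in Theorem~\ref{thm:mainconn}.

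The genuine gap is case~(iii), and you have located it yourself but not closed it. The paper's entire flat-wall machinery is engineered to produce $k$-trains, which are useful only because nested chords automatically give distinct lengths; a $k$-train carries no parity information, and by Remark~\ref{rem:wallallsamelength} the wall itself may have all of its directed cycles of one fixed, possibly odd, length. Your substitute --- wiring three interior vertices into a subdivision of $\bivec{K}_3$ --- is only a sketch: the sets $R_W^+[w]$ certify reachability \emph{out of} the wall in one direction only, so realising all six directed connections of a $\bivec{K}_3$ among three interior vertices, vertex-disjointly for $k$ separate strips, is precisely the hard construction and is nowhere carried out. Worse, in the second alternative of Lemma~\ref{lemma:in or out} you are left with a digraph $D[R_W^+[w]]$ of large minimum out-degree, and by Thomassen's construction~\cite{thom85} such a digraph need contain no even directed cycle at all; the fallbacks you mention do not repair this, since Thomassen's theorem~\cite{thom92} produces a single even cycle somewhere in a strongly $3$-connected digraph with no control over its location or disjointness from the other $k-1$ structures, $D[R_W^+[w]]$ inherits no connectivity from $D$, and ``re-entering the flat-wall dichotomy'' on $D[R_W^+[w]]$ has no termination guarantee. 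As it stands the proposal reduces the conjecture to its hardest case rather than proving it.
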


\end{document}